\definecolor{darkgreen}{rgb}{0.0, 0.5, 0.0}
\newcommand{\eps}{\varepsilon}
\newcommand{\loc}{\mathrm{loc}}
\newcommand{\T}{\mathcal{T}}
\newcommand{\TT}{\mathbb{T}}
\newcommand{\N}{\mathbb N}
\newcommand{\Z}{\mathbb Z}
\newcommand{\E}{\mathbb E}
\renewcommand{\P}{\mathbb P}
\newcommand{\expl}{\mathcal{E}}
\newcommand{\Tii}{\mathbb{T}_{\kappa}^{II}}
\newcommand{\diam}{\mathrm{diam}}
\theoremstyle{definition}
\newtheorem{thm}{Theorem}
\newtheorem{defn}{Definition}
\newtheorem{rem}[defn]{Remark}
\newtheorem{prop}[defn]{Proposition}
\newtheorem{corr}[defn]{Corollary}
\newtheorem{lem}[defn]{Lemma}
\tikzstyle{every node}=[circle, draw, fill=black!50, inner sep=0pt, minimum width=4pt]
\tikzstyle{white}=[circle, draw, fill=black!0, inner sep=0pt, minimum width=4pt]
\tikzstyle{bigwhite}=[circle, draw, fill=black!0, inner sep=0pt, minimum width=10pt]
\tikzstyle{dual}=[circle, draw=blue, fill=black!0, inner sep=0pt, minimum width=4pt]
\tikzstyle{fat}=[circle, draw, fill=red!50, inner sep=0pt, minimum width=8pt]
\tikzstyle{fat_bis}=[circle, draw, fill=blue!50, inner sep=0pt, minimum width=8pt]
\tikzstyle{fat_ter}=[circle, draw, fill=green!50, inner sep=0pt, minimum width=8pt]
\tikzstyle{rouge}=[circle, draw, fill=red, inner sep=0pt, minimum width=7pt]
\tikzstyle{bleu}=[circle, draw, fill=blue, inner sep=0pt, minimum width=7pt]
\tikzstyle{petitrouge}=[circle, draw, fill=red, inner sep=0pt, minimum width=4pt]
\tikzstyle{petitbleu}=[circle, draw, fill=blue, inner sep=0pt, minimum width=4pt]
\tikzstyle{texte}=[draw=none, fill=none]
\title{\bf{Local limits of uniform triangulations in high genus}}
\author{Thomas \bsc{Budzinski}\footnote{University of British Columbia, \url{budzinski@math.ubc.ca}} \, and Baptiste \bsc{Louf}\footnote{IRIF, Université Paris Diderot, \url{blouf@irif.fr}}}
\begin{document}

\maketitle

\begin{abstract}
We prove a conjecture of Benjamini and Curien stating that the local limits of uniform random triangulations whose genus is proportional to the number of faces are the Planar Stochastic Hyperbolic Triangulations (PSHT) defined in \cite{CurPSHIT}. The proof relies on a combinatorial argument and the Goulden--Jackson recurrence relation to obtain tightness, and probabilistic arguments showing the uniqueness of the limit. As a consequence, we obtain asymptotics up to subexponential factors on the number of triangulations when both the size and the genus go to infinity.

As a part of our proof, we also obtain the following result of independent interest: if a random triangulation of the plane $T$ is weakly Markovian in the sense that the probability to observe a finite triangulation $t$ around the root only depends on the perimeter and volume of $t$, then $T$ is a mixture of PSHT.
\end{abstract}

\section*{Introduction}

\paragraph{Counting maps on surfaces.}
The enumeration of maps or triangulations on surfaces, going back to Tutte \cite{Tutte63} in the planar case, has proved to be connected with many different domains of mathematics and theoretical physics. Such links include the "double scaling limits" considered by physicists in the 90s, the Witten conjecture about the geometry of moduli spaces (\cite{Witten91,Kont92}), the topological recursion (see e.g. \cite{Eynard16}), representations of the symetric group and solutions of several integrable hierarchies such as the KdV, the KP and the $2$-Toda hierarchies \cite{MJD00,Okounkov00,GJ08}. In particular, the link with the KP hierarchy has been used by Goulden and Jackson in \cite{GJ08} to obtain double recurrence formulas on the number of triangulations with size $n$ and genus $g$ (see also \cite{CC15} for similar relations on quadrangulations). However, asymptotics for these numbers are only known when $n \to +\infty$ for $g$ fixed \cite{BC86}, and not when both $n,g \to +\infty$.

\paragraph{Geometric properties of random maps.}
Alongside these enumerative questions, a probabilistic approach has been the object of a lot of study in the last fifteen years: the goal is then to study the geometric properties of a map picked uniformly in a certain class when the genus $g$ and/or the size $n$ become large. In particular, two extreme cases are now pretty well understood.

The first one is the planar case $g=0$, which is understood both through local and scaling limits.
Many natural models of finite random planar maps have been proved to converge locally as their size goes to infinity towards infinite random planar maps such as the UIPT \cite{AS03} (see also \cite{St18} for the type-I UIPT), the UIPQ \cite{Kri05, CD06} or infinite Boltzmann planar maps \cite{Bud15}. On the other hand, the Brownian map \cite{LG11, Mie11} (see also~\cite{CLGplane} for a noncompact version) is now known to be the scaling limit of a wide class of models of random planar maps, see e.g. \cite{AA13, Mar16, ABA19}. The Brownian map is also linked to the Liouville quantum gravity approach \cite{DKRV16}.
Some of these continuous models also have analogues in higher genus such as Brownian surfaces \cite{Bet14} or Liouville quantum gravity on complex tori \cite{DRV16}, but the behaviour of these models when the genus goes to infinity is still poorly understood.

The other extreme case which is well understood is the case where the genus is unconstrained and the maps simply consist of uniform random gluings of polygons. Here the number of vertices tends to be very small and their degrees go to infinity \cite{Gam06, CP16, BCP19}, so no proper local limit exists.

\paragraph{Random maps with genus proportional to the size.}
However, much less is known about the case of maps of higher genus, and in particular when the genus is proportional to the size. The only known results so far are the identification of the local limit of uniform unicellular maps (i.e. maps with one face) \cite{ACCR13}, which is a supercritical random tree, and the calculation of their diameter \cite{Ray13a}.
One of the reasons why it is more difficult to obtain results in high genus is the lack of explicit enumeration results, which play a key role in the planar case.
The goal of this work is to identify the local limit of uniform triangulations whose genus is proportional to the size.

Before describing the limiting objects that appear, let us first explain how the local limit is affected by the genus. By the Euler formula, a triangulation with $2n$ faces and genus $g$ has $3n$ edges and $n+2-2g$ vertices, which implies $g \leq \frac{n+1}{2}$. Hence, if $\frac{g}{n} \to \theta \in \left[ 0, \frac{1}{2} \right]$, then the average degree of the vertices goes to $\frac{6}{1-2\theta}$. In particular, if $0<\theta<\frac{1}{2}$, this mean degree lies strictly between $6$ and $+\infty$. Therefore, it is natural to expect limit objects to be hyperbolic triangulations of the plane\footnote{As a deterministic example, the $d$-regular triangulations of the plane for $d>6$ are hyperbolic.}. This expected relation between high genus and hyperbolic objects also echoes the construction of higher genus surfaces from the hyperbolic plane in complex geometry.

\paragraph{Planar Stochastic Hyperbolic Triangulations.}
This has motivated the introduction of random hyperbolic triangulations, first in the half-planar case by Angel and Ray \cite{AR13}, and then in the full-plane case by Curien \cite{CurPSHIT}. More precisely, Curien built a one-parameter family $(\TT_{\lambda})_{0 < \lambda \leq \lambda_c}$ of random triangulations of the plane\footnote{To be exact, the triangulations defined in \cite{CurPSHIT} are type-II triangulations, i.e. triangulations with no loop joining a vertex to itself. The type-I (with loops) analogue, which will be the one considered in this work, was defined in \cite{B16}.}, where $\lambda_c=\frac{1}{12\sqrt{3}}$, and characterized them as the only random triangulations of the plane exhibiting a natural spatial Markov property. For any finite triangulation $t$ with a hole of perimeter $p$ and $v$ vertices in total, we have
\[ \P \left( t \subset \TT_{\lambda} \right)=C_p(\lambda) \lambda^v,\]
where $C_p(\lambda)$ are explicit functions of $\lambda$ and, by $t \subset T$, we mean that $T$ can be obtained by filling the hole of $t$ with an infinite triangulation. Moreover, $\TT_{\lambda_c}$ is the UIPT, whereas for $\lambda<\lambda_c$, the map $\TT_{\lambda}$ has hyperbolicity properties such as exponential volume growth \cite{Ray13, CurPSHIT}, positive speed of the simple random walk \cite{CurPSHIT, ANR14} or the existence of a lot of infinite geodesics escaping quickly away from each other \cite{B18}.

\paragraph{The PSHT as local limits.}
For any $g \geq 0$ and $n \geq 2g-1$, we denote by $\T(n,g)$ the set of rooted type-I triangulations of genus $g$ with $2n$ faces. By \emph{rooted}, we mean that the triangulation is equipped with a distinguished oriented edge called the \emph{root}. Let also $T_{n,g}$ be a uniform triangulation of $\T(n,g)$. We also recall that a sequence of rooted triangulations $(t_n)$ converges locally to a triangulation $T$ if for any $r \geq 0$, the ball of radius $r$ around the root in $t_n$, seen as a map, converges to the ball of radius $r$ in $T$. We refer to Section 1 for more precise definitions. For any $\lambda \in (0,\lambda_c]$, let $h \in \left( 0,\frac{1}{4} \right]$ be such that $\lambda=\frac{h}{(1+8h)^{3/2}}$, and let
\begin{equation}
d(\lambda)=\frac{  h \log \frac { 1 + \sqrt { 1 - 4 h } } { 1 - \sqrt { 1 - 4 h } } } { ( 1 + 8 h ) \sqrt { 1 - 4 h } }.
\end{equation}
It can be checked that the function $d(\lambda)$ is increasing with $d(\lambda_c)=\frac{1}{6}$ and $\lim_{\lambda \to 0} d(\lambda)=0$ (see the end of Section 4.3 for a quick proof). Then our main result is the following.
\begin{thm}\label{main_thm}
Let $(g_n)$ be a sequence such that $\frac{g_n}{n} \to \theta$ with $\theta \in \left[ 0,\frac{1}{2} \right)$. Then we have
\[ T_{n,g_n} \xrightarrow[n \to +\infty]{(d)} \TT_{\lambda}\]
for the local topology, where $\lambda$ is the unique solution to the equation
\begin{equation}\label{eqn_lambda_vs_theta}
d(\lambda)=\frac{1-2\theta}{6}.
\end{equation}
\end{thm}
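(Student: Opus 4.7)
The plan is to first establish tightness of the sequence $(T_{n,g_n})$ in the local topology, and then to identify every subsequential limit. I will argue that every such limit $T_\infty$ must be \emph{weakly Markovian} in the sense of the abstract, so that by the classification theorem mentioned there it is a mixture $\TT_\Lambda$ of PSHT for some random parameter $\Lambda \in (0,\lambda_c]$. A final concentration argument pinning down $\Lambda$ will force $\Lambda=\lambda$ deterministically, where $\lambda$ solves $d(\lambda)=(1-2\theta)/6$.

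\textbf{Tightness via Goulden--Jackson.} To prove tightness in the local topology it suffices, for each fixed $r \ge 1$, to control the tail of the volume of the ball $B_r(T_{n,g_n})$. A standard surgery argument expresses
\[
\P\bigl(B_r(T_{n,g_n}) = t\bigr) = \frac{|\T(n - v(t), g_n)|}{|\T(n, g_n)|} \cdot N(t),
\]
where $v(t)$ is the inner volume of $t$ and $N(t)$ is an explicit combinatorial weight depending only on the hole perimeter of $t$; here one uses that removing a simply connected ball does not change the genus. The problem thus reduces to uniform upper bounds on the ratios $|\T(n-k, g_n)|/|\T(n, g_n)|$, which I would extract from a careful asymptotic analysis of the Goulden--Jackson double recurrence: controlling how $|\T(n,g_n)|$ evolves as $n$ decreases with $g_n$ held fixed should yield geometric decay of these ratios in $k$, which is enough for tightness.

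\textbf{Weakly Markovian subsequential limits.} Along any subsequence along which $T_{n,g_n}$ converges in law to some $T_\infty$, the limit is automatically weakly Markovian. Indeed, the surgery identity above shows that $\P(t \subset T_{n,g_n})$ depends on $t$ only through its hole perimeter and inner volume (and on $n,g_n$), hence the limit probability $\P(t \subset T_\infty)$ depends on $t$ only through these two statistics. The classification theorem then provides a random $\Lambda \in (0,\lambda_c]$ such that $T_\infty$ has the same law as $\TT_\Lambda$.

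\textbf{Pinning down the parameter.} The last and, I expect, hardest step is to show that $\Lambda$ is almost surely deterministic and equal to $\lambda$. A first constraint is a moment identity: since the root is a uniform oriented edge, one has
\[
\E\!\left[\frac{1}{\deg(\mathrm{root}, T_{n,g_n})}\right] = \frac{V(T_{n,g_n})}{2E(T_{n,g_n})} = \frac{n+2-2g_n}{6n} \xrightarrow[n\to\infty]{} \frac{1-2\theta}{6},
\]
while under $\TT_\mu$ the same quantity equals $d(\mu)$ (an identity that can be checked from the explicit formulas for $C_p(\mu)$, and is also consistent with $d(\lambda_c)=\frac{1}{6}$ corresponding to the UIPT). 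Passing to the limit therefore gives $\E[d(\Lambda)]=(1-2\theta)/6$. To upgrade this single constraint to $\Lambda=\lambda$ almost surely, I would use the Goulden--Jackson recurrence to derive precise enumerative asymptotics showing that the ratios $|\T(n-k, g_n)|/|\T(n, g_n)|$ converge to explicit powers of $\lambda$ for every $k$; this forces the whole family of moments $\E[\Lambda^k C_p(\Lambda)]$ to equal $\lambda^k C_p(\lambda)$, and strict monotonicity in $\mu$ of these quantities then pins the law of $\Lambda$ to the Dirac mass at $\lambda$. This is the step where most of the technical work is concentrated, and where the interplay between the combinatorial Goulden--Jackson input and the probabilistic rigidity of the PSHT family is essential.
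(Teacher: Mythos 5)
Your overall architecture matches the paper's (tightness, then weak Markovianity via classification into mixtures of PSHT, then identification of the parameter), but in two critical places you lean on extracting precise asymptotics from the Goulden--Jackson recurrence, and the authors state explicitly that they tried this and failed; this is the central obstacle the paper was designed to circumvent, so those steps are genuine gaps rather than details to fill in.

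\textbf{Tightness.} You propose to control $|\T(n-k,g_n)|/|\T(n,g_n)|$ by ``careful asymptotic analysis'' of Goulden--Jackson, and your surgery identity assumes that removing $B_r$ leaves one complementary piece of the same genus. In high genus neither assumption is safe: the ball can have several holes, the complement can be disconnected and carry genus, and nobody knows how to extract sharp asymptotics from Goulden--Jackson when $g/n\to\theta>0$. The paper sidesteps the asymptotics entirely with a \emph{bounded ratio lemma}: $\tau(n,g)/\tau(n-1,g)\le C_\eps$ uniformly over $g/n\le 1/2-\eps$, proved by an explicit almost-injection (flip a short dual path, contract an edge to merge two low-degree vertices). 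It then works with the dual local topology, in which tightness is automatic, and shows subsequential limits are planar, one-ended, and have finite vertex degrees. Your sketch omits planarity and one-endedness altogether, yet both are needed before the weak-Markov classification (which is stated for triangulations of the plane) can even be invoked.

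\textbf{Identification of the parameter.} Your observation $\E[d(\Lambda)]=(1-2\theta)/6$ from the exact mean inverse root degree is correct and is indeed used in the paper, but it is a single moment constraint and does not determine the law of $\Lambda$. You then propose to obtain the remaining constraints by showing the ratios $|\T(n-k,g_n)|/|\T(n,g_n)|$ converge to $\lambda^k$ via Goulden--Jackson --- again the step the authors could not carry out, and which in the paper is a \emph{consequence} of Theorem~1 (Lemma~\ref{prop_rapport_voisins}), not an input to it. The missing idea is the \emph{two holes argument}: sample two independent uniform root edges $e^1_n,e^2_n$, swap fixed-perimeter neighbourhoods of the two roots (an involution on bi-rooted triangulations, hence distribution-preserving), and use the a.s.\ peeling limit $P^\lambda(k)/V^\lambda(k)\to(1-4h)$ to read off the local parameter. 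This forces $\Lambda_1=\Lambda_2$ a.s., i.e.\ $\Lambda$ is a function of the unrooted map only, and together with the mean-inverse-degree identity this pins $d(\Lambda)=(1-2\theta)/6$ a.s., hence $\Lambda$ deterministic. Without this surgery argument (or some substitute showing $\Lambda$ is a.s.\ constant), your proof does not close.
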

We highlight that we only prove this theorem for type-I triangulations.
This result was conjectured by Benjamini and Curien \cite{CurPSHIT} (in the type-II case) without an explicit formula for $d(\lambda)$, and the formula for $d(\lambda)$ was first conjectured in \cite[Appendix B]{B18these}. The reason why the formula~\eqref{eqn_lambda_vs_theta} appears is that $d(\lambda)$ is the expected inverse of the root degree in $\TT_{\lambda}$, while the corresponding quantity in $T_{n,g_n}$ is asymptotically $\frac{1-2\theta}{6}$ by the Euler formula.
While it may seem counter-intuitive that high genus objects yield planar maps in the local limit, this has already been proved for other models such as random regular graphs or unicellular maps \cite{ACCR13}. Note that the case $\theta=0$ corresponds to $\lambda=\lambda_c$, which proves that if $g_n=o(n)$, then $T_{n,g_n}$ converges to the UIPT, which also seems to be a new result, even for $g_n$ constant. On the other hand, when $\theta \to \frac{1}{2}$, we have $\lambda \to 0$, so all the range $(0,\lambda_c]$ is covered. Since the object $\TT_0$ is not well defined (it corresponds to a "triangulation" where the vertex degrees are infinite), we expect that if $\theta=\frac{1}{2}$, the sequence $(T_{n,g_n})$ is not tight for the local topology.

\paragraph{Strategy of the proof.}
The most natural idea to prove Theorem \ref{main_thm} would be to obtain precise asymptotics for the numbers $\tau(n,g)=|\T(n,g)|$ and to adapt the ideas of \cite{AS03}. In theory, these numbers are entirely characterized by the Goulden--Jackson recurrence equation \cite{GJ08}. However, this seems very difficult without any a priori estimate on the $\tau(n,g)$ and all our efforts to extract asymptotics when $\frac{g}{n} \to \theta>0$ from these relations have failed. Therefore, our proof relies on more probabilistic considerations. It is however interesting to note that our probabilistic arguments allow in the end to obtain combinatorial asymptotics (Theorem \ref{thm_asympto}).

The first part of the proof consists of a tightness result: we prove that $(T_{n,g_n})$ is tight for the local topology as long as $\frac{g_n}{n}$ stays bounded away from $\frac{1}{2}$. A key tool in the proof is the \emph{bounded ratio lemma} (Lemma \ref{ratio_lem}), which states that the ratio $\frac{\tau(n+1,g)}{\tau(n,g)}$ is bounded as long as $\frac{g}{n}$ stays bounded away from $\frac{1}{2}$. This is essentially enough to adapt the argument of Angel and Schramm \cite{AS03} for the tightness of $T_{n,0}$. Along the way, we also show that any subsequential limit is a.s. planar and one-ended. The Goulden--Jackson formula also plays an important role in the proof.

The next step is to notice that any subsequential limit $T$ satisfies a weak Markov property: if $t$ is a finite triangulation with a hole of perimeter $p$ and $v$ vertices in total, then $\P \left( t \subset T \right)$ only depends on $p$ and $v$. From here, we deduce that $T$ must be a mixture of PSHT, i.e. a PSHT with a random parameter $\Lambda$.

Finally, what is left to prove is that $\Lambda$ is deterministic, i.e. it does not depend on $T_{n,g_n}$. By a surgery argument on finite triangulations which we call the \emph{two holes argument}, we first show that if $T_{n,g_n}$ is fixed, then $\Lambda$ does not depend on the choice of the root. We conclude by using the fact that the average inverse degree of the root in $T_{n,g_n}$ is asymptotically $\frac{1-2\theta}{6}$.

\paragraph{Weakly Markovian triangulations.}
Since one of the steps of the proof is a result of independent interest, let us highlight it right now. We call a random triangulation of the plane $T$ \emph{weakly Markovian} if for any finite triangulation $t$ with a hole of perimeter $p$ and $v$ vertices in total, the probability $\P \left( t \subset T \right)$ only depends on $p$ and $v$. This is strictly weaker than the spatial Markov property considered in \cite{CurPSHIT} to define the PSHT, since any mixture of PSHT is weakly Markovian. The result we prove is the following.

\begin{thm}\label{thm_weak_markov}
Any weakly Markovian triangulation of the plane is a mixture of PSHT.
\end{thm}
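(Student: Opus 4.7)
The theorem reduces to the following analytic statement: for any function $a:(p,v) \mapsto \P(t \subset T)$ arising from a weakly Markovian $T$, produce a probability measure $\mu$ on $(0, \lambda_c]$ with
\[ a(p,v) = \int_0^{\lambda_c} C_p(\lambda) \lambda^v \, d\mu(\lambda). \]
Since the law of any random triangulation of the plane is determined by the family $\{\P(t \subset T)\}_t$ (via inclusion--exclusion on cylinder events), this integral representation immediately identifies $T$ with the mixture $\int \TT_\lambda \, d\mu(\lambda)$.

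My strategy is a Hausdorff moment argument in the variable $v$. For fixed $p$, I would show that the sequence $v \mapsto a(p,v)/(C_p(\lambda_c) \lambda_c^v)$ is completely monotone and bounded, so that by Hausdorff's moment theorem it admits an integral representation $\int_0^1 x^v \, d\nu_p(x)$; undoing the normalization yields $a(p,v) = \int_0^{\lambda_c} C_p(\lambda) \lambda^v \, d\mu_p(\lambda)$. The key input is a combinatorial interpretation of the higher-order finite differences: if $t_v \subset t_{v+1} \subset \cdots \subset t_{v+k}$ is a nested chain of finite triangulations, all of perimeter $p$, each obtained from the previous by a block of peeling steps that creates exactly one new interior vertex while preserving the perimeter, then inclusion--exclusion combined with the weak Markov property identifies $(-1)^k \Delta^k a(p,v)$ with the probability of the explicit event ``$t_v \subset T$ but none of the specified one-vertex extensions lies in $T$'', which is manifestly non-negative. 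Next, I would show that $\mu_p$ is independent of $p$: the sum rule $a(p,v) = \sum a(p',v')$ over all one-face peelings from state $(p,v)$ also holds identically for each $\TT_\lambda$ (it is the normalization of the corresponding one-step transition kernel). Substituting the previous representations and invoking uniqueness in the Hausdorff moment problem forces $\mu_p = \mu_q$ for all $p, q$. Setting $\mu := \mu_p$ and checking that $\mu(\{0\}) = 0$ (which follows from local finiteness of $T$) completes the proof.

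The main obstacle is Step 1: the combinatorial inclusion--exclusion behind complete monotonicity requires a careful choice of blocks of peeling steps that preserve the perimeter while adding exactly one vertex, together with some bookkeeping across different values of $k$. A robust alternative, which sidesteps Hausdorff altogether, is to work directly with the non-negative martingales $M_k^\lambda := C_{P_k}(\lambda) \lambda^{V_k} / a(P_k, V_k)$ along a peeling exploration of $T$; the martingale property follows from the normalization $\sum C_{p'}(\lambda) \lambda^{v'-v} = C_p(\lambda)$ of $\TT_\lambda$'s one-step transitions, and the fact that under the weakly Markovian $T$ the transition from $(P_k,V_k)$ to $(P_{k+1},V_{k+1})$ has probability $a(P_{k+1},V_{k+1})/a(P_k,V_k)$. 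Their almost-sure limits then serve as Radon--Nikodym derivatives of $\TT_\lambda$ with respect to $T$ on the tail $\sigma$-algebra, from which one extracts a random parameter $\Lambda \in (0,\lambda_c]$ and the desired disintegration $T \mid \Lambda \sim \TT_\Lambda$.
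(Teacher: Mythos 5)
Your overall frame is the same as the paper's: reduce to a one-parameter Hausdorff moment problem and then show the moment measure is concentrated on $(0,\lambda_c]$. The paper also sets up the ``peeling equation''
\[
a^p_v \;=\; a^{p+1}_{v+1}\;+\;2\sum_{i=0}^{p-1}\sum_{j\ge 0}|\mathcal{T}_{i+1}(j)|\,a^{p-i}_{v+j},
\]
solves the moment problem for $p=1$ only, and then \emph{uses this identity as a recursion in $p$} to propagate the representation to all perimeters --- so your idea of ``decoupling'' the $\mu_p$'s at the end is not needed.

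The gap is exactly where you flag it, and it is not a bookkeeping issue but a structural one. For inclusion--exclusion to identify $(-1)^k\Delta^k a^p_v$ with $\P\bigl(t_v\subset T,\ \text{none of $k$ one-vertex extensions lies in }T\bigr)$, you need $k$ pairwise ``independent'' perimeter-preserving, one-vertex extensions $t^{(1)},\dots,t^{(k)}\supset t_v$ of perimeter $p$, such that for every $S\subseteq\{1,\dots,k\}$ the union $\bigcup_{i\in S}t^{(i)}$ is again a triangulation with a single hole of perimeter $p$ and exactly $v+|S|$ vertices. Such extensions are anchored to boundary edges of $t_v$, of which there are only $p$; so the construction cannot produce more than $p$ mutually compatible extensions, and complete monotonicity requires $\Delta^k\ge 0$ for \emph{all} $k$. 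Nested chains $t_v\subset t_{v+1}\subset\cdots$ do not help: since the events $\{t_{v+j}\subset T\}$ are then nested, the ``$t_v\subset T$ but none of the others'' event has probability $a^p_v-a^p_{v+1}$ for every $k$, not $\Delta^k a^p_v$. The paper circumvents this entirely with a \emph{non-combinatorial} argument: it proves $(\Delta^k a^p)_v\ge 0$ by induction on $k$, using the peeling equation rearranged as $a^{p+1}_{v+1}=a^p_v-2\sum\dots$ to pass from the hypothesis at $(k,p+1)$ to the conclusion at $(k+1,p)$, isolating the $i=0,\ j=1$ term. This inter-perimeter induction is the key idea you are missing.

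Two smaller inconsistencies. First, your normalization $a(p,v)/\bigl(C_p(\lambda_c)\lambda_c^v\bigr)$ does not match your proposed combinatorics: the inclusion--exclusion (even if it worked) controls $\Delta^k$ of the \emph{raw} sequence $a^p_v$, not of the normalized one, and $\Delta$ of the normalized sequence already requires $a^p_{v+1}\le\lambda_c\,a^p_v$, which is far stronger than what the peeling equation gives directly. The paper works with the unnormalized sequence, gets a measure on $[0,1]$, and then restricts it to $(0,\lambda_c]$ by two separate arguments: the convergence of the peeling series together with $|\mathcal{T}_1(j)|\sim c\,\lambda_c^{-j}j^{-5/2}$ rules out mass above $\lambda_c$, and a ``triangle chain'' argument with finiteness of degrees rules out an atom at $0$. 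Second, the martingale alternative you sketch is correct at the level of the identity --- $M_k^\lambda=C_{P_k}(\lambda)\lambda^{V_k}/a(P_k,V_k)$ is indeed a nonnegative martingale, precisely because the peeling equation holds both for $T$ and for each $\TT_\lambda$ --- but extracting a measurable random parameter $\Lambda$ and the disintegration $T\mid\Lambda\sim\TT_\Lambda$ from the family $(M_\infty^\lambda)_\lambda$ still requires establishing some form of the integral representation, so it does not ``sidestep Hausdorff'' without further work.
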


\paragraph{Combinatorial asymptotics.}
Finally, while we were unable to obtain directly asymptotics on $\tau(n,g)$ when both $n$ and $g$ go to $+\infty$, Theorem \ref{main_thm} allows us to obtain such estimates up to sub-exponential factors. For any $\theta \in \left[ 0,\frac{1}{2} \right)$, we denote by $\lambda(\theta)$ the value of $\lambda$ given by \eqref{eqn_lambda_vs_theta}.

\begin{thm}\label{thm_asympto}
Let $(g_n)$ be a sequence such that $0 \leq g_n \leq \frac{n+1}{2}$ for every $n$ and $\frac{g_n}{n} \to \theta \in \left[0, \frac{1}{2} \right]$. Then we have
\[ \tau(n,g_n)=n^{2g_n} \exp \left( f(\theta) n + o(n) \right) \]
as $n \to +\infty$, where $f(0)=\log 12\sqrt{3}$, also $f(1/2)=\log \frac{6}{e}$ and
\begin{equation}\label{eq_f}
f(\theta)= 2 \theta\log \frac{12\theta}{e} + \theta\int_{2}^{1/\theta} \log \frac{1}{\lambda(1/t)}\mathrm{d}t
\end{equation}
for $0<\theta<\frac{1}{2}$.
\end{thm}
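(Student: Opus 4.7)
The plan is to bootstrap the local convergence of Theorem~\ref{main_thm} into enumerative asymptotics via a telescoping identity. Write
\[ \log \tau(n, g_n) = \log \tau(2 g_n, g_n) + \sum_{k = 2g_n}^{n - 1} \log \frac{\tau(k+1, g_n)}{\tau(k, g_n)} \]
and estimate the two pieces separately. The base term counts rooted two-vertex triangulations of genus $g$ (with $4g$ faces and $6g$ edges); by classical enumeration of maps with few vertices (for instance via the Harer--Zagier/Lehman--Walsh formula for one-face maps combined with an edge-contraction bijection), one should obtain $\log \tau(2g, g) = 2g \log(12g/e) + o(g)$. Writing $g = g_n \sim \theta n$, this contributes $2 g_n \log n + 2\theta n \log(12\theta/e) + o(n)$, producing both the polynomial factor $n^{2g_n}$ and the first term of $f(\theta)$.

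The heart of the argument is the ratio asymptotic: for every $\theta' \in [0,1/2)$,
\[ \frac{\tau(k+1, g)}{\tau(k, g)} \longrightarrow \frac{1}{\lambda(\theta')} \qquad \text{as } k \to \infty,\; g/k \to \theta'. \]
Fix a small triangulation $t$ of the disk rooted on its boundary, with hole of perimeter $p$, $v$ total vertices and $j$ internal triangles. The number of $T \in \T(k, g)$ containing $t$ around the root equals $\tau_p(2k - j, g)$, where $\tau_p(m, g)$ is the number of rooted type-I triangulations of a $p$-gon of genus $g$ with $m$ internal triangles. Dividing by $\tau(k, g)$ and invoking Theorem~\ref{main_thm} yields $\tau_p(2k - j, g)/\tau(k, g) \to C_p(\lambda(\theta')) \lambda(\theta')^v$. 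Comparing two choices of $t$ differing by one extra pendant triangle (so $\Delta j = 1$, $\Delta v = 1$) gives $\tau_p(m - 1, g)/\tau_p(m, g) \to \lambda(\theta')$, and an elementary boundary surgery (e.g. closing a 2-gon, or gluing/removing a triangle along a boundary edge) transfers this to $\tau(k+1, g)/\tau(k, g) \to 1/\lambda(\theta')$.

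Inserted into the telescoping sum, the ratio estimate gives, via the substitution $k = g_n/s$,
\[ \sum_{k = 2 g_n}^{n-1} \log \frac{\tau(k+1, g_n)}{\tau(k, g_n)} = - g_n \int_{\theta}^{1/2} \frac{\log \lambda(s)}{s^2}\, ds + o(n), \]
and a further change of variables $s = \theta/t$ identifies the right-hand side with the integral term of~\eqref{eq_f}. The boundary cases $\theta \in \{0, 1/2\}$ follow by continuity. The main obstacle is promoting the pointwise ratio convergence into uniform convergence on compact subsets of $[0,1/2)$ as required by the Riemann sum; this should follow from a compactness/subsequence argument in which every accumulation point is identified with $1/\lambda(\theta')$ via Theorem~\ref{main_thm}. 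The most delicate regime is $s$ close to $1/2$ (i.e. $k$ close to $2g_n$), where $\lambda(s) \to 0$ and $\log \lambda$ diverges; quantitative control there probably requires refining the bounded ratio lemma~\ref{ratio_lem} with a matching lower bound in the small-$\lambda$ regime.
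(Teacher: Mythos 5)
Your telescoping strategy is indeed the one the paper uses, the ratio convergence you postulate is the paper's Lemma~\ref{prop_rapport_voisins} (obtained there in a more direct way, by evaluating $\P(t_1\subset T_{n,g_n})$ for the single perimeter-$1$ triangulation $t_1$ rather than by comparing two $p$-gon fillings), and your change of variables reproduces the integral term of~\eqref{eq_f}. (Incidentally, your computation gives $\frac{g_n}{\theta}\int_{2\theta}^1\log\frac{1}{\lambda(\theta/t)}\,\mathrm{d}t\sim n\int_{2\theta}^1\log\frac{1}{\lambda(\theta/t)}\,\mathrm{d}t$, i.e.\ \emph{without} the extra $\theta$ prefactor that appears in the stated~\eqref{eq_f}; that prefactor looks like a typo, since with it $f$ fails to be continuous at $\theta=0$, whereas without it one does get $f(0^+)=\log 12\sqrt{3}$.)

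The real problem is precisely the one you flag at the end and then leave unresolved: you telescope all the way down to $k=2g_n$, but there is no control on $\tau(k+1,g_n)/\tau(k,g_n)$ when $g_n/k$ is not bounded away from $1/2$ — the bounded ratio lemma~\ref{ratio_lem}, and hence Lemma~\ref{prop_rapport_voisins}, is unavailable there, and since $\lambda(\theta')\to 0$ as $\theta'\to 1/2$ the ratios genuinely blow up. So the Riemann-sum step fails on $k\in[2g_n,(2+\eps)g_n]$, and your base-case asymptotic $\log\tau(2g,g)=2g\log(12g/e)+o(g)$ is asserted but not established (the dual of a two-vertex triangulation is a \emph{bicellular} cubic map, so Harer--Zagier for unicellular maps does not apply directly and the edge-contraction reduction needs an actual argument). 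The paper sidesteps both issues together by telescoping only down to $(2+\eps)g_n$ with $\eps>0$ fixed — where the bounded ratio lemma gives the domination needed in~\eqref{eqn_telescopic_product} — and then proving \emph{two-sided} bounds on $\tau((2+\eps)g_n,g_n)$ (Proposition~\ref{prop_near_diagonal}): a lower bound from iterating Goulden--Jackson down to genus $0$, and an upper bound from majorizing $\tau$ by the number of tree-rooted triangulations, which cut open into precubic unicellular maps (counted exactly by Chapuy's formula) times a Catalan factor $\le 4^{\eps g}$. These bounds agree up to $e^{h(\eps)g+o(g)}$ with $h(\eps)\to 0$, and sending $\eps\to 0$ at the very end closes the argument. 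To make your write-up correct you would need to import essentially this entire near-diagonal block; the "refined bounded ratio lemma with a matching lower bound" you gesture at does not exist in the paper and would likely be harder to prove than the route the paper takes.
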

To the best of our knowledge, these are the first asymptotic results on the number of triangulations with both large size and high genus.
Note that the integral is well defined since $\lambda(\theta)$ is a continuous function and we have $\lambda(\theta) =O \left( 1/2-\theta \right)$ when $\theta \to 1/2$. Moreover, since $\lambda(\theta) \to \frac{1}{12\sqrt{3}}$ as $\theta \to 0$, it is easy to see that the function $f$ is continuous at $0$ and at $1/2$. The proof mostly relies on the observation that Theorem \ref{main_thm} gives the limit values of the ratio $\frac{\tau(n+1,g)}{\tau(n,g)}$.

\paragraph{Other types of triangulations.}
A natural question, which we do not answer in this paper, is to ask whether Theorem~\ref{main_thm} can be extended to type-II (i.e. with multiple edges but no loop) or type-III (i.e. with neither multiple edges nor loops) triangulations. To adapt our argument in the type-II setting, one would need to be extra careful with the surgery operations of Section~\ref{sec_tightness} and to overcome the absence of a Goulden--Jackson formula. The question seems more complicated for type-III triangulations, since then the spatial Markov property is partly lost, and it is not even clear how to define the PSHT. Another natural strategy would be to deduce type-II (resp. type-III) results from Theorem~\ref{main_thm}. The first step would be to prove that $T_{n,g}$ has a large $2$-connected (resp. $3$-connected) core.

\paragraph{Structure of the paper.}
The structure of the paper is as follows. In Section 1, we review basic definitions and previous results that will be used throughout the paper. In Section 2, we prove that the triangulations $T_{n,g_n}$ are tight for the local topology, and that any subsequential limit is a.s. planar and one-ended. In Section 3, we prove Theorem \ref{thm_weak_markov}, which implies that any subsequential limit of $T_{n,g_n}$ is a PSHT with random parameter $\Lambda$. In Section 4, we conclude the proof of Theorem \ref{main_thm} by showing that $\Lambda$ is deterministic and depends only on $\theta$. Finally, Section 5 is devoted to the proof of Theorem \ref{thm_asympto}.

\paragraph{Acknowledgments.} The authors thank Guillaume Chapuy and Nicolas Curien for helpful discussions and comments on earlier versions of this manuscript. The authors also thank the two anonymous referees for useful remarks. The first author is supported by ERC Geobrown (740943). The second author is fully supported by ERC-2016-STG 716083 "CombiTop". The authors would also like to thank the Isaac Newton Institute for Mathematical Sciences (EPSRC grant number EP/R014604/1) for its hospitality during the Random Geometry follow-up workshop when this work was started.

\tableofcontents

\section{Preliminaries}

\subsection{Definitions}

The goal of this paragraph is to state basic definitions on triangulations that will be used throughout the paper.

As in~\cite{C-StFlour}, we define a (finite or infinite) \emph{map} $M$ as a way to glue a collection of oriented polygons, called the \emph{faces}, along their edges in a connected way that matches the orientations. Note that this definition is not restricted to maps with finitely many faces. By forgetting the faces of $M$ and looking only at its vertices and edges, we obtain a graph $G$ (if $M$ is infinite, then $G$ may have vertices with infinite degree).

If the number of polygons is finite, then $M$ is always homeomorphic to an orientable topological surface, so we can define the genus of $M$ as the genus of this surface. The maps that we consider will always be \emph{rooted}, i.e. equipped with a distinguished oriented edge called the \emph{root edge}. The face on the right of the root edge is the \emph{root face}, and the vertex at the start of the root edge is the \emph{root vertex}.

A \emph{triangulation} is a rooted map where all the faces have degree $3$. We will mostly be interested in \emph{type-I triangulations}, i.e. triangulations that may contains loops and multiple edges. We mention right now that a \emph{type-II triangulation} is a triangulation that may contain multiple edges, but no loops. In graph-theoretic terms, a type-$i$ triangulation is a triangulation with girth (i.e. smallest cycle length) at least $i$. Unless specified otherwise, by \emph{triangulation}, we will always mean \emph{type-I triangulation}.

For every $n \geq 1$ and $g \geq 0$, we will denote by $\T(n,g)$ the set of triangulations of genus $g$ with $2n$ faces (the number of faces must be even to glue the edges two by two). By the Euler formula, a triangulation of $\T(n,g)$ has $3n$ edges and $n+2-2g$ vertices. In particular, the set $\T(n,g)$ is nonempty if and only if $n \geq 2g-1$. We will also denote by $\tau(n,g)$ the cardinal of $\T(n,g)$ and by $T_{n,g}$ a uniform random variable on $\T(n,g)$.

We will also need to consider two different notions of triangulations with boundaries, that we call \emph{triangulations with holes} and \emph{triangulations of multi-polygons}. Basically, the first ones will be used to describe a neighbourhood of the root in a triangulation, and the second ones to describe the complementary of this neighbourhood.

For $\ell \geq 1$ and $p_1, p_2, \dots, p_{\ell} \geq 1$, we call a \emph{triangulation with holes of perimeter $p_1, \dots, p_{\ell}$} a map where all the faces have degree $3$ except, for every $1 \leq i \leq \ell$, a face $h_i$ of degree $p_i$. The faces $h_i$ are called the \emph{holes}. The boundaries of the faces $h_i$ must be simple and edge-disjoint, but may have common vertices (see the bottom part of Figure \ref{fig_t_subset_T}). A triangulation with holes will be rooted at a distinguished oriented edge, which may lie on the boundary of a hole or not. Triangulations with holes will always be finite.

A (possibly infinite) \emph{triangulation of the $(p_1, \dots, p_{\ell})$-gon} is a map where all the faces have degree $3$ except, for every $1 \leq i \leq \ell$, a face $f_i$ of degree $p_i$. The faces $f_i$ are called the \emph{external faces}, and must be simple and have vertex-disjoint boundaries. Moreover, each of the external faces comes with a distinguished edge on its boundary, such that the external face lies on the right of the distinguished edge.

We denote by $\T_{p_1,p_2,\dots,p_{\ell}}(n,g)$ the set of triangulations of the $(p_1,p_2,\dots,p_{\ell})$-gon of genus $g$ with $2n-\sum_{i=1}^{\ell} (p_i-2)$ triangles, and by $\tau_{p_1,p_2,\dots,p_{\ell}}(n,g)$ its cardinal. The reason why we choose this convention is that by the Euler formula, a triangulation of $\T_{p_1,p_2,\dots,p_{\ell}}(n,g)$ has $n+2-2g$ vertices in total, just like a triangulation of $\T(n,g)$.

If $t$ is a triangulation with holes and $T$ a (finite or infinite) triangulation, we write $t \subset T$ if $T$ can be obtained from $t$ by gluing one or several triangulations of multi-polygons to the holes of $t$ (see Figure \ref{fig_t_subset_T}). In particular, in the planar case, this definition coincides with the one used e.g. in \cite{AS03}. If $T$ is an infinite triangulation, we say that it is \emph{one-ended} if for every finite $t$ with $t \subset T$, only one connected component of $T \backslash t$ contains infinitely many triangles. We also say that $T$ is \emph{planar} if every finite $t$ with $t \subset T$ is planar.

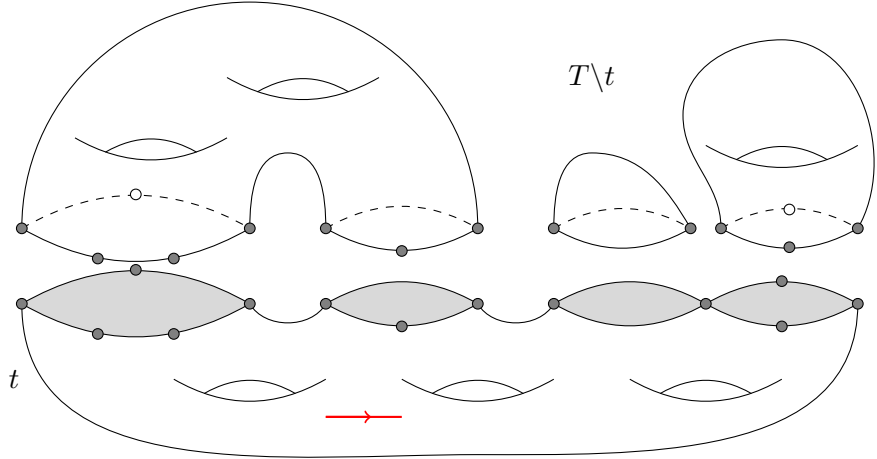
\begin{figure}
\begin{center}
\begin{tikzpicture}
\fill[gray!30] (-6,0) to[bend left] (-3,0) to[bend left] (-6,0);
\fill[gray!30] (-2,0) to[bend left] (0,0) to[bend left] (-2,0);
\fill[gray!30] (1,0) to[bend left] (3,0) to[bend left] (1,0);
\fill[gray!30] (3,0) to[bend left] (5,0) to[bend left] (3,0);

\draw(-6,0) to[out=270,in=180] (0,-2) to [out=0,in=270] (5,0);
\draw(-6,0) to[bend left] (-3,0) to[bend left] (-6,0);
\draw(-2,0) to[bend left] (0,0) to[bend left] (-2,0);
\draw(1,0) to[bend left] (3,0) to[bend left] (1,0);
\draw(3,0) to[bend left] (5,0) to[bend left] (3,0);
\draw(-3,0)to[bend right=60](-2,0);
\draw(0,0)to[bend right=60](1,0);

\draw(-4,-1)to[bend right](-2,-1);
\draw(-3.6,-1.19)to[bend left](-2.4,-1.19);
\draw(-1,-1)to[bend right](1,-1);
\draw(-0.6,-1.19)to[bend left](0.6,-1.19);
\draw(2,-1)to[bend right](4,-1);
\draw(2.4,-1.19)to[bend left](3.6,-1.19);
\draw(-5.3,2.2)to[bend right](-3.3,2.2);
\draw(-4.9,2.01)to[bend left](-3.7,2.01);
\draw(-3.3,3)to[bend right](-1.3,3);
\draw(-2.9,2.81)to[bend left](-1.7,2.81);
\draw(3,2.1)to[bend right](5,2.1);
\draw(3.4,1.91)to[bend left](4.6,1.91);

\draw(-6,1) to[bend right] (-3,1);
\draw[dashed](-6,1) to[bend left] (-3,1);
\draw(-2,1) to[bend right] (0,1);
\draw(1,1) to[bend right] (2.8,1);
\draw(3.2,1) to[bend right] (5,1);
\draw[dashed](-2,1) to[bend left] (0,1);
\draw[dashed](1,1) to[bend left] (2.8,1);
\draw[dashed](3.2,1) to[bend left] (5,1);

\draw(-3,1)to[out=90,in=180](-2.5,2)to[out=0,in=90](-2,1);
\draw(-6,1) to[out=90,in=180] (-3,4) to[out=0,in=90] (0,1);
\draw(1,1) to[out=90,in=180] (1.5,2) to[out=0,in=120] (2.8,1);
\draw(3.2,1) to[out=90, in=270] (2.7,2.5) to[out=90,in=180] (4,3.5) to[out=0,in=60] (5,1);

\draw[red, thick, ->](-2,-1.5)--(-1.4,-1.5);
\draw[red, thick](-2,-1.5)--(-1,-1.5);
\draw(-6.1,-1) node[texte]{$t$};
\draw(1.5,3) node[texte]{$T \backslash t$};

\draw(-6,0)node{};
\draw(-5,-0.4)node{};
\draw(-4,-0.4)node{};
\draw(-3,0)node{};
\draw(-4.5,0.45)node{};
\draw(-2,0)node{};
\draw(0,0)node{};
\draw(-1,-0.3)node{};
\draw(1,0)node{};
\draw(3,0)node{};
\draw(5,0)node{};
\draw(4,-0.3)node{};
\draw(4,0.3)node{};

\draw(-6,1)node{};
\draw(-5,0.6)node{};
\draw(-4,0.6)node{};
\draw(-3,1)node{};
\draw(-4.5,1.45)node[white]{};
\draw(-2,1)node{};
\draw(0,1)node{};
\draw(-1,0.7)node{};
\draw(1,1)node{};
\draw(2.8,1)node{};
\draw(3.2,1)node{};
\draw(5,1)node{};
\draw(4.1,0.75)node{};
\draw(4.1,1.25)node[white]{};
\end{tikzpicture}
\end{center}
\caption{A large triangulation $T$ with genus $7$ and a smaller triangulation with holes $t$ (in the bottom) such that $t \subset T$. The holes are filled by a triangulation of the $(3,5)$-gon, a triangulation of the $2$-gon and a triangulation of the $4$-gon. The triangles are not drawn on the picture.}\label{fig_t_subset_T}
\end{figure}

We also recall that to a triangulation $t$, we can naturally associate its \emph{dual map} $t^*$: it is the map whose vertices are the faces of $t$ and where for each edge $e$ of $t$, we draw the dual edge $e^*$ joining the two faces incident to $e$. If $t$ is a triangulation of a multi-polygon, it will be more suitable to work with the convention that the external faces \emph{do not} belong to the dual $t^*$. Note that triangulations of multi-polygons have simple and disjoint boundaries, so their dual $t^*$ will always be connected.

Finally, we recall the definition of the \emph{graph distance} in a map. For a pair of vertices $(v,v')$, the distance $d_t(v,v')$ is the length of the shortest path of edges of $t$ between $v$ and $v'$. We call $d_t^*$ the graph distance in the dual\footnote{In particular, if $t$ is a triangulation of a multi-polygon, then $d_t^*(f,f')$ is the length of the smallest dual path which \emph{avoids} the external faces.} map $t^*$. We also note that there is a natural way to extend $d^*_t$ to the vertices of $t$.
For a pair of distinct vertices $(v,v')$, we set
\[d_t^*(v,v')=\min(d_t^*(f,f'))+1,\]
where the minimum is taken over all pairs $(f,f')$ of faces such that $f$ is incident to $v$ and $f'$ is incident to $v'$.

\subsection{Combinatorics}

The goal of this paragraph is to summarize some previously known or basic combinatorial results about triangulations in higher genus. We start with the Goulden--Jackson recurrence formula.

\begin{thm}\cite{GJ08}
Let $f(n,g)=(3n+2) \tau(n,g)$, with the conventions $f(-1,0)=\frac{1}{2}$, $f(0,0)=2$ and $f(-1,g)=f(0,g)=0$ for $g \geq 1$. For every $n,g \geq 0$ with $g \leq \frac{n+1}{2}$, we have
\begin{equation}\label{eqn_Goulden_Jackson}
f(n,g)=\frac{4(3n+2)}{n+1} \left( n(3n-2) f(n-2,g-1) + \sum_{\substack{n_1+n_2=n-2 \\ g_1+g_2=g}} f(n_1, g_1) f(n_2, g_2) \right).
\end{equation}
\end{thm}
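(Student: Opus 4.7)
The plan is to follow the original strategy of Goulden and Jackson, which exploits the connection between map enumeration and the KP hierarchy of integrable systems. The core idea is to encode triangulations via factorizations in a symmetric group and to recognize a suitable generating function as a KP tau function, so that the first nontrivial Hirota bilinear equation translates directly into a recurrence of the required shape.

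First I would pass to the permutation-model encoding of rooted triangulations: a rooted type-I triangulation with $2n$ faces corresponds (up to an overall rooting factor) to a pair of permutations $(\sigma, \alpha)$ in $\Sym_{6n}$ acting transitively on $\{1, \dots, 6n\}$, where $\alpha$ is a fixed-point-free involution (the edge-pairing), $\sigma$ is a product of $2n$ disjoint $3$-cycles (the cyclic orientation around each triangle), and the cycles of $\phi = \sigma \alpha$ correspond bijectively to the vertices of $t$, with their lengths giving the vertex degrees. Using this encoding, I would introduce the refined generating function
\[ \Phi(z; \vec p) = \sum_{n \geq 0} \frac{z^{2n}}{(6n)!} \sum_{t} \prod_{v \in V(t)} p_{\deg(v)}, \]
where the inner sum runs over all rooted triangulations with $2n$ faces and $\vec p = (p_1, p_2, \dots)$ are formal variables. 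By the Frobenius formula for connection coefficients in $\Sym_{6n}$, $\Phi$ can be rewritten as a sum of Schur functions in the $p_i$ weighted by a content-product coefficient attached to the cycle type of $\sigma$, and Okounkov's theorem then implies that $\Phi$ is a tau function of the KP hierarchy in the variables $p_i$.

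Next I would apply the first nontrivial Hirota bilinear equation of KP, a fourth-order PDE in $p_1, p_2, p_3, p_4$. After specializing $\vec p$ so as to retain only the dependence on the total number of vertices, this PDE descends to a quadratic functional identity for the two-variable generating series $F(z, u) = \sum_{n, g} \tau(n, g)\, z^n u^{n+2-2g}$. Extracting the coefficient of a single monomial $z^n u^{n+2-2g}$ then produces exactly \eqref{eqn_Goulden_Jackson}: the factor $(3n+2)$ arises as the eigenvalue of a scaling derivative applied to $F$; the prefactor $n(3n-2)$ together with the shift to $(n-2, g-1)$ comes from a linear second-derivative contribution, which combinatorially corresponds to removing a handle; and the convolution $\sum f(n_1, g_1) f(n_2, g_2)$ is the characteristic quadratic part of a Hirota relation, combinatorially corresponding to splitting the triangulation into two connected pieces after a suitable surgery.

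The main obstacle is the algebraic step: recognizing $\Phi$ as a KP tau function via the content-product Schur expansion and Okounkov's theorem, together with the careful bookkeeping needed to match the exact coefficients in \eqref{eqn_Goulden_Jackson}. An alternative, more combinatorial route would be to seek a direct bijective interpretation of $(3n+2)\tau(n,g)$ as the number of triangulations in $\T(n,g)$ carrying an extra marked structure, and to exhibit an explicit surgery that either removes a handle (yielding the $f(n-2, g-1)$ term) or cuts the triangulation into two pieces (yielding the convolution); such bijective proofs have been pursued for related recurrences but are combinatorially intricate and in this case do not appear easier than the integrable-systems derivation.
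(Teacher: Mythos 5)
The paper does not prove this recurrence but cites Goulden and Jackson \cite{GJ08}, whose derivation is exactly the KP-hierarchy route you outline: permutation encoding of (dual) cubic maps, recognition of the refined generating series as a KP tau function via a Schur-function expansion, and extraction of the recurrence from the first Hirota bilinear equation after a suitable specialization. Your sketch is therefore a faithful summary of the cited proof; the steps you flag as the main obstacle are indeed where the technical effort in \cite{GJ08} is concentrated.
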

As explained in the introduction, this formula is in theory enough to compute all the cardinals $\tau(n,g)$, but efforts to extract asymptotics from here when both $n$ and $g$ go to $+\infty$ have failed so far. Our only use of this formula willl be in Section~\ref{subsec_planar_one}. We will not fully use the Goulden--Jackson formula, but only the two inequalities~\eqref{eqn_GJ_consequence_1} and~\eqref{eqn_GJ_consequence_2} that both follow easily from~\eqref{eqn_Goulden_Jackson}.

We also state right now a crude inequality that bounds the number of triangulations of multi-polygons with genus $g$ by the number of triangulations of genus $g$. This will be useful later.

\begin{lem}\label{lem_fill_holes}
For every $n,g \geq 0$ and $p_1, \dots, p_{\ell} \geq 1$, we have
\[ \tau_{p_1, \dots, p_{\ell}} (n,g) \leq (6n)^{\ell-1} \tau(n,g). \]
\end{lem}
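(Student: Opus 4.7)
The plan is to construct an explicit injection $\phi : \T_{p_1,\dots,p_\ell}(n,g) \hookrightarrow \T(n,g) \times E^{\ell-1}$, where $E$ denotes the set of oriented edges of the image triangulation. Since an element of $\T(n,g)$ has $3n$ edges and hence $6n$ oriented edges, this immediately yields the bound $(6n)^{\ell-1}\tau(n,g)$.

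To define $\phi$, I would canonically fill each external face $f_i$ of $t$ using only its distinguished edge $e_i$, handling three cases depending on $p_i$. When $p_i \geq 3$, fan-triangulate $f_i$ from the tail of $e_i$: this adds $p_i - 3$ diagonals and $p_i - 2$ triangles sharing the apex, with $e_i$ preserved as the marked edge. When $p_i = 2$, identify the two boundary edges of the bigon $f_i$ into a single merged edge, which becomes the new marked edge. When $p_i = 1$, the boundary loop lies on the outer side of a unique triangle $T_0$ whose two other edges are parallel; \emph{pop the bubble} by deleting the external face, the loop and $T_0$, and merging those two parallel edges into a single marked edge. A short Euler computation shows that each local operation preserves the vertex count and the genus, and that their combined effect is a closed triangulation with $2n$ triangles, i.e. an element of $\T(n,g)$. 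The map $\phi$ sends $t$ to this closed triangulation rooted at the marked edge coming from $f_1$, together with the $\ell - 1$ other marked edges.

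For injectivity, each of the three local operations is explicitly invertible given $p_i$ and the corresponding marked edge: one recognizes and undoes a fan of $p_i - 2$ triangles at the apex, splits a merged edge back into a bigon, or splits and re-attaches a loop to reconstruct a bubble. The hypothesis that the external faces of $t$ have pairwise vertex-disjoint boundaries plays a decisive role here: it guarantees that the regions of the closed triangulation affected by the different fillings are pairwise disjoint, so the local inverses commute and together recover $t$ uniquely. I expect the main delicate step to be the two degenerate cases $p_i \in \{1,2\}$, for which no fan triangulation exists: one has to describe the ad hoc moves carefully and verify the Euler accounting in the presence of the loops and multi-edges that type-I triangulations may carry.
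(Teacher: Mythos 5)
Your proof is essentially the same as the paper's: you fill the external faces by exactly the same three-case construction (fan triangulation for $p_i\geq 3$, gluing the two edges of the bigon for $p_i=2$, and the classical loop-popping transformation for $p_i=1$), root the result at the marked edge from $f_1$, and record the remaining $\ell-1$ marked edges to get the $(6n)^{\ell-1}$ factor; the injectivity argument via undoing the local moves is also the same. Your added remarks about the Euler accounting and the role of vertex-disjointness of the external faces are correct elaborations of points the paper leaves implicit.
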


\begin{proof}
We describe a way to associate with each map $t$ of $\T_{p_1, \dots, p_{\ell}}(n,g)$ a map $\widetilde{t}$ of $\T(n,g)$ with some marked oriented edges. For each external face $f_i$ of $t$:
\begin{itemize}
\item[$\bullet$]
if $p_i \geq 3$, we triangulate $f_i$ by joining all the vertices of $\partial f_i$ to the start of the distinguished edge on $\partial f_i$, and we mark this edge as $e_i$;
\item[$\bullet$]
if $p_i=2$, we simply glue together the two edges of $\partial f_i$, and mark the edge that we obtain as $e_i$;
\item[$\bullet$]
if $p_i=1$, we use the "classical" root transformation shown on Figure \ref{boundary_one}, and mark the edge obtained by the gluing as $e_i$.
\end{itemize}
We obtain a triangulation with the same genus as the initial one, and we root it at $e_1$. Note that the above operation does not change the number of vertices, so the triangulation belongs to $\T(n,g)$.
It is easy to see that $t \mapsto \widetilde{t}$ is injective. Indeed, if we know $p_i \geq 3$ and the edge $e_i$, then the $p_i-2$ triangles created by triangulating $f_i$ are the first $p_i-2$ triangles on the right of $e_i$ that are incident to its starting point. If $p_i \in \{1,2\}$, the reverse operation is straightforward. Finally, $\widetilde{t}$ is a triangulation of $\T(n,g)$ with $\ell-1$ marked oriented edges (plus its root edge). Since any triangulation of $\T(n,g)$ has $6n$ oriented edges, we are done.

\end{proof}
\begin{figure}
\centering
\includegraphics[scale=0.5]{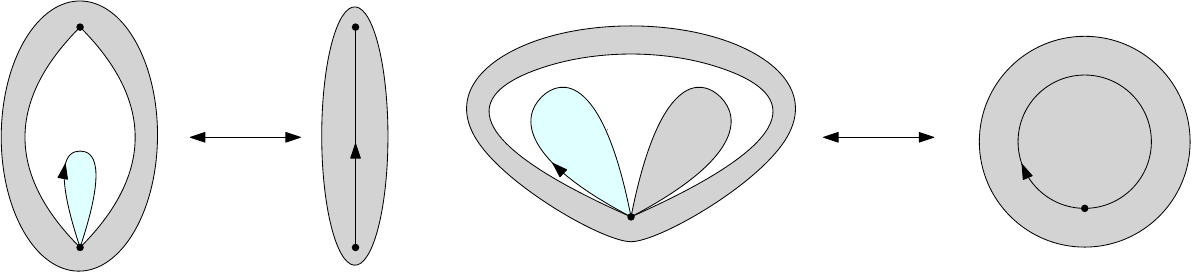}
\caption{Getting rid of a boundary of size $1$. The boundary faces are in blue}
\label{boundary_one}
\end{figure}
\begin{rem}\label{rem_root_transfo}
The bijection of Figure \ref{boundary_one} is classical and implies in particular $\tau_1(n,g)=\tau(n,g)$.
\end{rem}

\subsection{The PSHT}
\label{subsec_psht}

In this subsection, we recall the definition and some basic properties of the type-I Planar Stochastic Hyperbolic Triangulations, or PSHT. They were introduced in \cite{CurPSHIT} in the type-II setting (no loops), but we will be more interested in the type-I PSHT defined in \cite{B16}. The PSHT $(\TT_{\lambda})_{0 < \lambda \leq \lambda_c}$ form a one-parameter family of random infinite triangulations of the plane, where $\lambda_c=\frac{1}{12\sqrt{3}}$. Their distribution is characterized as follows. There is a family of constants $\left( C_p(\lambda) \right)_{p \geq 1}$ such that for every planar triangulation with a hole of perimeter $p$ and $v$ vertices in total, we have
\begin{equation}\label{definition_psht}
\P \left( t \subset \TT_{\lambda} \right) =C_p(\lambda) \times \lambda^{v}.
\end{equation}
Moreover, let $h$ be the unique solution in $\left( 0, \frac{1}{4} \right]$ of
\begin{equation}\label{eqn_defn_h}
\lambda=\frac{h}{(1+8h)^{3/2}}.
\end{equation}
Then we have
\begin{equation}\label{eqn_cp_psht}
C_p(\lambda)=\frac{1}{\lambda} \left( 8+\frac{1}{h} \right)^{p-1} \sum_{q=0}^{p-1} \binom{2q}{q} h^q,
\end{equation}
so the distribution of $\TT_{\lambda}$ is completely explicit.

A very useful consequence of \eqref{definition_psht} is the spatial Markov property of $\TT_{\lambda}$: for any triangulation $t$ with a hole of perimeter $p$, conditionally on $t \subset \TT_{\lambda}$, the distribution of the complementary $\TT_{\lambda} \backslash t$ only depends on $p$. Therefore, it is possible to discover it in a Markovian way by a \emph{peeling exploration}.

Since this will be useful later, we recall basic definitions related to peeling explorations. A \emph{peeling algorithm} $\mathcal{A}$ is a mapping that associates with every triangulation with holes an edge on the boundary of one of the holes. Given an infinite triangulation $T$ and a peeling algorithm $\mathcal{A}$, we can define an increasing sequence $\left( \expl_T^{\mathcal{A}}(k) \right)_{k \geq 0}$ of triangulations with holes such that $\expl_T^{\mathcal{A}}(k) \subset T$ for every $k$ in the following way:
\begin{itemize}
\item
the map $\expl_T^{\mathcal{A}}(0)$ is the trivial map consisting of the root edge only,
\item
for every $k \geq 1$, the triangulation $\expl_{T}^{\mathcal{A}}(k+1)$ is obtained from $\expl_T^{\mathcal{A}}(k)$ by adding the triangle incident to $\mathcal{A} \left( \expl_T^{\mathcal{A}}(k) \right)$ outside of $\expl_T^{\mathcal{A}}(k)$ and, if this triangle creates a finite hole, all the triangles in this hole.
\end{itemize}
Such an exploration is called \emph{filled-in}, because all the finite holes are filled at each step. For the PSHT, we denote by $P^{\lambda}(k)$ and $V^{\lambda}(k)$ the perimeter and volume of $\expl_T^{\mathcal{A}}(k)$, where by \emph{volume} we mean \emph{total number of vertices}. The spatial Markov property ensures that $\left( P^{\lambda}(k), V^{\lambda}(k) \right)_{k \geq 0}$ is a Markov chain on $\N^2$ and that its transitions do not depend on the algorithm $\mathcal{A}$. We also recall the asymptotic behaviour of these two processes. We have
\begin{equation}\label{eqn_asymptotic_peeling_psht}
\frac{P^{\lambda}(k)}{k} \xrightarrow[k \to +\infty]{a.s.}\sqrt{\frac{1-4h}{1+8h}} \hspace{1cm} \mbox{and} \hspace{1cm} \frac{V^{\lambda}(k)}{k} \xrightarrow[k \to +\infty]{a.s.} \frac{1}{\sqrt{(1+8h)(1-4h)}},
\end{equation}
where $h$ is given by \eqref{eqn_defn_h}. These estimates are proved in \cite{CurPSHIT} in the type-II setting. For the type-I PSHT, the proofs are the same and use the combinatorial results of \cite{Kri07}. In particular, the asymptotic ratio between $P^{\lambda}(k)$ and $V^{\lambda}(k)$ is $1-4h$, which is a decreasing function of $\lambda$. This shows that the PSHT for different values of $\lambda$ are singular with respect to each other, which will be useful later.

\subsection{Local convergence and dual local convergence.}
\label{subsec_local_topology}

The goal of this section is to recall the definition of local convergence in a setting that is not restricted to planar maps. We also define a weaker (at least for triangulations) notion of local convergence that we call "dual local convergence".

As in the planar case, to define the local convergence, we first need to define balls of triangulations. Let $t$ be a finite triangulation. As usual, for every $r \geq 1$, we denote by $B_r(t)$ the map formed by all the faces of $t$ which are incident to at least one vertex at distance at most $r-1$ from the root vertex, along with all their vertices and edges. We denote by $\partial B_r(t)$ the set of edges $e$ such that exactly one side of $e$ is adjacent to a triangle of $B_r(t)$. The other sides of these edges form a finite number of holes, so $B_r(t)$ is a finite triangulation with holes. Note that contrary to the planar case, there is no bijection between the holes and the connected components of $t \backslash B_r(t)$ (cf. the component on the left of Figure \ref{fig_t_subset_T}). We also write $B_0(t)$ for the trivial "map" consisting of only one vertex and zero edge.

For any two finite triangulations $t$ and $t'$, we write
\[d_{\loc}(t,t')=\left( 1+\max \{r \geq 0 | B_r(t)=B_r(t')\} \right)^{-1}.\]
This is the \emph{local distance} on the set of finite triangulations. As in the planar case, its completion $\overline{\T}$ is a Polish space, which can be viewed as the set of (finite or infinite) triangulations in which all the vertices have finite degrees. However, this space is not compact.

In some parts of this paper, it will be more convenient to work with a weaker notion of convergence which we call the \emph{dual local convergence}. The reason for this is that, since the degrees in the dual of a triangulation are bounded by $3$, tightness for this distance will be immediate, which will allow us to work directly on infinite subsequential limits.

More precisely, we recall that $d^*$ is the graph distance on the \emph{dual} of a triangulation. For any finite triangulation $t$ and any $r \geq 0$, we denote by $B_r^{*}(t)$ the map formed by all the faces at dual distance at most $r$ from the root face, along with all their vertices and edges. Like $B_r(t)$, this is a finite triangulation with holes. For any two finite triangulations $t$ and $t'$, we write
\[d_{\loc}^*(t,t')=\left( 1+\max \{r \geq 0 | B_r^*(t)=B_r^*(t')\} \right)^{-1}.\]
Note that in any triangulation $t$, since the dual graph of $t$ is $3$-regular, there are at most $3 \times 2^{r-1}$ faces at distance $r$ from the root face. Therefore, for each $r$, the volume of $B_r^*(t)$ is bounded by a constant depending only on $r$, so $B_r^*(t)$ can only take finitely many values. It follows from a simple diagonal extraction argument that the completion for $d_{\loc}^*$ of the set of finite triangulations is compact. We write it $\overline{\T}^*$. This set coincides with the set of finite or infinite triangulations, where the degrees of the vertices may be infinite.

Roughly speaking, the main steps of our proof for tightness will be the following. Since $(\overline{\T}^*, d_{\loc}^*)$ is compact, the sequence $(T_{n, g_n})$ is tight for $d^*_{\loc}$. We will prove that every subsequential limit is planar and one-ended, and finally that its vertices must have finite degrees. We state right now an easy, deterministic lemma that will allow us to conclude at this point.

\begin{lem}\label{lem_dual_convergence}
Let $(t_n)$ be a sequence of triangulations of $\overline{\T}$. Assume that
\[ t_n \xrightarrow[n \to +\infty]{d_{\loc}^*} t,\]
with $t \in \overline{\T}$. Then $t_n \to t$ for $d_{\loc}$ when $n \to +\infty$.
\end{lem}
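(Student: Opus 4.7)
The plan is to fix $r \geq 0$ and show that $B_r(t_n) = B_r(t)$ for all $n$ large enough, which gives $d_{\loc}(t_n, t) \leq 1/(r+1)$ and hence the desired convergence. Let $\rho$ be the root vertex of $t$ and set $V_r := \{v : d_t(\rho, v) \leq r - 1\}$. Since $t \in \overline{\T}$ has all vertex degrees finite, a routine induction on $r$ shows that $V_r$ is finite. Each $v \in V_r$ is incident to only finitely many faces of $t$, and each such face is at some finite dual distance from the root face. We may therefore pick an integer $R$ large enough that every face of $t$ incident to some vertex of $V_r$ lies in $B_R^*(t)$. By the hypothesis, $B_R^*(t_n) = B_R^*(t) =: B$ for all $n$ large enough.

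The key observation is that every $v \in V_r$ is an \emph{interior} vertex of $B$: the faces of $B$ incident to $v$ already form a complete cyclic rotation around $v$, so $v$ does not touch the boundary of any hole of $B$. This is a purely combinatorial property of the map-with-holes $B$, so it persists when $B$ is viewed as a sub-map of $t_n$ via $B \subset t_n$. Consequently, the corners at $v$, and hence the incident faces, incident edges and set of neighbors of $v$, agree in $t$ and in $t_n$.

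From here the matching of primal balls follows by two easy arguments. First, a shortest $t$-path from $\rho$ to a vertex $v$ with $d_t(\rho, v) \leq r-1$ only uses edges incident to vertices of $V_r$, hence edges of $B$; the same path then exists in $t_n$, so $d_{t_n}(\rho, v) \leq r - 1$. Conversely, given a shortest $t_n$-path $\rho = u_0, u_1, \dots, u_k = u$ with $k \leq r-1$, we prove by induction on $i$ that $u_i \in V_r$ and $d_t(\rho, u_i) \leq i$: the base case is trivial, and the step applies the previous paragraph at $u_i$, so that the $t_n$-neighbor $u_{i+1}$ is also a $t$-neighbor of $u_i$. Hence the vertex set at distance $\leq r-1$ agrees between $t$ and $t_n$, and the interiority observation then identifies the incident faces and edges, yielding $B_r(t_n) = B_r(t)$.

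The only delicate step is the interiority observation; concretely, one must be convinced that the map-with-holes $B$ alone records whether a given vertex is interior, so that no extra face can be attached to such a vertex when $B$ is completed into an ambient triangulation. Apart from this purely combinatorial input, the argument is just a path-induction using $B_R^*(t_n) = B_R^*(t)$, and crucially relies on the assumption that $t$ belongs to $\overline{\T}$ rather than only $\overline{\T}^*$, since otherwise $V_r$ could be infinite and no finite $R$ would work.
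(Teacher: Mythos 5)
Your proof is correct and follows essentially the same strategy as the paper: choose a dual radius $R$ large enough that $B_R^*(t)$ contains all faces incident to vertices of $V_r$ (equivalently, contains $B_r(t)$), then use $B_R^*(t_n)=B_R^*(t)$ for large $n$ to conclude $B_r(t_n)=B_r(t)$. You give a more detailed justification of the final step (the interiority of vertices of $V_r$ and the path induction) than the paper, which states but does not elaborate on it.
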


Note that the converse is very easy: the dual ball $B_r^*(t)$ is a deterministic function of $B_{r+1}(t)$, so $d^*_{\loc} \leq 2 d_{\loc}$, and convergence for $d_{\loc}$ implies convergence for $d^*_{\loc}$.

\begin{proof}[Proof of Lemma \ref{lem_dual_convergence}]
Let $r \geq 1$. Since $t \in \overline{\T}$, the ball $B_r(t)$ is finite, so we can find $r^*$ such that $B_r(t) \subset B_{r^*}^*(t)$. By definition of $d_{\loc}^*$, for $n$ large enough, we have $B^*_{r^*}(t_n)=B^*_{r^*}(t)$. Therefore, we have $B_r (t) \subset t_n$, so $B_r(t_n)=B_r(t)$ for $n$ large enough. Since this is true for any $r \geq 1$, we are done.
\end{proof}

\section{Tightness, planarity and one-endedness}
\label{sec_tightness}

\subsection{The bounded ratio lemma}

The goal of this section is to prove the following result, which will be our main new input in the proof of tightness.
\begin{lem}[Bounded ratio lemma]\label{ratio_lem}
Let $\eps>0$. Then there is a constant $C_\eps>0$ with the following property: for every $n,g \geq 0$ satisfying $\frac{g}{n} \leq\frac{1}{2}-\eps$ and for every $p \geq 1$, we have
\[\frac{\tau_p(n,g)}{\tau_p(n-1,g)} \leq C_\eps.\]
In particular, by the usual bijection between $\T_1(n,g)$ and $\T(n,g)$, we have
\[\frac{\tau(n,g)}{\tau(n-1,g)} \leq C_\eps.\]
\end{lem}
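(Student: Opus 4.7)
My plan is to construct a local surgery $\Phi : (t,v) \mapsto t'$ that sends a triangulation $t \in \T_p(n,g)$ together with a marked low-degree vertex $v$ to a triangulation $t' \in \T_p(n-1,g)$, and then to conclude by a double-counting argument. The constraint $g/n \leq 1/2 - \eps$ enters through the Euler formula, which forces the existence of many low-degree vertices in $t$.

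First, any $t \in \T_p(n,g)$ has $V(t) = n+2-2g \geq 2\eps n + 2$ vertices, while the total degree is bounded by $2|E(t)| = O(n)$. The average vertex degree is at most $3/\eps + o(1)$, so by Markov at least $V(t)/2 \geq \eps n$ vertices have degree at most $K := \lceil 6/\eps\rceil$. Next, given a pair $(t,v)$ with $d := \deg(v) \leq K$ and $v$ not incident to the root edge nor to the boundary of the hole, define $\Phi(t,v) = t'$ by deleting $v$ and its $d$ incident triangles, then refilling the resulting $d$-gonal region by a fan triangulation from a canonically chosen boundary vertex. A direct computation giving $(\Delta V, \Delta E, \Delta F) = (-1,-3,-2)$ shows that the genus and boundary perimeter are preserved and $t' \in \T_p(n-1,g)$.

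To control the fibers of $\Phi$, I would observe that each preimage $(t,v)$ of a fixed $t' \in \T_p(n-1,g)$ can be recovered from a local \emph{fan structure} in $t'$: a vertex $w$, an oriented half-edge at $w$, and a fan size $d-2 \in \{1,\dots,K-2\}$. The total number of such structures is at most $2|E(t')| \cdot K = O(nK)$, so each $t'$ has $O(nK)$ preimages under $\Phi$. Combining this with the lower bound $\#\{(t,v)\} \geq (\eps n - O(1)) \tau_p(n,g)$ from the abundance step, double-counting gives
\[ (\eps n - O(1))\, \tau_p(n,g) \leq O(nK)\, \tau_p(n-1,g), \]
hence $\tau_p(n,g)/\tau_p(n-1,g) \leq O(K/\eps) = O(1/\eps^2) =: C_\eps$.

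The main obstacle will be carefully handling degenerate cases in the type-I setting: the boundary of the $d$-gon around $v$ may fail to be simple, and the canonical fan re-triangulation must be specified unambiguously when $v$ carries a loop or has duplicated neighbors. Degenerate vertices near the root edge or the hole's boundary can be excluded at only a constant-factor loss. Once these technicalities are settled, the double-counting inequality yields the stated bound $\tau_p(n,g) \leq C_\eps \tau_p(n-1,g)$ with $C_\eps$ depending only on $\eps$.
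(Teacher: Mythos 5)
Your surgery is genuinely different from the paper's, and the abundance step (Euler formula giving a linear supply of low-degree vertices) matches the first half of the paper's Lemma \ref{lem_good_pairs}. The paper, however, does \emph{not} delete a single vertex: it finds \emph{pairs} $(v_1,v_2)$ that are low-degree \emph{and} close in the dual metric, then flips the edges along a short dual path so that $v_1,v_2$ become adjacent, contracts the resulting edge and the two digons, and records the flipped edges as extra marking data. That is why $C_\eps$ in the paper has the shape $(36/\eps)^{24/\eps+3}$ — the ordered list of flips costs an exponential factor — whereas your plan, if it worked, would give the much better $O(1/\eps^2)$. The authors' choice of flips plus contractions is not an accident: these are unconditionally well-defined local moves in the type-I setting, which is exactly the point on which your proposal is thin.

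The gap is in the vertex-deletion step, and I do not think it is a "technicality to be settled." In a type-I triangulation, the link of a low-degree vertex $v$ need not be a simple cycle, and worse, a face $f$ in the star of $v$ can have two or even three of its corners at $v$ (this happens whenever $f$ is bounded by a loop at $v$). In that situation \emph{all three} sides of $f$ are incident to $v$: deleting $v$ and its incident edges and faces does not leave a hole bounded by a clean closed walk of edges of length $\deg(v)$ — some segments of the boundary degenerate to isolated vertices with no adjacent boundary edge, and the fan retriangulation is no longer the operation you describe, nor is the $(\Delta V,\Delta E,\Delta F)=(-1,-3,-2)$ bookkeeping correct. You propose to exclude such $v$ "at only a constant-factor loss," but this is not justified: nothing in the Euler argument prevents a positive proportion (in $n$) of the low-degree vertices from carrying loops or having non-simple links, since the number of loops can itself be $\Theta(n)$. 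A similar difficulty arises when $p$ is comparable to $n$: then the set of boundary vertices you discard can be a constant fraction of all vertices, and the claim that you still retain $\Omega(\eps n)$ usable vertices needs an argument (the paper's uniformity in $p$ is one of the delicate points of the lemma). Until these two points are addressed — either by proving the degenerate vertices are rare, or by replacing the delete-and-refan surgery with one that is unconditionally well-defined, as the paper does — the double-counting inequality does not follow.
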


For our future use, it will be important that the constant $C_{\eps}$ does not depend on $p$. The idea of the proof of Lemma \ref{ratio_lem} will be to find an "almost-injective" way to obtain a triangulation of $\T_p(n-1,g)$ from a triangulation of $\T_p(n,g)$. This will be done by merging two vertices together. For this, it will be useful to find two vertices that are quite close to each other and have a reasonnable degree. This is the point of the next result. We recall that for two vertices $v,v'$, the distance $d^*(v,v')$ is the length of the smallest dual path from $v$ to $v'$ that avoids the external faces. Let us fix $\eps>0$. We will call a pair $(v_1, v_2)$ of vertices \emph{good} if $\deg(v_{1})+\deg(v_{2})\leq \frac{12}{\eps}$ and $d^*(v_1,v_2) \leq \frac{24}{\eps}$.

\begin{lem}\label{lem_good_pairs}
In any triangulation of a polygon $t \in \mathcal{T}_p(n,g)$ with $\frac{g}{n} \leq \frac{1}{2}-\eps$, there are at least $\frac{\eps}{12} n$ good pairs of vertices.
\end{lem}

\begin{proof}
Fix a triangulation $t \in \T_p(n,g)$ with $\frac{g}{n} \leq \frac{1}{2}-\eps$. We first note that a positive proportion of the vertices have a small degree. Indeed, by the Euler formula, $t$ has $3n+3-p$ edges and $n+2-2g$ vertices, so the average degree of a vertex is
\[ \frac{2(3n+3-p)}{n+2-2g} \leq \frac{6n}{2\eps n}=\frac{3}{\eps}.\]
Therefore, at least half of the vertices have degree at most $\frac{6}{\eps}$. There are $n+2-2g \geq 2\eps n$ vertices in $t$, so at least $\eps n$ of them have degree not greater than $\frac{6}{\eps}$.

Let $v^1, \dots, v^{\eps n}$ be such vertices and, for every $1 \leq i \leq \eps n$, let $f^i$ be a face incident to $v^i$.
Since each face is incident to only $3$ vertices, the set $F=\{f^1, \dots, f^{\eps n}\}$ contains at least $\frac{\eps}{3}n$ faces. It is sufficient to find $\frac{\eps}{12}n$ pairs $(f,f') \in F^2$ with $d^*(f,f') \leq \frac{24}{\eps}$. This will follow from the fact that balls (for $d^*$) centered at the elements of $F$ must strongly overlap. More precisely, let $r=\frac{12}{\eps}$. Since the dual map $t^*$ is connected, for any $f \in F$, we have\footnote{Unless the number of faces of $t$ is smaller than $r$, in which case $B_r^*(f)=t^*$, so any pair of $F^2$ is good.} $|B_r^*(f)| \geq r$. Therefore, we have
\[\sum_{f \in F} \left| B^*_{r}(f) \right| \geq \frac{12}{\eps} \times \frac{\eps}{3}n = 4n,\]
whereas $|t^*|=2n-p+2<4n$. Hence, there must be an intersection between the balls $B^*_{r}(f)$, so there are $f_1, f'_1 \in F$ such that $d^*(f_1,f'_1) \leq 2r = \frac{24}{\eps}$, and the pair $(f_1, f'_1)$ is good. We set $F_1=F \backslash \{f_1\}$. We now try to find a good pair in $F_1$ and remove an element of this pair, and so on. Assume that $F_i$ is the set $F$ where $i$ elements have been removed. If $i<\frac{\eps}{12} n$, then we have
\[\sum_{f \in F_i} \left| B^*_{r}(f) \right| \geq \frac{12}{\eps} \left( \frac{\eps}{3} n-i \right) \geq 3n> |t^*|, \]
so $F_i$ contains a good pair. Therefore, the process will not stop before $i=\frac{\eps}{12}n$, so we can find $\frac{\eps}{12} n$ good pairs in $F^2$, which concludes the proof.
\end{proof}

We are now able to prove the bounded ratio lemma.

\begin{proof}[Proof of Lemma \ref{ratio_lem}]
Let $g \geq 0, n \geq 2$ be such that $\frac{g}{n}\leq\frac{1}{2}-\eps$. We will define an "almost-injection" $\Phi$ from $\T_p(n,g)$ to $\T_p(n-1,g)$. The input will be a triangulation $t \in\mathcal{T}_p(n,g)$ with a marked good pair $(v_{1},v_{2})$. By Lemma \ref{lem_good_pairs}, the number of inputs is at least
\begin{equation}\label{injection_number_inputs}
\frac{\eps}{12} n \tau_p(n,g).
\end{equation}

Given an input $(t, v_1, v_2)$, let $(f_{1},f_{2},\ldots,f_{j})$ be the shortest path in $t^*$ from $v_{1}$ to $v_{2}$. Since the pair $(v_1,v_2)$ is good, we have $j \leq \frac{24}{\eps}$. For all $i$, let $e_{i}$ be the edge separating $f_{i}$ from $f_{i+1}$. We flip $e_{1}$, then $e_{2}$ and so on up to $e_{j-1}$ (see Figure \ref{fig_bdd_ratio}). Note that these flips are always well defined, since the faces $f_i$ are pairwise distinct.

As we do so, we keep track of all the flipped edges and the order in which they come. All the edges that were flipped are now incident to $v_{1}$, and the last of them is also incident to $v_{2}$. We then contract this edge and merge $v_1$ and $v_2$ into a vertex $v$, which creates two digons incident to $v$, that we contract into two edges. Finally, we mark the vertex $v$ obtained by merging $v_1$ with $v_2$, and we also mark the two edges obtained by contracting the digons.

\begin{figure}
\center\includegraphics[scale=0.7]{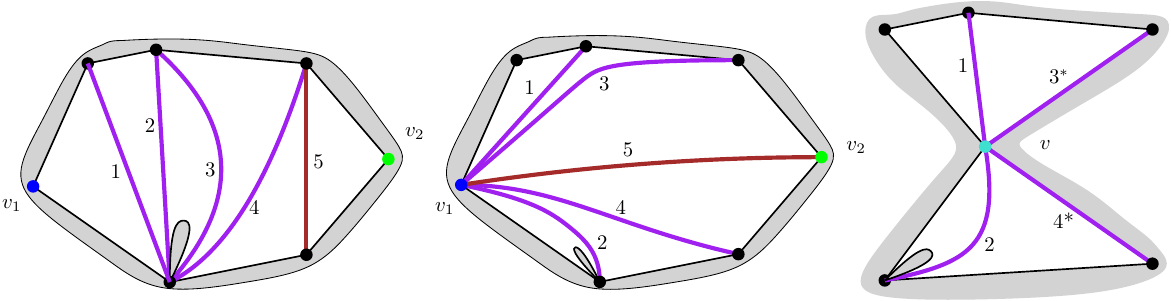}\caption{The injective mapping. On the left, a good pair and a path of triangles. In the center, the triangulation after the flips (we flipped the edges $1, 2, 3, 4, 5$ in this order). On the right, the final map, after contraction of the brown edge. The stars indicate the contracted digons.}\label{fig_bdd_ratio}
\end{figure}

These operations do not change the genus and the boundary length, and remove exactly $1$ vertex. Hence, the output of $\Phi$ is a triangulation of $\T_p(n-1,g)$, with a marked vertex $v$ of degree at most $\frac{36}{\eps}$ (since $\deg(v)<\deg(v_1)+\deg(v_2)+j$), two marked edges incident to $v$ and an ordered list of edges incident to $v$. Moreover, given the triangulation $\Phi(t)$, there are at most $n+2-2g \leq n+2$ possible values of $v$. Since $\deg(v) \leq \frac{36}{\eps}$, once $v$ is fixed, there are at most $\left( \frac{36}{\eps} \right)^2$ ways to choose the two marked edges and $\left( \frac{36}{\eps} \right)^{24/\eps}$ ways to choose the ordered list of edges. Hence, the number of possible outputs of $\Phi$ is at most
\begin{equation}\label{injection_number_outputs}
n \left( \frac{36}{\eps} \right)^{24/\eps+2} \tau_p(n-1,g).
\end{equation}
Finally, it is easy to see that $\Phi$ is injective: to go backwards, one just needs to duplicate the two marked edges, split $v$ in two between the two digons, and flip back the edges in the prescribed order. Therefore, by \eqref{injection_number_inputs} and \eqref{injection_number_outputs}, we obtain
\[ \frac{\eps}{12} n \tau_p(n,g) \leq n \left( \frac{36}{\eps} \right)^{24/\eps+2} \tau_p(n-1,g),\]
which concludes the proof with $C_{\eps}=\left( \frac{36}{\eps} \right)^{24/\eps+3}$.
\end{proof}

\subsection{Planarity and one-endedness}
\label{subsec_planar_one}

We now fix a sequence $(g_n)$ with $\frac{g_n}{n} \to \theta \in \left[0, \frac{1}{2} \right)$. As explained in Section \ref{subsec_local_topology}, the tightness of $(T_{n,g_n})$ for $d_{\loc}^*$ is immediate. Throughout this section, we will denote by $T$ a subsequential limit in distribution. It must be an infinite triangulation. We will first prove that $T$ is planar and one-ended, and then that its vertices have finite degrees. To establish planarity, the idea will be to bound, for any non-planar finite triangulation $t$, the probability that $t \subset T_{n,g_n}$ for $n$ large. For this, we will need the following combinatorial estimate.

\begin{lem}\label{lem_calcul_planarite}
Fix $k \geq 1$ and $m \in \Z$, numbers $\ell_{1}, \dots \ell_k \geq 1$ and perimeters $p_i^j \geq 1$ for $1 \leq j \leq k$ and $1 \leq i \leq \ell_j$. Then
\begin{equation}\label{eqn_lem_calcul_planarite}
\sum_{\substack{n_1+\dots+n_k=n+m \\ h_1+\dots+h_k=g_n-1-\sum_{j} (\ell_j-1)}} \prod_{j=1}^k \tau_{p^j_1, \dots, p^j_{\ell_j}}(n_j, h_j) = o \left( \tau(n,g_n) \right)
\end{equation}
when $n \to +\infty$.
\end{lem}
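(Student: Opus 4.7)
The strategy combines Lemma \ref{lem_fill_holes} with two uses of the Goulden--Jackson recursion \eqref{eqn_Goulden_Jackson}: one as an upper bound on convolutions, and one as a lower bound for a single $\tau$. The key idea is that the genus deficit $1 + L$, where $L := \sum_{j=1}^k (\ell_j - 1) \geq 0$, should cost a multiplicative factor of roughly $n^{2(L+1)}$ via iterated Goulden--Jackson, and this strictly dominates the polynomial losses collected elsewhere in the argument.

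\emph{Hole-filling and convolution bound.} By Lemma \ref{lem_fill_holes}, $\tau_{p^j_1, \ldots, p^j_{\ell_j}}(n_j, h_j) \leq (6 n_j)^{\ell_j - 1} \tau(n_j, h_j) \leq (Cn)^{\ell_j - 1} \tau(n_j, h_j)$, so the left-hand side of \eqref{eqn_lem_calcul_planarite} is at most $(Cn)^L \cdot \Sigma$, where $\Sigma$ denotes the same-shape sum with each $\tau_{p^j_1,\dots,p^j_{\ell_j}}$ replaced by $\tau$. To bound $\Sigma$, introduce $F_k(N, G) := \sum_{\sum n_j = N,\, \sum g_j = G} \prod_j f(n_j, g_j)$. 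From \eqref{eqn_Goulden_Jackson} one reads off $F_2(N, G) \leq \frac{N+3}{4(3N+8)} f(N+2, G) \leq \frac{1}{12} f(N+2, G)$, and a short induction on $k$ (using $F_k$ as a convolution of $f$ with $F_{k-1}$) gives $F_k(N, G) \leq C_k \, f(N + 2(k-1), G)$. Since $f(n,g) = (3n+2)\tau(n,g)$, this yields
\[
\Sigma \;\leq\; C'_k \cdot n \cdot \tau\bigl(n + m + 2(k-1),\; g_n - 1 - L\bigr).
\]

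\emph{Paying for the genus deficit.} Keeping only the first term of \eqref{eqn_Goulden_Jackson} provides the lower bound $\tau(n, g) \geq c\, n^2 \, \tau(n-2, g-1)$ for some constant $c > 0$ and $n$ large enough. Iterating this $L+1$ times from the point $(n + m + 2k + 2L, g_n)$ lands at $(n + m + 2(k-1), g_n - 1 - L)$, giving
\[
\tau\bigl(n + m + 2k + 2L,\; g_n\bigr) \;\geq\; c'\, n^{2(L+1)} \, \tau\bigl(n + m + 2(k-1),\; g_n - 1 - L\bigr).
\]
(If $g_n < L + 1$ then the sum in \eqref{eqn_lem_calcul_planarite} is empty and the estimate is trivial; otherwise the iteration is legitimate for $n$ large.) Since $m + 2k + 2L$ is a fixed integer and $g_n/n \to \theta < 1/2$, Lemma \ref{ratio_lem} yields $\tau(n + m + 2k + 2L, g_n) = O(\tau(n, g_n))$ when the shift is nonnegative; for negative shifts the same conclusion holds via a matching reverse estimate of the form $\tau(N, G) \leq O(1) \tau(N+3, G)$, obtained by isolating the single term $f(N, G) f(1, 0)$ inside the bound on $F_2(N+1, G)$ and dividing by the positive constant $f(1,0)$.

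Combining the above:
\[
\text{LHS of \eqref{eqn_lem_calcul_planarite}} \;\leq\; O(n^L) \cdot O(n) \cdot O(n^{-2(L+1)}) \, \tau(n, g_n) \;=\; O(n^{-L-1}) \, \tau(n, g_n) \;=\; o(\tau(n, g_n)),
\]
using $L \geq 0$. The main subtle point is matching the polynomial losses (the $n^L$ from Lemma \ref{lem_fill_holes} and the extra $n$ from the convolution bound) against the polynomial gain from the reverse Goulden--Jackson: the argument closes precisely because each unit of genus deficit is worth $n^2$, strictly more than the $n$ lost per boundary component, so that even the bare "$-1$" in the exponent (the case $L=0$) already forces the sum to be negligible.
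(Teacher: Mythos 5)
Your proof is correct and follows essentially the same route as the paper's: Lemma \ref{lem_fill_holes}, the crude bound $\tau \leq f$, iterated Goulden--Jackson to collapse the $k$-fold convolution into a single $f$ at genus $g_n - 1 - L$, a reverse use of the first Goulden--Jackson term to pay $n^{2(L+1)}$ for the genus deficit, and the bounded ratio lemma to finish. The only genuine addition is your reverse estimate $\tau(N,G)\leq O(1)\tau(N+3,G)$ (via the $f(N,G)f(1,0)$ term inside the convolution bound), which tidies up the negative-shift case that the paper handles by simply citing Lemma \ref{ratio_lem} --- a lemma that, as stated, only controls upward shifts in $n$ --- so your extra step is a worthwhile patch rather than a stylistic difference.
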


\begin{proof}
By Lemma \ref{lem_fill_holes}, the left-hand side of \eqref{eqn_lem_calcul_planarite} can be bounded by
\[ C \sum_{\substack{n_1+\dots+n_k=n+m \\ h_1+\dots+h_k=g_n-1-\sum_{j} (\ell_j-1)}} \prod_{j=1}^k n_j^{\ell_j-1} \tau(n_j, h_j) \]
for $C=6^{\sum(\ell_j-1)}$. We now use the crude bound $\tau(n_j, h_j) \leq f(n_j,h_j)$, where the numbers $f(n,g)=(3n+2)\tau(n,g)$ are those that appear in the Goulden--Jackson formula \eqref{eqn_Goulden_Jackson}. Since $\ell_j \geq 1$, we can bound $n_j^{\ell_j-1}$ by $2 n^{\ell_j-1}$ for $n\geq m$. The left-hand side of \eqref{eqn_lem_calcul_planarite} is then bounded by
\[C n^{\sum_{j=1}^k (\ell_j-1)} \sum_{\substack{n_1+\dots+n_k=n+m \\ h_1+\dots+h_k=g_n-1-\sum_{j} (\ell_j-1)}} \prod_{j=1}^k f(n_j, h_j).\]
Moreover, the Goulden--Jackson formula implies that
\begin{equation}\label{eqn_GJ_consequence_1}
\sum_{\substack{n_1+n_2=n \\ h_1+h_2=g}} f(n_1, h_1) f(n_2, h_2) \leq f(n+2,g)
\end{equation}
for any $n,g \geq 0$. By an easy induction on $k$, we obtain
\[\sum_{\substack{n_1+\dots+n_k=n \\ h_1+\dots+h_k=g}} \prod_{j=1}^k f(n_j, h_j) \leq f(n+2k-2,g).\]
Therefore, we can bound the left-hand side of \eqref{eqn_lem_calcul_planarite} by
\begin{equation}\label{gros_calcul_intermediaire}
C n^{\sum_{j=1}^k (\ell_j-1)} f \left( n+m+2k-2, g_n-1-\sum_{j=1}^k (\ell_j-1) \right).
\end{equation}
The Goulden--Jackson formula implies that 
\begin{equation}\label{eqn_GJ_consequence_2}
f(n-2,g-1) \leq n^{-2} f(n,g)
\end{equation}
for any $n$ and $g$, so $f(n,g-i) \leq n^{-2i} f(n+2i,g)$ for any $n$ and $1 \leq i \leq g$. Therefore, from \eqref{gros_calcul_intermediaire}, we obtain the asymptotic bound
\[C n^{\sum_{j=1}^k (\ell_j-1)} n^{-2-2\sum_{j=1}^k (\ell_j-1)} f \left( n+m+2k+2\sum (\ell_j-1) ,g_n \right) \leq C' n^{-2 -\sum_{j} (\ell_j-1)} f(n,g_n),\]
where in the end we use Lemma \ref{ratio_lem} (which results in a change in the constant). In particular, the left-hand side of \eqref{eqn_lem_calcul_planarite} is $o \left( \frac{f(n,g_n)}{n} \right)$, so it is $o \left( \tau(n,g_n) \right)$.
\end{proof}

\begin{corr}\label{conclusion_planarite}
Every subsequential limit of $\left( T_{n,g_n} \right)$ for $d_{\loc}^*$ is a.s. planar.
\end{corr}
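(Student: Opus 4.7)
The plan is to show that for every fixed non-planar finite triangulation with holes $t$ one has $\P(t \subset T) = 0$; since the set of such $t$ (up to rooted isomorphism) is countable, the conclusion then follows by the union bound. Fix such a $t$; being non-planar, it satisfies $g(t) \geq 1$.

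First I would pass the probability to the finite level. For $r$ large enough (depending on $t$), the event $\{t \subset T\}$ depends only on $B_r^*(T)$, and is therefore clopen in $(\overline{\T}^*, d_{\loc}^*)$. By the Portmanteau theorem applied along a subsequence $(n_k)$ with $T_{n_k, g_{n_k}} \to T$ in distribution for $d_{\loc}^*$,
\[ \P(t \subset T) = \lim_k \P\bigl(t \subset T_{n_k, g_{n_k}}\bigr). \]

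Next I would expand the right-hand side combinatorially. Any $T_{n, g_n}$ containing $t$ is obtained by gluing a filling of the $\ell$ holes of $t$; this filling splits into $k$ connected components, each a triangulation of a multipolygon, and is indexed by a partition $\pi$ of the holes into subsets with perimeter vectors $(p_1^j, \ldots, p_{\ell_j}^j)$ for $1 \leq j \leq k$. A short Euler-formula computation forces $\sum_j n_j = n + m_\pi$ for a constant $m_\pi$ depending on $t$ and $\pi$, together with
\[ \sum_{j=1}^k h_j = g_n - g(t) - \sum_{j=1}^k (\ell_j - 1). \]
Consequently,
\[ \P\bigl(t \subset T_{n, g_n}\bigr) = \frac{1}{\tau(n, g_n)} \sum_\pi \sum_{(n_j, h_j)} \prod_{j=1}^k \tau_{p_1^j, \ldots, p_{\ell_j}^j}(n_j, h_j), \]
where the inner sum ranges over $(n_j, h_j)$ satisfying the two constraints above.

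Finally, for each fixed $\pi$ I would apply Lemma \ref{lem_calcul_planarite}, noting that the genus constraint carries a ``$-g(t)$'' in place of the ``$-1$'' appearing there. Inspecting the proof of that lemma, one may iterate the estimate $f(n, g-i) \leq n^{-2i} f(n+2i, g)$ a further $g(t) - 1$ times, gaining an extra factor $n^{-2(g(t)-1)}$ and yielding the stronger bound $O\bigl(n^{-2g(t) - \sum_j (\ell_j - 1)} \tau(n, g_n)\bigr) = o(\tau(n, g_n))$. Since only finitely many partitions $\pi$ of the $\ell$ holes exist, summing over $\pi$ preserves the $o(\tau(n, g_n))$, so $\P(t \subset T_{n, g_n}) \to 0$ and hence $\P(t \subset T) = 0$. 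The main technical content sits inside Lemma \ref{lem_calcul_planarite} and the bounded ratio lemma; the extra unit of genus $g(t) \geq 1$ carried by a non-planar $t$ is exactly what supplies the decisive factor of $n^{-2}$ needed to beat $\tau(n, g_n)$.
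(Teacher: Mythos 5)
Your argument is correct and follows the same skeleton as the paper's proof: reduce to a claim about $\P(t \subset T_{n,g_n}) \to 0$ for a fixed finite $t$, decompose over the partition $\pi$ of the holes induced by the connected components of the complement, track the genus bookkeeping via the Euler formula, and conclude with Lemma \ref{lem_calcul_planarite}. The one genuine divergence is how you handle the genus of $t$. The paper first reduces to the case $g(t) = 1$ by a soft exploration argument: adding triangles one at a time can raise the genus of the explored part by at most $1$ per step, so a non-planar $T$ must contain some genus-exactly-$1$ triangulation with holes; this allows applying Lemma \ref{lem_calcul_planarite} verbatim, with its built-in $-1$ in the genus constraint. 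You instead handle arbitrary $g(t) \geq 1$ directly, which forces you to extend Lemma \ref{lem_calcul_planarite}: the genus constraint becomes $g_n - g(t) - \sum_j(\ell_j-1)$, and you correctly observe that iterating the Goulden--Jackson inequality $f(n,g-i) \leq n^{-2i} f(n+2i,g)$ a further $g(t)-1$ times supplies the compensating power of $n$. This is a harmless generalization (the extra iterations cost only a bounded number of extra applications of the bounded ratio lemma), but it does mean you cannot cite the lemma as stated; you have to reprove a mild variant. The paper's reduction to $g(t)=1$ buys a cleaner modular structure at the price of one extra topological observation, while your route is slightly more self-contained but technically heavier at the lemma-application step. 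Both are sound.

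One very small stylistic caveat: the bound you quote, $O\bigl(n^{-2g(t)-\sum_j(\ell_j-1)}\tau(n,g_n)\bigr)$, should more precisely be stated with $f(n,g_n)$ in place of $\tau(n,g_n)$; since $f(n,g_n) = (3n+2)\tau(n,g_n)$ this costs one extra factor of $n$, and it is exactly here that $g(t) \geq 1$ is needed to make the exponent strictly negative after the conversion. You do use that hypothesis, so the conclusion stands, but the intermediate expression as written is off by this factor.
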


\begin{proof}
If a subsequential limit $T$ is not planar, then we can find a finite triangulation $t$ with holes and with genus $1$ such that $t \subset T$. Indeed, if we explore $T$ triangle by triangle, the genus may only increase by at most $1$ at each step, so if the genus is positive at some point during the exploration, it must be $1$ at some point. Therefore, it is enough to prove that for any such triangulation $t$, we have
\[\P \left( t \subset T_{n, g_n} \right) \xrightarrow[n \to +\infty]{} 0.\]

If $t \subset T_{n,g_n}$, let $T^1, \dots, T^k$ be the connected components of $T_{n,g_n} \backslash t$. These components define a partition of the set of holes of $t$, where a hole $h$ is in the $j$-th class if $T^j$ is the connected component glued to $h$ (for example, on Figure \ref{fig_t_subset_T}, the three classes have sizes $2$, $1$ and $1$).
Note that the number of possible partitions is finite and depends only on $t$ (and not on $n$). Therefore, it is enough to prove that for any partition $\pi$ of the set of holes of $t$, we have
\begin{equation}\label{eqn_planarite_partition}
\P \left( \mbox{$t \subset T_{n, g_n}$ and the partition defined by $T_{n,g_n}$ is $\pi$} \right) \xrightarrow[n \to +\infty]{} 0.
\end{equation}
If this occurs, for each $j$, let $\ell_j$ be the number of holes of $T$ glued to $T^j$ and let $p^j_1, \dots, p^j_{\ell_j}$ be the perimeters of these holes. Then the connected component $T^j$ is a triangulation of the $(p^j_1, \dots, p^j_{\ell_j})$-gon (see Figure \ref{fig_t_subset_T}). Moreover, if $T_j$ has genus $h_j$, then the total genus of $T_{n, g_n}$ is equal to
\[ 1+\sum_{j=1}^k h_j + \sum_{j=1}^k (\ell_j-1),\]
so this sum must be equal to $g_n$, so
\[ \sum_{j=1}^k h_j =g_n-1-\sum_{j=1}^k (\ell_j-1). \]
Moreover, let $n_j$ be such that $T^j$ belongs to $\T_{p^j_1, \dots, p^j_{\ell_j}}(n_j,h_j)$. An easy computation shows that
\[ \sum_{j=1}^k n_j=n+m\]
with
\[ m=\frac{1}{2} \left( -|F(t)|+\sum_{j=1}^k \sum_{i=1}^{\ell_j} (p_i^j-2) \right) \in \Z,\]
where $|F(t)|$ is the number of triangles of $t$.

Therefore, the number of triangulations $T \in \T(n,g_n)$ such that $t \subset T$ and the resulting partition of the holes is equal to $\pi$ is the number of ways to choose, for each $j$, a triangulation of the $(p^j_1, \dots, p^j_{\ell_j})$-gon, such that the total genus of these triangulations is $g_n-1-\sum_{j=1}^k (\ell_j-1)$, and their total size is $n+m$. This is equal to the left-hand side of Lemma \ref{lem_calcul_planarite}, so \eqref{eqn_planarite_partition} is a consequence of Lemma \ref{lem_calcul_planarite}, which concludes the proof.
\end{proof}

The proof of one-endedness will be similar, but the combinatorial estimate that is needed is slightly different.

\begin{lem}\label{lem_calcul_oneended}
\begin{itemize}
\item
Fix $k \geq 1$, $m \in \Z$, numbers $\ell_{1}, \dots \ell_k \geq 1$ that are not all equal to $1$, and perimeters $p_i^j \geq 1$ for $1 \leq j \leq k$ and $1 \leq i \leq \ell_j$. Then
\begin{equation}\label{eqn_oneended_1}
\sum_{\substack{n_1+\dots+n_k=n+m \\ h_1+\dots+h_k=g_n-\sum_{j} (\ell_j-1)}} \prod_{j=1}^k \tau_{p_1^j, \dots, p_{\ell_j}^j}(n_j, h_j) = o \left( \tau(n,g_n) \right).
\end{equation} 
\item
Fix $k \geq 2$, $m \in \Z$ and perimeters $p_1, \dots, p_k$. There is a constant $C$ such that, for every $a$ and $n$, we have
\begin{equation}\label{eqn_oneended_2}
\sum_{\substack{n_1+\dots+n_k=n+m \\ h_1+\dots+h_k=g_n \\ n_1, n_2 >a}} \prod_{j=1}^k \tau_{p_j}(n_j, h_j) \leq \frac{C}{a} \tau(n,g_n).
\end{equation}
\end{itemize}
\end{lem}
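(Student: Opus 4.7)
The plan is to follow the general strategy of the proof of Lemma \ref{lem_calcul_planarite}, but with more careful bookkeeping to compensate for the absence of the ``$-1$'' shift in the genus constraint (which in Lemma \ref{lem_calcul_planarite} supplied an extra factor of $n^{-2}$ via one additional iteration of the Goulden--Jackson estimate $f(n, g-1) \leq n^{-2} f(n+2, g)$).

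For part (1), I will apply Lemma \ref{lem_fill_holes} in the sharper form $\tau_{p^j_1, \dots, p^j_{\ell_j}}(n_j, h_j) \leq (6 n_j)^{\ell_j - 1} \tau(n_j, h_j)$ (using $n_j$, not the global $n$), and use $\tau(n_j, h_j) = f(n_j, h_j)/(3 n_j + 2)$ to rewrite
\[ \tau_{p^j_1, \dots, p^j_{\ell_j}}(n_j, h_j) \leq \frac{(6 n_j)^{\ell_j - 1}}{3 n_j + 2}\, f(n_j, h_j) \leq C_{\ell_j}\, n_j^{\max(\ell_j - 2,\, 0)}\, f(n_j, h_j). \]
The crucial point is that the $n_j$-factor from Lemma \ref{lem_fill_holes} is entirely absorbed whenever $\ell_j \in \{1, 2\}$. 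After taking the product and applying iterated Goulden--Jackson ($\sum \prod f(n_j, h_j) \leq f(n + m + 2k - 2, g_n - S)$ with $S = \sum_j(\ell_j - 1)$), then the estimate $f(n, g - i) \leq C f(n + 2i, g)/n^{2i}$ with $i = S$, and finally the bounded ratio (Lemma \ref{ratio_lem}) together with $f = (3n + 2)\tau$, one obtains a bound of the form $C\, n^{1 - S - \#\{j : \ell_j \geq 2\}} \tau(n, g_n)$. Since the hypothesis ``not all $\ell_j = 1$'' gives both $S \geq 1$ and $\#\{j : \ell_j \geq 2\} \geq 1$, the exponent is at most $-1$ and the bound is $o(\tau(n, g_n))$.

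The delicate case is $S = 1$ (one component with two boundaries, all others with a single boundary): using the crude form of Lemma \ref{lem_fill_holes} with a uniform $n$ in place of $n_j$, as in the proof of Lemma \ref{lem_calcul_planarite}, would only give $O(\tau(n, g_n))$, not $o(\tau(n, g_n))$. Replacing $n$ by $n_j$ and cancelling against the $3 n_j + 2$ denominator of $\tau = f/(3n+2)$ is what saves exactly the factor of $n$ one needs; this is really the only new point relative to Lemma \ref{lem_calcul_planarite}.

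For part (2), all $\ell_j = 1$ and there is no genus shift, so the calculation is simpler. Writing $\tau_{p_j}(n_j, h_j) \leq f(n_j, h_j)/(3 n_j + 2)$ for each $j$ and noting that by pigeonhole some index $j^*$ satisfies $n_{j^*} \geq (n+m)/k$, I will perform a short case split on whether $j^* \in \{1, 2\}$ or $j^* \geq 3$. In both cases, the denominator at the dominant index yields a factor $C/n$, and the denominators at each index in $\{1, 2\}$ other than $j^*$ yield a factor $C/a$ (using $n_1, n_2 > a$); all other denominators are bounded by a constant. After iterated Goulden--Jackson and the bounded ratio $f(n + O(1), g_n) \leq C n\, \tau(n, g_n)$, one obtains $O(\tau(n, g_n)/a)$ in the case $j^* \in \{1, 2\}$ and $O(\tau(n, g_n)/a^2)$ in the case $j^* \geq 3$; the latter is absorbed into $O(\tau(n, g_n)/a)$ since $a \geq 1$.
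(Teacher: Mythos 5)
Your proposal is correct and matches the paper's proof essentially step by step: for part (1) the paper likewise invokes Lemma~\ref{lem_fill_holes} together with $\tau(n_j,h_j)\leq f(n_j,h_j)/n_j$ to recover the extra factor of $n^{-1}$ that Lemma~\ref{lem_calcul_planarite} got for free from the genus shift, and for part (2) it makes the same pigeonhole observation, packaged as $\prod_j n_j \geq (n+m)a/k$ rather than a case split, before applying iterated Goulden--Jackson and the bounded ratio lemma in both parts. Your exponent $1-S-q$ with $q=\#\{j:\ell_j\geq 2\}$ is marginally sharper than the paper's effective exponent $-S$ (the paper keeps the $n_j^{-1}$ saving only for a single WLOG-chosen index with $\ell_j\geq 2$), but both yield $o(\tau(n,g_n))$.
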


\begin{proof}
We start with the first point. The proof is very similar to the proof of Lemma \ref{lem_calcul_planarite}, with the following difference: here, the sum of the genuses differs by one, so we will lose a factor $n^2$ in the end of the computation. This forces us to be more careful in the beginning and to use the assumption that the $\ell_j$ are not all equal to $1$.

More precisely, by using Lemma \ref{lem_fill_holes} and the bound $\tau(n,g) \leq \frac{1}{n} f(n,g)$, as in the proof of Lemma \ref{lem_calcul_planarite}, the left-hand side of \eqref{eqn_oneended_1} can be bounded by
\[ C \sum_{\substack{n_1+\dots+n_k=n+m \\ h_1+\dots+h_k=g_n-\sum_{j} (\ell_j-1)}}  \left( \prod_{j=1}^k n_j^{\ell_j-2} \right) \left( \prod_{j=1}^k f(n_j,h_j) \right),\]
where $C$ does not depend on $n$.
Without loss of generality, assume that $\ell_1 \geq 2$. Then we have $n_1^{\ell_1-2} \leq (n+m)^{\ell_1-2}$. Moreover, for every $j \geq 2$, we have $n_j^{\ell_j-2} \leq n_j^{\ell_j-1} \leq (n+m)^{\ell_j-1}$ since $\ell_j \geq 1$. Therefore, we obtain for $n \geq m$:
\[ \prod_{j=1}^k n_j^{\ell_j-2} \leq 2 n^{\sum_j (\ell_j-1)-1}.\]
By using this and the Goulden--Jackson formula in the same way as in the proof of Lemma \ref{lem_calcul_planarite}, we obtain the bound
\[ C n^{-1-\sum_{j} (\ell_j-1)} f \left( n+m+2k-2, g_n \right) \leq 
C' n^{-1-\sum_{j} (\ell_j-1)} f(n,g_n),\]
where the last inequality follows from the bounded ratio lemma. Since $\ell_1 \geq 2$, we have $\sum_j (\ell_j-1) \geq 1$, so this is $o \left( \frac{f(n,g_n)}{n} \right)$ and we get the result.

We now prove the second point. As in the first case (but with $\ell_j=1$ for every $j$), the left-hand side can be bounded by
\[C \sum_{\substack{n_1+\dots+n_k=n+m \\ h_1+\dots+h_k=g_n \\ n_1, n_2 >a}} \left( \prod_{j=1}^k \frac{1}{n_j} \right) \left( \prod_{j=1}^k f(n_j, h_j) \right).\]

Moreover, if $n_1, n_2>a$, then at least one of the $n_j$ is larger than $\frac{n+m}{k}$ and two are larger than $a$, so $\prod_{j=1}^k n_j \geq \frac{(n+m)a}{k}$, so we obtain the bound (for $n$ large enough, with $C'$ and $C''$ independent of $n$ and $a$)
\begin{align*}
\frac{C'}{an} \sum_{\substack{n_1+\dots+n_k=n+m \\ h_1+\dots+h_k=g_n \\ n_1, n_2 > a}} \prod_{j=1}^k f(n_j, h_j) & \leq \frac{C'}{an} \sum_{\substack{n_1+\dots+n_k=n+m \\ h_1+\dots+h_k=g_n}} \prod_{j=1}^k f(n_j, h_j)\\
& \leq \frac{C'}{an} f(n+m+2k-2,g_n)\\
& \leq \frac{C''}{an} f(n,g_n)\\
& \leq \frac{C''}{a} \tau(n,g_n),
\end{align*}
where we use the Goulden--Jackson formula to reduce the sum and finally the bounded ratio lemma, in the same way as previously.
\end{proof}

\begin{corr}\label{conclusion_oneended}
Every subsequential limit $T$ of $\left( T_{n,g_n} \right)$ for $d_{\loc}^*$ is a.s. one-ended in the sense that, for every finite triangulation $t$ with holes such that $t \subset T$, only one hole of $t$ is filled with infinitely many faces\footnote{This is a "weak" definition of one-endedness, since it does not prevent $T$ to be the dual of a tree. However, once we will have proved that $T$ has finite vertex degrees, this will be equivalent to the usual definition.}.
\end{corr}

\begin{proof}
The proof is quite similar to the proof of Corollary \ref{conclusion_planarite}, but with Lemma \ref{lem_calcul_oneended} playing the role of Lemma \ref{lem_calcul_planarite}.

More precisely, if a subsequential limit $T$ is not one-ended with positive probability, it contains a finite triangulation $t$ such that two of the connected components of $T \backslash t$ are infinite. This means that we can find $\eps>0$, a triangulation $t$ and two holes $h_1, h_2$ of $t$ such that, for every $a>0$,
\begin{equation}\label{eqn_oneended_t}
P \left( \mbox{$t \subset T$ and $T \backslash t$ has two connected components with at least $a$ faces} \right) \geq \eps.
\end{equation}
By Corollary \ref{conclusion_planarite}, we can assume that $t$ is planar. 
If this holds, then $T$ contains a finite triangulation obtained by starting from $t$ and adding $a$ faces in the hole $h_1$ and $a$ faces in the hole $h_2$. We denote by $t^{a,a}$ the set of such triangulations. Then \eqref{eqn_oneended_t} means that for any $a>0$, for $n$ large enough, we have
\begin{equation}\label{eqn_oneended_finite}
\P \left( \mbox{$T_{n, g_n}$ contains a triangulation of $t^{a,a}$} \right) \geq \eps.
\end{equation} 
This can occur in two different ways, which will correspond to the two items of Lemma \ref{lem_calcul_oneended}:
\begin{itemize}
\item[(i)]
either at least one connected component of $T_{n, g_n} \backslash t$ is glued to at least two holes of $t$,
\item[(ii)]
or the $k$ holes of $t$ correspond to $k$ connected components $T^1, \dots, T^k$, where $T^1$ and $T^2$ have size at least $a$.
\end{itemize}
In case (i), the connected components of $T_{n,g_n}$ are triangulations of multi-polygons, at least one of which has two boundaries. The proof that the probability of this case goes to $0$ is now the same as the proof of Corollary \ref{conclusion_planarite}, but we use the first point of Lemma \ref{lem_calcul_oneended}. Note that the assumption the $\ell_j$ are not all $1$ comes from the fact that one of the connected components is glued to two holes. Moreover, the sum of the genuses of the $T^j$ is $g-\sum_j (\ell_j-1)$ and not $g-1-\sum_j (\ell_j-1)$ because this time $t$ has genus $0$ and not $1$.

Similarly, in case (ii), the $k$ holes of $t$ must be filled with $k$ triangulations of a single polygon, two of which have at least $a$ faces, so they belong to a set of the form $\T_{p_j}(n_j,h_j)$ with $n_j \geq \frac{a}{2}$ if $a$ is large enough compared to the perimeters of the holes. Hence, the second point of Lemma \ref{lem_calcul_oneended} allows to bound the number of ways to fill these holes. We obtain that, for $a$ large enough, we have
\[ \P \left( \mbox{$T_{n, g_n}$ contains a triangulation of $t^{a,a}$} \right) \leq o(1)+\frac{2C}{a} \]
as $n \to +\infty$, where $o(1)$ comes from case (i) and $\frac{2C}{a}$ from case (ii). This contradicts \eqref{eqn_oneended_finite}, so $T$ is a.s. one-ended.
\end{proof}

\subsection{Finiteness of the degrees}

Our goal is now to prove tightness for $d_{\loc}$. As before, let $(g_n)$ be a sequence with $\frac{g_n}{n} \to \theta \in \left[ 0,\frac{1}{2} \right)$.

\begin{prop}\label{prop_tightness_dloc}
The sequence $(T_{n,g_n})$ is tight for $d_{\loc}$.
\end{prop}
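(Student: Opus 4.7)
Since $(\overline{\T}^*, d_{\loc}^*)$ is compact, the sequence $(T_{n, g_n})$ is automatically tight for $d_{\loc}^*$, and by Corollaries \ref{conclusion_planarite} and \ref{conclusion_oneended} any subsequential limit $T$ in distribution for $d_{\loc}^*$ is almost surely planar and one-ended. By Lemma \ref{lem_dual_convergence} combined with Skorokhod's representation theorem, tightness of $(T_{n, g_n})$ for $d_{\loc}$ will follow once we prove that any such limit $T$ almost surely lies in $\overline{\T}$, i.e. has all vertex degrees finite. Since the dual ball $B_R^*(T)$ contains at most $3 \cdot 2^R$ faces (the dual of a triangulation being $3$-regular) and every vertex of $T$ lies in some $B_R^*(T)$, a union bound over $R$ reduces this to a uniform-in-$n$ tail bound on the maximum degree inside a fixed dual ball of $T_{n, g_n}$.

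The core quantitative input I would aim for is a uniform tail bound on the root degree: for every $\epsilon > 0$ there exists $\phi(k) \to 0$ such that
\[
\P\left(\deg_{T_{n, g_n}}(\rho) \geq k\right) \leq \phi(k)
\]
for every $n$ with $g_n/n \leq 1/2 - \epsilon$. Given this, the transfer to the maximum degree inside $B_R^*(T_{n, g_n})$ is purely combinatorial: decomposing the uniform rooted triangulation as (uniform unrooted) $\times$ (uniform oriented edge $\vec e$), one has $\sum_{\vec e} \mathbf{1}(v \in B_R^*(T_0, \vec e)) \leq C_R \deg(v)$ for every vertex $v$, since only oriented edges in faces within dual distance $R$ of $v$ can place $v$ in the dual ball. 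This yields $\mathbb{E}\bigl[\#\{v \in B_R^*(T_{n, g_n}) : \deg(v) \geq k\}\bigr] \leq C_R \phi(k)$, and a Markov step gives $\P\bigl(\exists v \in B_R^*(T_{n, g_n}): \deg(v) \geq k\bigr) \leq C_R \phi(k) \to 0$.

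To establish the key tail bound, I would adapt the Angel--Schramm tightness argument \cite{AS03} to the high-genus setting, substituting the bounded ratio lemma (Lemma \ref{ratio_lem}) together with the Goulden--Jackson recurrence \eqref{eqn_Goulden_Jackson} for the precise planar asymptotics used in \cite{AS03}. For any planar finite triangulation $t$ with a single hole of perimeter $p$ and $v$ vertices,
\[
\P(t \subset T_{n, g_n}) = \frac{\tau_p\bigl(n - (v - p), g_n\bigr)}{\tau(n, g_n)} \leq \frac{\tau\bigl(n - (v - p), g_n\bigr)}{\tau(n, g_n)}
\]
by Lemma \ref{lem_fill_holes}. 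Keeping the trivial planar component term $f(n-2, g_n) f(0, 0)$ in the Goulden--Jackson recurrence yields a lower bound $\tau(n, g_n) \geq c\,n\,\tau(n-2, g_n)$, which iterates to $\tau(n-m, g_n)/\tau(n, g_n) = O\bigl((cn)^{-m/2}\bigr)$ for $m \leq n/2$; hence $\P(t \subset T_{n, g_n}) = O\bigl(n^{-(v-p)/2}\bigr)$. Summing this estimate over all planar triangulations $t$ with a single hole in which the root vertex has degree at least $k$, with the number of such $t$ at fixed $(v, p)$ bounded by the classical exponential enumeration of planar triangulations of a polygon, then delivers $\phi(k) \to 0$.

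The main obstacle is quantitative: without precise asymptotics for $\tau(n, g_n)$, one must check that the rough Goulden--Jackson lower bound, iterated $v - p$ times, produces sufficient decay to overpower the exponential enumeration of admissible local fans of root degree $\geq k$, uniformly in $n$. A secondary technicality is the type-I feature (loops at $\rho$), which slightly complicates the combinatorics of stars around the root vertex and may require splitting the sum according to the number of loops.
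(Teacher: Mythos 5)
Your reduction to showing that any $d_{\loc}^*$-subsequential limit $T$ a.s.\ has finite degrees (via Lemma~\ref{lem_dual_convergence} and Skorokhod) is the same as the paper's, and the dual-ball counting step transferring a uniform root-degree tail bound to a max-degree bound is a valid manoeuvre. However, your route for establishing the tail bound $\P(\deg_{T_{n,g_n}}(\rho)\geq k)\leq \phi(k)$ contains a concrete error, and I think the approach as a whole cannot be salvaged.

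The error is in the claim that keeping a ``trivial planar component term'' in the Goulden--Jackson recurrence yields $\tau(n,g_n)\geq c\,n\,\tau(n-2,g_n)$, and hence $\tau(n-m,g_n)/\tau(n,g_n)=O\bigl((cn)^{-m/2}\bigr)$. In \eqref{eqn_Goulden_Jackson}, the factor $n(3n-2)$ multiplies $f(n-2,g-1)$, i.e.\ it appears only in the \emph{genus-lowering} term; the size-only-lowering terms are of the form $f(n_1,g_1)f(n_2,g_2)$ with no extra factor of $n$. Keeping, say, the $n_2=-1$ term gives $f(n,g)\geq \frac{4(3n+2)}{n+1}f(n-1,g)$, hence only $\tau(n-1,g_n)/\tau(n,g_n)\leq C$ for a \emph{constant} $C$ (roughly $1/12$). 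Iterating gives $\tau(n-m,g_n)/\tau(n,g_n)=O(C^m)$, not $O((cn)^{-m/2})$; indeed no polynomial-in-$n$ decay is possible, since by Theorem~\ref{main_thm} this ratio converges to $\lambda(\theta)^m>0$. With only the correct exponential rate, the union bound over finite fans of root degree $\geq k$ and $v-p$ inner vertices fails: the number of planar triangulations of a $p$-gon with $v-p$ inner vertices grows like $(12\sqrt 3)^{v-p}$, which dominates $12^{v-p}$, so the sum over $v$ diverges. This is exactly the obstacle you flag at the end, and it is not a technicality one can finesse with these inputs.

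The paper avoids this issue entirely. Rather than seeking a quantitative tail bound on $\deg(\rho)$, it runs a filled-in peeling exploration around $\rho$ directly in the subsequential limit $T$ and shows (Lemma~\ref{lem_root_degree_is_finite}) that the conditional probability of swallowing the root within the next two peeling steps is bounded below by $1/C_\eps$, a consequence of the bounded ratio lemma applied to the limit of the ratios $\P(t^+\subset T_{n,g_n})/\P(t\subset T_{n,g_n})$. This forces the exploration to terminate a.s., hence $\deg_T(\rho)<\infty$ a.s.\ without any control on the volumes of the filled-in holes. Finiteness is then propagated to all vertices by stationarity and reversibility under the simple random walk, exactly as in \cite{AS03}, rather than by your dual-ball counting argument. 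Note that the paper explicitly remarks after Lemma~\ref{lem_root_degree_is_finite} that this method gives an exponential tail on the number of peeling steps but \emph{not} a quantitative bound on $\deg(\rho)$; so the uniform tail $\phi(k)$ you postulate is neither proved nor needed in the paper.
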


Let $T$ be a subsequential limit of $(T_{n,g_n})$ for $d_{\loc}^*$. By Lemma \ref{lem_dual_convergence}, to finish the proof of tightness for $d_{\loc}$, we only need to show that almost surely, all the vertices of $T$ have finite degrees. As in \cite{AS03}, we will first study the degree of the root vertex, and then extend finiteness by using invariance under the simple random walk. The main difference with \cite{AS03} is that, while \cite{AS03} uses exact enumeration results, we will rely on the bounded ratio lemma.

\begin{lem}\label{lem_root_degree_is_finite}
The root vertex of $T$ has a.s. finite degree.
\end{lem}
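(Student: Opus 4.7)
The plan is to prove uniform tightness of the root-vertex degree in the sequence $(T_{n,g_n})_n$ and transfer this tightness to $T$ via a semi-continuity argument. Note that the link of the root vertex in any triangulation is a dual cycle whose length equals the degree of the root, so the event $\{\deg(\rho)\geq K\}$ is measurable in terms of the dual ball of radius $\lceil K/2\rceil$ around the root face, hence open in the $d^*_{\loc}$ topology. By Portmanteau,
\[
\P(\deg_T(\rho)\geq K) \;\leq\; \liminf_n \P(\deg_{T_{n,g_n}}(\rho)\geq K),
\]
so it will suffice to prove that $\sup_n \P(\deg_{T_{n,g_n}}(\rho)\geq K)\to 0$ as $K\to\infty$.

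For the uniform bound I would adapt the argument of \cite{AS03}, replacing the exact asymptotic enumeration of planar triangulations used there by the bounded ratio lemma (Lemma \ref{ratio_lem}). If $\deg_{T_{n,g_n}}(\rho)=m$, then the $m$ triangles incident to the root, together with their edges and vertices, form a triangulation with holes $t$ with $t\subset T_{n,g_n}$. Decomposing the event according to $t$ gives
\[
\P(\deg_{T_{n,g_n}}(\rho)\geq K) \;\leq\; \sum_{t:\,|t|\geq K}\frac{\tau_{p_1,\ldots,p_\ell}(n',g')}{\tau(n,g_n)},
\]
where $n'$, $g'$ and the hole perimeters $p_i$ are determined by $t$ through the Euler formula. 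Each term would then be estimated using the same toolkit as in Lemmas \ref{lem_calcul_planarite}--\ref{lem_calcul_oneended}: apply Lemma \ref{lem_fill_holes} to replace $\tau_{\vec p}(n',g')$ by $(6n')^{\ell-1}\tau(n',g')$, pass to the Goulden--Jackson sequence $f(n,g)=(3n+2)\tau(n,g)$ and absorb any genus drop $g'<g_n$ via the inequality $f(n,g-1)\leq n^{-2}f(n+2,g)$, and finally use the bounded ratio lemma to express the result as a controlled multiple of $\tau(n,g_n)$.

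The hard part will be controlling the sum over all star shapes $t$ of a given size $m$: the number of such shapes can grow exponentially in $m$, whereas the bounded ratio lemma alone only saves a constant factor per unit of size change, so the decay in $m$ required to beat the combinatorial count cannot be extracted from size considerations alone. The additional decay has to come from the $n^{-2}$ penalty produced by the Goulden--Jackson step whenever the star carries nontrivial genus, making non-planar star shapes asymptotically negligible; combined with Corollaries \ref{conclusion_planarite} and \ref{conclusion_oneended}, which guarantee that any subsequential limit is planar and one-ended, the dominant contribution reduces to simple disk-like stars, whose probabilities sum trivially to at most one. Turning this into a tail estimate that is uniform in $n$ is the delicate point, and a finer accounting in the spirit of the second part of Lemma \ref{lem_calcul_oneended} should be what makes the sum over $m\geq K$ vanish uniformly as $K\to\infty$.
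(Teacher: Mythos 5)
Your reduction via Portmanteau is fine: $\{\deg(\rho)\geq K\}$ is indeed an increasing event measurable with respect to a bounded dual ball, so open in $d^*_{\loc}$, and the tail comparison is valid. But there is a genuine gap in the rest, and you have put your finger on it yourself without filling it: summing over all star configurations $t$ of a given size $m$ runs into an exponential count, and the bounded ratio lemma only yields a bound per term that is no better than a constant per unit of size. Your hoped-for fix does not work. The $n^{-2}$ penalty from Goulden--Jackson only kicks in when the star carries genus, so it kills the non-planar contribution, but that is not where the exponential count lives; it lives already in the planar stars. The observation that the probabilities of all planar stars ``sum trivially to at most one'' is a statement about the \emph{total} mass, not about the tail $m\geq K$, so it yields no uniform-in-$n$ decay. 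Corollaries \ref{conclusion_planarite} and \ref{conclusion_oneended} cannot rescue this either: they control fixed finite configurations in the limit, not a tail bound.

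The paper (following \cite{AS03}, whose argument is also of this form rather than a direct sum over star shapes) avoids the combinatorial explosion entirely by a peeling exploration. One peels filled-in around the root with an algorithm that always peels the boundary edge to the left of $\rho$, and stops as soon as $\rho$ is no longer on the boundary. By planarity and one-endedness, the explored region at every step is a single finite planar disk. The key step is to exhibit, for every such explored disk $t$ with $\rho\in\partial t$, an explicit two-triangle extension $t^+$ (same perimeter, two more triangles, one more vertex) whose occurrence swallows the root, and to observe that
\[
\P\bigl(t^+\subset T\mid t\subset T\bigr)
=\lim_k \frac{\tau_p(n_k+m-1,g_{n_k})}{\tau_p(n_k+m,g_{n_k})}
\geq \frac{1}{C_\eps}
\]
uniformly over $t$, by the bounded ratio lemma. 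This gives a uniform lower bound on the probability of terminating within two further peeling steps, so the exploration time has geometric tail and the root degree is a.s. finite. The point you are missing is precisely this conditional, step-by-step structure: it converts the bounded ratio lemma into a geometric decay without ever needing to enumerate or sum over stars.
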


\begin{proof}
We follow the approach of \cite{AS03} and perform a filled-in peeling exploration of $T$. Before expliciting the peeling algorithm that we use, note that we already know by Corollary \ref{conclusion_planarite} that the explored part will always be planar, so no peeling step will merge two different existing holes. Moreover, by Corollary \ref{conclusion_oneended}, if a peeling step separates the boundary into two holes, then one of them has finitely many faces inside, so it will be filled with a finite triangulation. Therefore, at each step, the explored part will be a triangulation with a single hole.

The peeling algorithm $\mathcal{A}$ that we use is the following: if the root vertex $\rho$ belongs to $\partial t$, then $\mathcal{A}(t)$ is the edge on $\partial t$ on the left of $\rho$. If $\rho \notin \partial t$, then the exploration is stopped. Since only finitely many edges incident to $\rho$ are added at each step, it is enough to prove that the exploration will a.s. eventually stop. We recall that $\expl_T^{\mathcal{A}}(i)$ is the explored part at time $i$.

We will prove that at each step, conditionally on $\expl_T^{\mathcal{A}}(i)$, the probability to swallow the root and finish the exploration at time $i+1$ or $i+2$ is bounded from below by a positive constant. For every triangulation $t$ with one hole such that $\rho \in \partial t$, we denote by $t^+$ the triangulation constructed from $t$ as follows (see Figure \ref{fig_swallowing_root}):
\begin{itemize}
\item
we first glue a triangle to the edge of $\partial t$ on the left of $\rho$, in such a way that the third vertex of this triangle does not belong to $t$, to obtain a triangulation with perimeter at least $2$;
\item
we then glue a second triangle to the two edges of the boundary incident to $\rho$.
\end{itemize}

\begin{figure}
\centering
\includegraphics[scale=0.5]{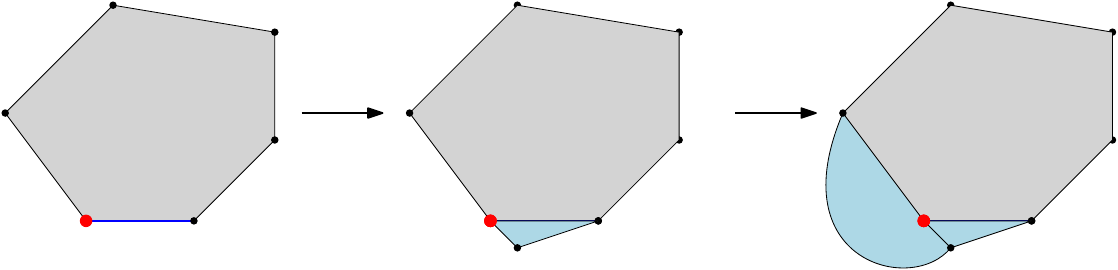}\hspace{2cm}\includegraphics[scale=0.5]{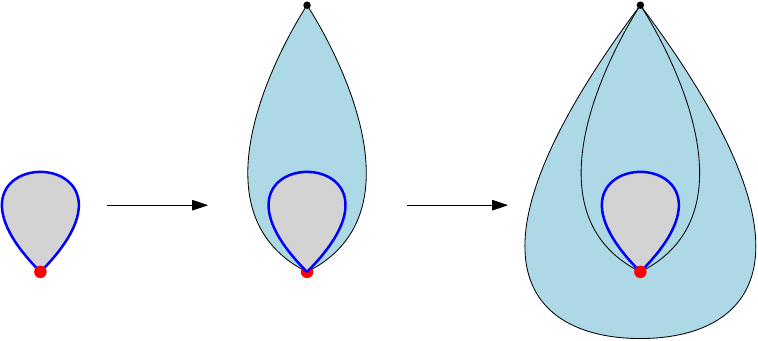}
\caption{The construction of $t^+$ from $t$. In gray, the triangulation $t$. In red, the root vertex. In blue, the new triangles. In the bottom, the case $|\partial t|=1$.}
\label{fig_swallowing_root}
\end{figure}

Note that $t^+$ is a planar map with the same perimeter as $t$ but one more vertex and two more triangles. By the choice of our peeling algorithm, if $\expl_T^{\mathcal{A}}(i)^+ \subset T$, then we have $\expl_T^{\mathcal{A}}(i+2)=\expl_T^{\mathcal{A}}(i)^+$. Moreover, if this is the case, the exploration is stopped at time $i+2$. Hence, it is enough to prove that the quantity
\[ \P \left( t^+ \subset T | t \subset T \right)\]
is bounded from below for finite, planar triangulations $t$ with a single hole and $\rho \in \partial t$.

We fix such a $t$, with perimeter $p$. Let also $(n_k)$ be a sequence of indices such that $T_{n_k,g_{n_k}}$ converges in distribution to $T$. We have
\[ \P \left( t^+ \subset T | t \subset T \right)=\lim_{k \to +\infty} \frac{\P \left( t^+ \in T_{n_k,g_{n_k}} \right)}{\P \left( t \in T_{n_k,g_{n_k}} \right)}  = \lim_{k \to +\infty} \frac{\tau_p \left( n_k+m-1, g_{n_k} \right)}{\tau_p(n_k+m,g_{n_k})}, \]
where $m=\frac{p-2-|F(t)|}{2}$ and $|F(t)|$ is the number of triangles of $t$. Moreover, there is $\eps>0$ such that $g_n \leq \left( \frac{1}{2} - 2\eps \right) n$ for $n$ large enough, so $\frac{g_{n_k}}{n_k+m} \leq \frac{1}{2}-\eps$ for $k$ large enough. By the bounded ratio lemma, we obtain
\[\P \left( t^+ \subset T | t \subset T \right) \geq \frac{1}{C_{\eps}}\]
for every $t$, which concludes the proof.
\end{proof}

\begin{rem}
Our proof shows that the number of steps needed to swallow the root has exponential tail. However, since we do not control the finite triangulations filling the holes that may appear, it does not give any quantitative bound on the root degree.
\end{rem}

\begin{proof}[Proof of Proposition \ref{prop_tightness_dloc}]
Let $T$ be a subsequential limit of $(T_{n,g_n})$ for $d_{\loc}^*$. By Lemma \ref{lem_dual_convergence}, to obtain tightness, it is enough to prove that almost surely, all the vertices of $T$ have finite degrees. The argument is essentially the same as in \cite{AS03} and relies on Lemma \ref{lem_root_degree_is_finite} and invariance under the simple random walk.

More precisely, for every $n$, let $\overrightarrow{e}_0^n$ be the root edge of $T_{n,g_n}$ and let $\overrightarrow{e}_0$ be the root of $T$. We first note that the distribution of $T_{n,g_n}$ is invariant under reversing the orientation of the root, so this is also the case of $T$. By Lemma \ref{lem_root_degree_is_finite}, this implies that the endpoint of $\overrightarrow{e}_0$ has a.s. finite degree.

We then denote by $\overrightarrow{e}_1^n$ the first step of the simple random walk on $T_{n,g_n}$: its starting point is the endpoint of $\overrightarrow{e}_0^n$ and its endpoint is picked uniformly among all the neighbours of the starting point. Since the endpoint of $\overrightarrow{e}_0$ has finite degree, we can also define the first step $\overrightarrow{e}_1$ of the simple random walk on $T$.
For the same reason as in the planar case (see Theorem 3.2 of \cite{AS03}), the triangulations $(T_{n,g_n}, \overrightarrow{e}_1^n)$ and $(T_{n,g_n}, \overrightarrow{e}_0^n)$ have the same distribution, so $(T,\overrightarrow{e}_1)$ has the same distribution as $(T, \overrightarrow{e}_0)$. In particular, all the neighbours of the endpoint of $\overrightarrow{e}_0$ must have finite degrees. From here, an easy induction on $i$ shows that for every $i \geq 0$, we can define the $i$-th step $\overrightarrow{e}_i$ of the simple random walk on $T$, that $(T, \overrightarrow{e}_i)$ has the same distribution as $(T, \overrightarrow{e}_0)$ and that all vertices at distance $i$ from the root in $T$ are finite. This proves the tightness for $d_{\loc}$.

Planarity is proved by Corollary~\ref{conclusion_planarite}. Finally, it easy to check that for triangulations with finite vertex degrees, the weak version of one-endedness proved in Corollary~\ref{conclusion_oneended} implies the usual one. For example, if $V$ is a finite set of vertices of $T$, one can consider a finite, connected triangulation $t \subset T$ containing all the faces and edges incident to vertices of $V$.
\end{proof}

\section{Weakly Markovian triangulations}

The goal of this section is to prove Theorem \ref{thm_weak_markov}. We first recall the definition of a weakly Markovian triangulation.

\begin{defn}
Let $T$ be a random infinite triangulation of the plane. We say that $T$ is \emph{weakly Markovian} if there is a family $(a^p_v)_{v \geq p \geq 1}$ of numbers with the following property: for every triangulation $t$ with a hole of perimeter $p$ and $v$ vertices in total, we have
\[\P \left( t \subset T \right)=a^p_v.\]
\end{defn}

By their definition, the PSHT $\TT_{\lambda}$ are weakly Markovian. We denote by $a^p_v(\lambda)=C_p(\lambda) \lambda^v$ the associated constants, where $C_p(\lambda)$ is given by \eqref{eqn_cp_psht}. This implies that any mixture of these is also weakly Markovian. Indeed, for any random variable $\Lambda$ with values in $(0,\lambda_c]$, we denote by $\TT_{\Lambda}$ the PSHT with random parameter $\Lambda$. Let also $\mu$ be the distribution of $\Lambda$. Then for every triangulation $t$ with a hole of perimeter $p$ and $v$ vertices in total, we have
\begin{equation}\label{defn_apv_mu}
\P \left( t \subset \TT_{\Lambda} \right) = \int_0^{\lambda_c} \P \left( t \subset \TT_{\lambda} \right) \mu(\mathrm{d}\lambda) = \int_0^{\lambda_c} C_p(\lambda) \lambda^v \mu(\mathrm{d}\lambda) =: a^p_v[\mu].
\end{equation}
Note that the last integral always converges since $C_p(\lambda) \lambda^v$ is bounded by $1$. Therefore, the triangulation $\TT_{\Lambda}$ is weakly Markovian. Our goal here is to prove Theorem \ref{thm_weak_markov}, which states that the converse is true.

In the remainder of this section, we will denote by $T$ some weakly Markovian random triangulation, and by $(a^p_v)_{v \geq p \geq 1}$ the associated constants. Before giving an idea of the proof, let us start with a remark that will be very useful in all that follows. The numbers $a^p_v$ are linked to each other by linear equations that we call the \emph{peeling equations}. In this section, for every $p \geq 1$ and $j \geq 0$, we denote by $|\mathcal{T}_{p}(j)|$ the number of planar triangulations of a $p$-gon (rooted on the boundary) with exactly $j$ inner vertices\footnote{In order to have nicer formulas, the convention we use here differs from the rest of the paper, in which the parameter $n$ is related to the \emph{total} number of vertices. This is why we do not use the $\tau$ notation. We insist that this holds only in Section 3.}. We also adopt the convention that $|\mathcal{T}_{2}(0)|=1$ (i.e. a hole of perimeter $2$ can be filled by simply gluing the two edges to each other). On the other hand $|\mathcal{T}_{1}(0)|=0$.

\begin{lem}\label{lem_peeling_equation}
For every $v \geq p \geq 1$, we have
\begin{equation}\label{peeling_equation}
a^p_v=a^{p+1}_{v+1}+2\sum_{i=0}^{p-1} \sum_{j=0}^{+\infty} |\mathcal{T}_{i+1}(j)| a^{p-i}_{v+j}.
\end{equation}
\end{lem}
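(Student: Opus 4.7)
The plan is to derive the peeling equation by revealing the triangle of $T$ that lies across a fixed boundary edge of the hole of $t$ and decomposing the event $\{t\subset T\}$ according to the topological type of this triangle.

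First, I would fix a distinguished edge $e=xy$ on the boundary of the hole of $t$. Assuming $t\subset T$, the region $T\setminus t$ is a triangulation of the $p$-gon, and I consider the unique triangle $\triangle$ of $T\setminus t$ that contains $e$. Let $z$ be its third vertex. Any vertex of $t$ lying strictly inside $t$ is already surrounded by faces of $t$, so it cannot be incident to the face $\triangle\notin t$; hence $z$ must be either (i) a new vertex not in $t$, or (ii) a vertex on $\partial t$.

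In case (i), $t\cup\triangle$ is a triangulation with a single hole, of perimeter $p+1$ and $v+1$ vertices, and the event ``$t\subset T$ and case (i) occurs'' coincides with $\{t\cup\triangle\subset T\}$. By weak Markovianness, its probability is $a^{p+1}_{v+1}$, the first term of the claimed identity.

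In case (ii), I encode the position of $z$ by the integer $i\in\{0,1,\dots,p-1\}$ counting the boundary edges between $y$ and $z$ when going around the hole away from $x$, with the conventions $i=0$ for $z=y$ (loop at $y$) and $i=p-1$ for $z=x$ (loop at $x$). The two new edges $yz$ and $zx$ of $\triangle$ split the hole into two holes of perimeters $i+1$ and $p-i$. By planarity and one-endedness of $T$, exactly one of them is finite and is filled by a finite triangulation of the corresponding polygon. If the $(i+1)$-gon side is filled by some $s\in\mathcal{T}_{i+1}(j)$, then $t\cup\triangle\cup s$ is a triangulation with a single hole of perimeter $p-i$ and $v+j$ vertices, whose probability of being contained in $T$ is $a^{p-i}_{v+j}$ by weak Markovianness. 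Summing over $s$ and $j$ yields $\sum_{j\geq 0}|\mathcal{T}_{i+1}(j)|\,a^{p-i}_{v+j}$ for each $i$. The symmetric subcase where the $(p-i)$-gon is filled contributes $\sum_{j\geq 0}|\mathcal{T}_{p-i}(j)|\,a^{i+1}_{v+j}$, and after the reindexing $i\mapsto p-1-i$ this sum coincides with the first one, producing the factor $2$. Adding the contributions of cases (i) and (ii) gives the identity.

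The only delicate point is to verify that the degenerate loop cases $z=x$ and $z=y$ integrate into the same formula without additional corrections, which a direct inspection of the resulting perimeters $(1,p)$ and $(p,1)$ confirms. Convergence of the double sum is immediate since all terms are nonnegative and the partial sums are bounded by the probability $a^p_v\leq 1$.
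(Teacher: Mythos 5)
Your proof is correct and follows essentially the same approach as the paper: perform one peeling step across a fixed boundary edge and split according to whether the third vertex $z$ is new or already on $\partial t$. The only cosmetic difference is in how case (ii) is parameterized. The paper indexes directly by the side (left or right of $f$) on which the finite part of $T\setminus t$ lies and by the number $i$ of boundary edges it contains, so the factor $2$ appears immediately from the left/right symmetry; you instead index by the position of $z$ and by which of the two resulting polygons is finite, and then recover the factor $2$ by the change of variables $i\mapsto p-1-i$. These are the same decomposition up to relabeling, and your verification that the reindexed sum coincides is correct.
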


In particular, the sum in the right-hand side must converge. Note also that if $v \geq p \geq 1$, then $v+j \geq p-i \geq 1$ in all the terms of the sum, so all the terms make sense.

\begin{proof}[Proof of Lemma \ref{lem_peeling_equation}]
The proof just consists of making one peeling step. Assume that $t \subset T$ for some triangulation $t$ with a hole of perimeter $p$ and volume $v$. Fix an edge $e \in \partial T$, and consider the face $f$ out of $t$ that is adjacent to $e$. Then we are in exactly one of the three following cases:
\begin{itemize}
\item
the third vertex of $f$ does not belong to $\partial t$, 
\item
the third vertex of $f$ belongs to $\partial t$, and $f$ separates on its left $i$ edges of $\partial t$ from infinity,
\item
the third vertex of $f$ belongs to $\partial t$, and $f$ separates on its right $i$ edges of $\partial t$ from infinity.
\end{itemize}
In the first case, the triangulation we obtain by adding $f$ to $t$ has perimeter $p+1$ and $v+1$ vertices in total. In the other two cases, $f$ separates $T \backslash t$ in two components, one finite and one infinite. The finite component has perimeter $i+1$. If it has $j$ inner vertices, then after filling the finite component, we obtain a triangulation with perimeter $p-i$ and volume $v+j$.
\end{proof}

Our main task will be to prove the following result: it states that the numbers $a_v^1$ for $v \geq 1$ are compatible with some mixture of PSHT.

\begin{prop}\label{prop_perimeter_one}
There is a probability measure $\mu$ on $(0,\lambda_c]$ such that, for every $v \geq 1$, we have
\[a^1_v=a^1_v[\mu].\]
\end{prop}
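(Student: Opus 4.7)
The key observation is that \eqref{eqn_cp_psht} at $p = 1$ yields $C_1(\lambda) = 1/\lambda$, so the identity $a^1_v = a^1_v[\mu]$ becomes
\begin{equation*}
  a^1_v \;=\; \int_0^{\lambda_c} \lambda^{v-1}\,\mu(d\lambda), \qquad v \geq 1.
\end{equation*}
Setting $m_n := a^1_{n+1}$ for $n \geq 0$, Proposition \ref{prop_perimeter_one} thus reduces to a classical Hausdorff moment problem on the interval $[0, \lambda_c]$: produce a probability measure $\mu$ on $(0, \lambda_c]$ whose $n$-th moment is $m_n$. By Hausdorff's theorem (after the rescaling $[0, \lambda_c]\to[0,1]$), the existence of such a representing measure is equivalent to the normalization $m_0 = a^1_1 = 1$ together with the complete monotonicity condition
\begin{equation*}
  \sum_{j=0}^{k} (-1)^j \binom{k}{j}\, a^1_{v+j}\ \geq\ 0 \qquad \text{for all } v \geq 1,\ k \geq 0.
\end{equation*}

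The crux of the proof is this complete monotonicity, which I would establish by a probabilistic inclusion--exclusion. Fix $v$ and $k$. The plan is to construct a finite triangulation $t$ of perimeter $1$ with $v$ vertices, together with $k$ pairwise ``compatible'' one-vertex enlargements $u_1, \dots, u_k$ of $t$ (each a triangulation of perimeter $1$ with $v+1$ vertices), having the property that for every subset $I \subseteq \{1, \dots, k\}$ the union $\bigcup_{i \in I} u_i$ is again a valid triangulation of perimeter $1$ with $v + |I|$ vertices. Concretely, one can take $t$ to contain $k$ vertex-disjoint triangles and let $u_i$ be obtained from $t$ by splitting the $i$-th such triangle with a single fresh interior vertex, so that the local modifications commute. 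Given such a gadget, the weak Markov property of $T$ gives $\P\bigl(\bigcap_{i \in I} \{u_i \subset T\}\bigr) = a^1_{v+|I|}$ for every $I$, and inclusion--exclusion identifies the left-hand side above with $\P(t \subset T \text{ and } u_i \not\subset T \text{ for all } i)$, which is non-negative as the probability of an event.

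Once complete monotonicity and the normalization $a^1_1 = 1$ (a convention on the trivial map, or a direct consequence of Lemma \ref{lem_peeling_equation} at $p = v = 1$) are in place, Hausdorff's theorem furnishes a (unique) representing measure $\mu$ on $[0, \lambda_c]$. To exclude an atom at $0$, which is necessary because $\TT_0$ is not defined, I would decompose $\mu = \alpha \delta_0 + \mu'$ with $\mu'$ supported on $(0, \lambda_c]$; the atom contributes $\alpha$ to $m_0$ but nothing to $m_n$ for $n \geq 1$, so $\mu'$ already satisfies $\int \lambda^{v-1}\,d\mu' = a^1_v$ for $v \geq 2$. Combining this with the $p=1$ peeling equation of Lemma \ref{lem_peeling_equation}, which links $a^1_1$ to the higher-index $a^1_{v+j}$, one then verifies $\alpha = 0$, so $\mu$ is indeed supported on $(0, \lambda_c]$.

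The main technical obstacle is the combinatorial construction of the gadget $(t; u_1, \dots, u_k)$ satisfying the strong compatibility condition required for the inclusion--exclusion identity. Perimeter $1$ is quite rigid (the hole is a single loop), so producing $k$ genuinely independent perimeter-preserving local enlargements needs some care; but once that point is settled, everything else follows from classical moment-problem theory and from routine manipulation of the peeling equations.
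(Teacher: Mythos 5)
Your reduction to a Hausdorff moment problem for the sequence $(a^1_{v+1})_{v \geq 0}$, and your identification of complete monotonicity as the crux, match the paper's strategy exactly. But the way you propose to prove complete monotonicity does not work, and you underestimate the work needed to pin down the support of $\mu$.

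The gadget construction is broken. You propose to form $u_i$ by subdividing a triangle of $t$ with a fresh interior vertex, and to interpret $\sum_j (-1)^j \binom{k}{j} a^1_{v+j}$ as $\P\bigl(t \subset T,\ u_i \not\subset T \text{ for all } i\bigr)$ via inclusion--exclusion. But with this construction, the events $\{t \subset T\}$ and $\{u_i \subset T\}$ are \emph{disjoint}: if $t \subset T$, the $i$-th triangle of $t$ is an actual face of $T$, whereas $u_i \subset T$ would require that same face to be subdivided in $T$. You cannot have both. So the events do not nest as inclusion--exclusion requires, and the identity you write is simply false. The rigidity you flag (``perimeter $1$ is quite rigid'') is not a technicality to be handled with care; it is fatal to this route. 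The only way to enlarge a perimeter-$1$ triangulation with a hole to another perimeter-$1$ triangulation with one more vertex is to glue a triangle to the loop, and there is essentially one way to do that, so you cannot produce $k$ commuting enlargements. The paper avoids this altogether: it proves the stronger statement $\left(\Delta^k a^p\right)_v \geq 0$ simultaneously for \emph{all} perimeters $p$, by induction on $k$, using the peeling equation \eqref{peeling_equation} as a linear identity with nonnegative coefficients. The induction step passes from $(k, p+1)$ to $(k+1, p)$, and the key term $|\T_1(1)|\, a^p_{v+1}$ in the peeling equation is what drives the telescoping. Restricting attention to $p=1$, as you implicitly do, loses the induction hypothesis one needs.

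Two further gaps. First, you assert that Hausdorff's theorem furnishes a measure on $[0, \lambda_c]$, but complete monotonicity of a sequence starting at $1$ gives a measure on $[0,1]$, not $[0, \lambda_c]$; restricting the support to $[0, \lambda_c]$ requires an extra argument. In the paper this comes from the growth $|\T_1(j)| \sim c\,\lambda_c^{-j} j^{-5/2}$ combined with the convergence of the peeling-equation series, which forces the moments $a^1_{v+1}$ to decay at least like $\lambda_c^v$. Second, your argument for ruling out an atom at $0$ is too optimistic: the peeling equation alone does not exclude it. The paper's argument uses the triangle-chain triangulations $t_p$ (Figure \ref{fig_trivial_triangulation}) to show that an atom at $0$ of mass $\delta$ would force $\P\bigl(\deg(\rho) = +\infty\bigr) \geq \delta$, contradicting the assumption that $T$ lives in the space $\overline{\T}$ of triangulations with finite vertex degrees. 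That appeal to the finite-degree hypothesis is essential and cannot be replaced by algebraic manipulation of the $a^p_v$.
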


Once Proposition \ref{prop_perimeter_one} is proved, Theorem~\ref{thm_weak_markov} easily follows. Indeed, Lemma \ref{lem_peeling_equation} can be rewritten
\begin{equation}\label{peeling_equation_rewritten}
a^{p+1}_{v+1}=a^p_v-2\sum_{i=0}^{p-1} \sum_{j=0}^{+\infty} |\mathcal{T}_{i+1}(j)| a^{p-i}_{v+j}
\end{equation}
for $v+1 \geq p+1 \geq 2$. Hence, numbers of the form $a^{p+1}_{v}$ can be expressed in terms of the numbers $a^{i}_{v}$ with $i \leq p$. On the other hand, the numbers $a^p_v[\mu]$ also satisfy~\eqref{peeling_equation_rewritten}. Therefore, by induction on $p$, we can prove that for every $v \geq p \geq 1$, we have
\[a^p_v=a^p_v[\mu].\]
Since the numbers $a^p_v$ characterize entirely the distribution of $T$, we are done.

Therefore, we only need to prove Proposition \ref{prop_perimeter_one}. As a particular case of \eqref{defn_apv_mu}, for every measure $\mu$ and any $v \geq 0$, we have
\[a^1_{v+1}[\mu]=\int_0^{\lambda_c} C_1(\lambda) \lambda^{v+1} \mu(\mathrm{d}\lambda) = \int_0^{\lambda_c} \lambda^{v} \mu(\mathrm{d}\lambda).\]
The right-hand side can be interpreted as the $v$-th moment of a random variable with distribution $\mu$. Therefore, all we need to prove is that there is a random variable $\Lambda$ with support in $(0,\lambda_c]$ such that, for every $v \geq 0$, we have
\[a^1_{v+1}=\E [\Lambda^v].\]

We first show that there exists such a variable with support in $[0,1]$. Since $a_1^1=1$, this is precisely the Hausdorff moment problem \cite{Hau21}, so all we need to show is that the sequence $(a^1_{v+1})_{v \geq 0}$ is completely monotonic. More precisely, let $\Delta$ be the discrete derivative operator:
\[(\Delta u)_n=u_n-u_{n+1}.\]
Then, to prove that $(a^1_{v+1})_{v \geq 0}$ is the moment sequence of a probability measure on $[0,1]$, it is sufficient to prove the following lemma.

\begin{lem}\label{lem_completely_monotonic}
For every $k \geq 0$ and $v \geq p \geq 1$, we have
\[ \left( \Delta^k a^p \right)_v \geq 0.\]
\end{lem}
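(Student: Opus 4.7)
The approach is to give a probabilistic interpretation of $(\Delta^k a^p)_v$ which makes its nonnegativity manifest. Fix a triangulation $t_0$ with a hole of perimeter $p$ and $v$ vertices in total. The goal is to construct a family $\{t_S\}_{S \subseteq \{1, \ldots, k\}}$ of triangulations extending $t_0$, with $t_\emptyset = t_0$, such that each $t_S$ has a hole of perimeter $p$ and $v + |S|$ vertices in total, and the \emph{compatibility condition} holds: for every triangulation $T$ of the plane and every $S$, we have $t_S \subset T$ if and only if $t_{\{i\}} \subset T$ for every $i \in S$. Given such a family, weak Markovianity gives $\P(t_S \subset T) = a^p_{v + |S|}$, and the inclusion-exclusion principle yields
\[
(\Delta^k a^p)_v \;=\; \sum_{S \subseteq \{1, \ldots, k\}} (-1)^{|S|} \P(t_S \subset T) \;=\; \P \!\left( t_0 \subset T,\ t_{\{i\}} \not\subset T \text{ for every } i \right) \;\geq\; 0.
\]

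The key building block is a \emph{bump}: given three consecutive boundary vertices $u, v, w$ of $\partial t_0$, the bump adds a new interior vertex $x$ together with the two triangles $(u, v, x)$ and $(v, w, x)$. This operation turns $v$ into an interior vertex, replaces the boundary edges $(u, v), (v, w)$ by the new boundary edges $(u, x), (x, w)$, preserves the perimeter, and adds exactly one vertex. If one can place $k$ bumps at pairwise disjoint pairs of adjacent boundary edges, then the bumps do not interact and $t_S$ is simply the triangulation obtained by adding to $t_0$ the bumps indexed by $S$; the compatibility condition then holds automatically, giving $(\Delta^k a^p)_v \geq 0$.

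This direct construction goes through as soon as $p \geq 2k$, so that the hole boundary has room for $k$ disjoint bumps. The main obstacle is the small-perimeter regime $p < 2k$, and in particular the case $p = 1$ (where no bump at all can be placed on a single loop edge), which is precisely the case needed for Proposition \ref{prop_perimeter_one}. To reduce this regime to the large-perimeter case, I would iterate the peeling equation of Lemma \ref{lem_peeling_equation} to expand
\[
a^p_v \;=\; \sum_\alpha c_\alpha\, a^{p_\alpha}_{v + m_\alpha}
\]
as a nonnegative combination indexed by peeling histories $\alpha$, with nonnegative coefficients $c_\alpha$, constant integer shifts $m_\alpha$, and perimeters $p_\alpha \geq 2k$. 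Since $\Delta^k$ in the variable $v$ commutes with such shifts and with the linear combination, the large-$p$ case then yields $(\Delta^k a^p)_v = \sum_\alpha c_\alpha (\Delta^k a^{p_\alpha})_{v + m_\alpha} \geq 0$. The delicate technical point, which I expect to be the main difficulty, is to set up this iteration so that every branch of the expansion reaches the regime $p_\alpha \geq 2k$: each peeling step can also produce ``swallowing'' contributions that decrease the perimeter, so one must argue carefully that such terms can always be further expanded and eventually pushed into the large-$p$ regime without spoiling the nonnegativity of the coefficients.
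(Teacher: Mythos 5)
The central claim of your construction — that placing $k$ bumps at pairwise disjoint pairs of boundary edges yields the compatibility condition $\{t_S \subset T\} = \bigcap_{i \in S}\{t_{\{i\}} \subset T\}$ — is false, already for $k=2$ and large $p$. The map $t_S$ forces the bump apexes $\{x_i\}_{i \in S}$ to be $|S|$ \emph{distinct} vertices, but in a given infinite planar one-ended $T$ nothing prevents two of these apexes from coinciding. Concretely, take $p=8$ with bumps at $(v_1,v_2,v_3)$ and $(v_5,v_6,v_7)$, and let $T$ contain a single vertex $x \notin t_0$ adjacent to all six of $v_1,v_2,v_3,v_5,v_6,v_7$, with the infinite part of $T$ living inside the quadrilateral $x v_3 v_4 v_5$ (the other quadrilateral $x v_7 v_8 v_1$ is filled by a finite triangulation, so $T$ is one-ended). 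Then $t_{\{1\}} \subset T$ and $t_{\{2\}} \subset T$ both hold, but $t_{\{1,2\}} \not\subset T$, since $t_{\{1,2\}}$ would require two distinct vertices $x_1 \ne x_2$ to map into $T$ while the geometry forces both onto $x$. Hence $\P\bigl(\bigcap_{i\in S} \{t_{\{i\}} \subset T\}\bigr) > a^p_{v+|S|}$ in general, the inclusion--exclusion identity $(\Delta^k a^p)_v = \P\bigl(t_0 \subset T,\ \forall i\, t_{\{i\}}\not\subset T\bigr)$ breaks, and nonnegativity does not follow. This gap is independent of, and more basic than, the unresolved small-$p$ reduction, which you already flag as problematic; and that reduction indeed requires nontrivial input (one would need to argue that the mass of peeling histories that never reach perimeter $\geq 2k$ vanishes, which is far from automatic for an a priori weakly Markovian $T$). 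By contrast, the paper's proof is a short purely algebraic induction on $k$: apply $\Delta^k$ in $v$ to the rearranged peeling equation $a^{p+1}_{v+1} = a^p_v - 2\sum_{i,j}|\mathcal{T}_{i+1}(j)|\,a^{p-i}_{v+j}$, use the hypothesis for $k$ (at all perimeters) to see the left side and every subtracted term are $\geq 0$, and keep only the $i=0,j=1$ term with the crude bound $2|\mathcal{T}_1(1)|\geq 1$ to get $(\Delta^{k+1}a^p)_v \geq 0$; no probabilistic interpretation of higher differences is needed.
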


Note that the case $p=1$ is sufficient. However, it will be more convenient to prove the lemma in the general case.

\begin{proof}
We prove the lemma by induction on $k$. The case $k=0$ just means that $a^p_v \geq 0$ for every $v \geq p \geq 1$. The case $k=1$ means that $a^p_v$ is non-decreasing in $v$, which is a straightforward consequence of the peeling equation: in the right-hand side of \eqref{peeling_equation}, the term corresponding to $i=0$ and $j=1$ is $|\mathcal{T}_{1}(1)| a^{p}_{v+1}$, so
\[a^p_v \geq 2 a^p_{v+1} \geq a^p_{v+1}.\]

Now assume that $k \geq 1$ and that the lemma is proved for $k$. We will use the result for $k$ and $p+1$ to prove it for $k+1$ and $p$. Let $v \geq p \geq 1$. By using the induction hypothesis and \eqref{peeling_equation_rewritten} for $(p,v), (p,v+1), \dots, (p,v+k)$, we obtain
\begin{align*}
0 & \leq \Delta^k (a^{p+1})_{v+1}\\
&= \sum_{\ell=0}^k (-1)^{\ell} \binom{k}{\ell} a^{p+1}_{v+1+\ell}\\
&= \sum_{\ell=0}^k (-1)^{\ell} \binom{k}{\ell} a^p_{v+\ell} -2\sum_{i=0}^{p-1} \sum_{j=0}^{+\infty} |\mathcal{T}_{i+1}(j)| \sum_{\ell=0}^k (-1)^{\ell} \binom{k}{\ell} a^{p-i}_{v+\ell+j}\\
&= \left( \Delta^k a^p \right)_v -2\sum_{i=0}^{p-1} \sum_{j=0}^{+\infty} |\mathcal{T}_{i+1}(j)| \left( \Delta^k a^{p-i} \right)_{v+j}.
\end{align*}
By the induction hypothesis, all the terms $\left( \Delta^k a^{p-i} \right)_{v+j}$ in the sum are nonnegative. Therefore, the above sum does not decrease if we remove some of the terms. In particular, we may remove all the terms except the one for which $i=0$ and $j=1$, and we may also remove the factor $2$. Since $|\mathcal{T}_{1}(1)|=1$, we obtain
\[0 \leq \left( \Delta^k a^p \right)_v - \left( \Delta^k a^{p} \right)_{v+1}= \left( \Delta^{k+1} a^p \right)_{v},\]
which proves the lemma by induction.
\end{proof}

\begin{rem}
Many bounds used in the last proof may seem very crude. This is due to the fact that we prove the existence of a variable $\Lambda$ with support in $[0,1]$, whereas its support is actually in $[0,\lambda_c]$. For example, if we had not removed the factors $2$, we would have obtained that $\Lambda$ has support in $\left[ 0, \frac{1}{2} \right]$ instead of $[0,1]$.
\end{rem}

\begin{proof}[End of the proof of Theorem \ref{thm_weak_markov}]
By Lemma \ref{lem_completely_monotonic}, there is a random variable $\Lambda$ with support in $[0,1]$ such that, for every $v \geq 1$, we have $a^1_{v+1}=\E [\Lambda^{v}]$. All we need to show is that $\Lambda \in (0,\lambda_c]$ almost surely. We first explain why $\Lambda \leq \lambda_c$. The peeling equation for $p=v=1$ shows that
\[\sum_{j=0}^{+\infty} |\mathcal{T}_{1}(j)| a^1_{j+1} \leq a^1_1 <+\infty. \]
On the other hand, we know that $|\mathcal{T}_{1}(j)| \sim c \lambda_c^{-j} j^{-5/2}$ as $j \to +\infty$ for some constant $c>0$. Therefore, if $\P \left( \Lambda \geq \lambda_c+\eps \right) \geq \eps$ for some $\eps>0$, then $a^1_{v+1} \geq \eps (\lambda_c+\eps)^v$ for every $v$ and the series above diverge, so we get a contradiction.

We finally prove that $\P \left( \Lambda=0 \right)=0$. As explained above, we already know that
\[a^p_v=\int_0^{\lambda_c} a^p_v(\lambda) \mu(\mathrm{d}\lambda), \]
where $\mu$ is the distribution of $\Lambda$. We also have $a^p_v(0)=\mathbbm{1}_{p=v}$. Therefore, if $\delta=\P \left( \Lambda=0 \right)$, for every $p \geq 1$, we have $a_p^p \geq \delta$. Now let $t_p$ be the triangulation with $p$ vertices and a hole of perimeter $p$ represented on Figure \ref{fig_trivial_triangulation}.
\begin{figure}
\begin{center}
\includegraphics[scale=0.5]{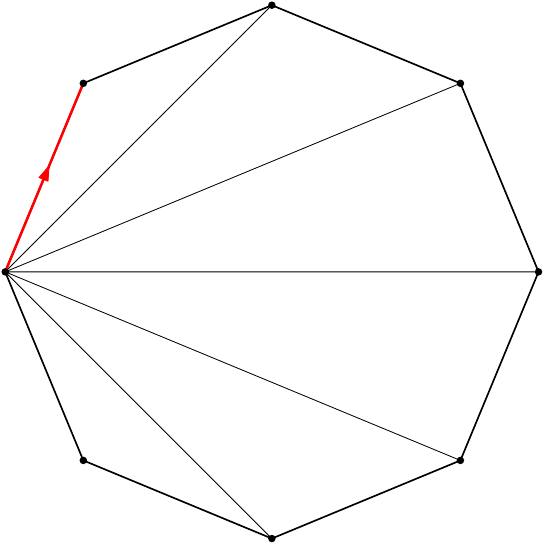}
\end{center}
\caption{The "triangle chain" triangulation $t_p$ with $p$ vertices (for $p=8$).}\label{fig_trivial_triangulation}
\end{figure}
For every $p \geq 1$, we have $\P \left( t_p \subset T \right)=a^p_p \geq \delta$. Since the events $t_p \subset T$ are nonincreasing in $p$, we have
\[ \P \left( \forall p \geq 1, t_p \subset T \right) \geq \delta.\]
On the other hand, if $t_p \subset T$, then the degree of the root vertex is at least $p+2$, so we have $\P \left( \deg(\rho)=+\infty \right) \geq \delta$. Since the degrees are finite, we must have $\delta=0$, so $\Lambda$ is supported in $(0,\lambda_c]$, which concludes the proof.
\end{proof}

\section{Ergodicity}\label{sec_ergodicity}

Throughout this section, we denote by $(g_n)$ a sequence such that $\frac{g_n}{n} \to \theta \in \left[ 0,\frac{1}{2} \right)$ and by $T$ a subsequential limit of $(T_{n, g_n})$ for $d_{\loc}$. In order to keep the notation light, we will always implicitly restrict ourselves to values of $n$ along which $T_{n, g_n}$ converges to $T$ in distribution.

For any $n$, the triangulation $T_{n,g_n}$ is weakly Markovian, so this is also the case of $T$. Therefore, by Theorem \ref{thm_weak_markov}, we know that $T$ must be a mixture of PSHT, i.e. there is a random variable $\Lambda \in (0,\lambda_c]$ such that $T$ has the same distribution as the PSHT with random parameter $\mathbb{T}_{\Lambda}$.

We also note right now that by the discussion in the end of Section \ref{subsec_psht} (Equation \eqref{eqn_asymptotic_peeling_psht}), the parameter $\Lambda$ is a measurable function of the triangulation $\TT_{\Lambda}$. More precisely, if $P^{\Lambda}$ and $V^{\Lambda}$ are the perimeter and volume processes associated with the peeling exploration of $\TT_{\Lambda}$, then $\Lambda$ can be defined as $f^{-1} \left( \lim_{i \to +\infty} \frac{P^{\Lambda}(i)}{V^{\Lambda}(i)} \right)$, where $f(\lambda)=1-4h$ is injective. In particular $\Lambda$ is defined without any ambiguity. Our goal is now to prove that $\Lambda$ is deterministic, and given by \eqref{eqn_lambda_vs_theta}.

\subsection{The two holes argument}\label{subsec_two_holes}

Roughly speaking, we know that $T_{n,g_n}$ seen from a typical point $e_n$ looks like a PSHT with random parameter $\Lambda$. We would like to show that $\Lambda$ does not depend on $\left( T_{n,g_n}, e_n \right)$. The first step is essentially to prove that $\Lambda$ does not depend on $e_n$.

More precisely, conditionally on $T_{n, g_n}$, we pick two independent uniform oriented edges $e^1_n, e^2_n$ of $T_{n, g_n}$. The pairs $(T_{n, g_n}, e^1_n)$ and $(T_{n, g_n}, e^2_n)$ have the same distribution, and both converge in distribution to $\mathbb{T}_{\Lambda}$. It follows that the pairs
\[\left( (T_{n, g_n}, e^1_n), (T_{n, g_n}, e^2_n) \right)\]
for $n \geq 1$ are tight, so up to further extraction, they converge to a pair $(\mathbb{T}^1_{\Lambda_1}, \TT^2_{\Lambda_2})$, where both marginals have the same distribution as $\TT_{\Lambda}$. By the above discussion, the variables $\Lambda_1$ and $\Lambda_2$ are well defined. By the Skorokhod representation theorem, we may assume the convergence in distribution is almost sure. Our goal in this subsection is to prove the following lemma.

\begin{prop}\label{prop_same_map_same_lambda}
Under the convergence
\[\left( (T_{n, g_n}, e^1_n), (T_{n, g_n}, e^2_n) \right) \xrightarrow[n \to +\infty]{a.s.} \left( \mathbb{T}^1_{\Lambda_1}, \TT^2_{\Lambda_2} \right), \]
we have $\Lambda_1=\Lambda_2$ almost surely.
\end{prop}

\begin{proof}
The idea of the proof is the following: if $\Lambda_1 \ne \Lambda_2$, consider two large neighbourhoods $N^1_n$ and $N^2_n$ (say, of size $100$) with the same perimeter around $e_n^1$ and $e_n^2$. Then there is a natural way to "swap" $N^1_n$ and $N^2_n$ in $T_{n,g_n}$ (cf. Figure \ref{fig_two_holes}), without changing its distribution. If we do this, then $T_{n,g_n}$ around $e^1_n$ looks like $\TT_{\Lambda_1}$ in a neighbourhood of the root of size $100$, and like $\TT_{\Lambda_2}$ out of this neighbourhood. However, if $\Lambda_1 \ne \Lambda_2$, such a configuration is highly unlikely for a mixture of PSHT.

\begin{figure}
\begin{center}
\includegraphics[scale=0.8]{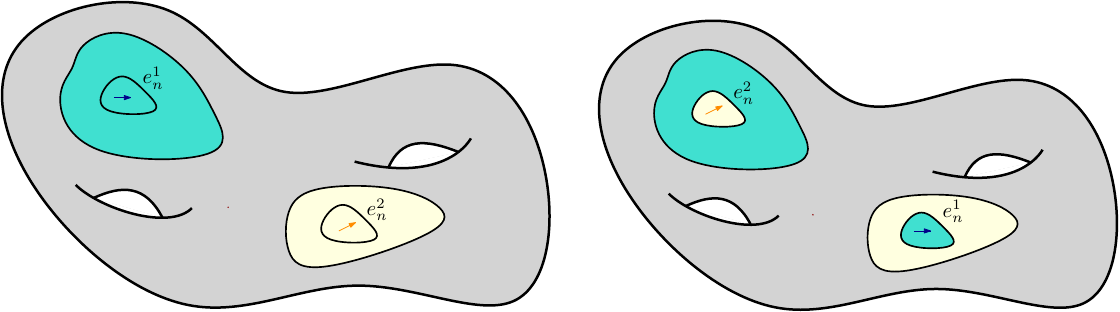}
\end{center}
\caption{The idea of the two-holes argument: the inner parts around $e_n^1$ and $e_n^2$ are "swapped". The root edges $e_n^1$ and $e_n^2$ are swapped at the same time as their neighbourhoods.}\label{fig_two_holes}
\end{figure}

More precisely, in the following proof, we consider a deterministic peeling algorithm $\mathcal{A}$, according to which we will explore $T_{n,g_n}$ around $e_n^1$ and around $e_n^2$. All the explorations we will consider will be filled-in: everytime the peeled face separates the undiscovered part in two, we discover completely the smaller part. While the computations in the beginning of the proof hold for any choice of $\mathcal{A}$, we will need later to specify the choice of the algorithm.

We denote by $\expl_n^1(i)$ the explored part at time $i$ when the exploration is started from $e_n^1$. If at some point $\expl_n^1(i)$ is non-planar, then the exploration is stopped. We write respectively $P_n^1(i)$ and $V_n^1(i)$ for its perimeter and its volume (i.e. its total number of vertices). 
For any $p \geq 2$, let $\tau_n^1(p)$ be the smallest $i$ such that  $P_n^1(i)=p$. We denote by $\expl_n^2(i)$, $P_n^2(i)$, $V_n^2(i)$ and $\tau_n^2(p)$ the analog quantities for the exploration started from $e_n^2$. We also denote by $\expl_{\infty}^1(i)$, $P_{\infty}^1(i)$, $V_{\infty}^1(i)$ and $\tau_{\infty}^1(p)$ the analog quantities for $\TT^1_{\Lambda_1}$, and similarly for $\TT^2_{\Lambda_2}$.

We fix $\eps>0$. We recall that
\[ \frac{P_{\infty}^1(i)}{V_{\infty}^1(i)} \xrightarrow[i \to +\infty]{a.s.} f(\Lambda_1), \]
where $f:(0,\lambda_c] \to [0,1)$ is bijective and decreasing.
Note also that the times $\tau_{\infty}^1(p)$ are a.s. finite and $\tau_{\infty}^1(p) \to +\infty$ when $p \to +\infty$, so for $p$ large enough, we have
\[ \P \left( \left| \frac{p}{V_{\infty}^1(\tau_{\infty}^1(p))}-f(\Lambda_1) \right| \geq \eps \right) \leq \eps.\]
We fix such a $p$ until the end of the proof. Moreover, we have
\[ \frac{q}{V^1_{\infty}(\tau^1_{\infty}(q))-V^1_{\infty}(\tau^1_{\infty}(p))} \xrightarrow[q \to +\infty]{a.s.} f(\Lambda_1),\]
so for $q$ large enough, we have
\[ \P \left( \left| \frac{q}{V^1_{\infty}(\tau^1_{\infty}(q))-V^1_{\infty}(\tau^1_{\infty}(p))} - f(\Lambda_1) \right| \geq \eps \right) \leq \eps.\]
We fix a such a $q>p$ until the end of the proof. 
Moreover, the local convergence of $(T_{n,g_n},e_n^1)$ to $\TT^1_{\Lambda_1}$ implies the convergence of the peeling exploration. Hence, the probability that $\tau_n^1(p)$ is finite goes to $1$ as $n \to +\infty$. Therefore, for $n$ large enough, we have
\begin{equation}\label{eqn_tau_p_vs_infty}
\P \left( \left| \frac{p}{V_n^1(\tau_n^1(p))}-f(\Lambda_1) \right| \geq 2\eps \right) \leq 2\eps,
\end{equation}
\begin{equation}\label{eqn_tau_q_vs_infty}
\P \left( \left| \frac{q}{V_n^1(\tau_n^1(q))-V_n^1(\tau_n^1(p))}-f(\Lambda_1) \right| \geq 2\eps \right) \leq 2\eps.
\end{equation}
By combining \eqref{eqn_tau_p_vs_infty} and \eqref{eqn_tau_q_vs_infty}, we also obtain
\begin{equation}\label{eqn_tau_q_vs_tau_p}
\P \left( \left| \frac{q}{V_n^1(\tau_n^1(q))-V_n^1(\tau_n^1(p))}-\frac{p}{V_n^1(\tau_n^1(p))} \right| \geq 4\eps \right) \leq 4\epsilon
\end{equation}
for $n$ large enough. Moreover, the same is true if we replace the exploration from $e_n^1$ by the exploration from $e_n^2$ (with of course $\Lambda_2$ playing the role of $\Lambda_1$).

We now specify the properties that we need our peeling algorithm $\mathcal{A}$ to satisfy. It roughly means that the edges that we peel after time $\tau(p)$ should not depend on what happened before time $\tau(p)$. More precisely, before the time $\tau(p)$, the peeled edge can be any edge of the boundary. At time $\tau(p)$, we color in red an edge of the boundary of $\expl(\tau(p))$ according to some deterministic convention. At time $i \geq \tau(p)$, the choice of the edge to peel at time $i+1$ must only depend on the triangulation $\expl(i) \backslash \expl(\tau(p))$, which is rooted at the red edge (see Figure \ref{fig_peeling_algo}). It is easy to see that such an algorithm exists: we only need to fix a peeling algorithm $\mathcal{A}'$ for triangulations of the sphere or the plane, and a peeling algorithm $\mathcal{A}''$ for triangulations of the $p$-gon, and to use $\mathcal{A}'$ before time $\tau(p)$ and $\mathcal{A}''$ after $\tau(p)$. Note also that we can know, by only looking at $\expl(i)$, if $i \leq \tau(p)$ or not, so at each step the peeled edge will indeed depend only on the explored part.

\begin{figure}
\begin{center}
\includegraphics[scale=0.6]{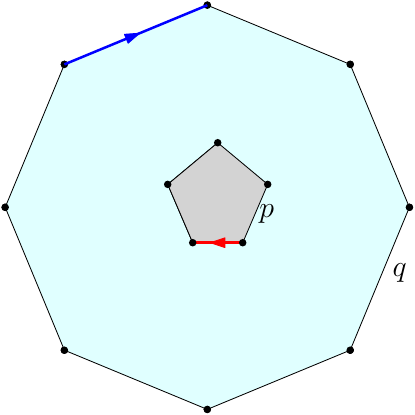}
\end{center}
\caption{Illustration of the choice of the peeling algorithm: in gray, the part $\expl(\tau(p))$. In cyan, the part $\expl(i) \backslash \expl(\tau(p))$, rooted at the red edge. The peeled edge (in blue) may only depend on the cyan part (which is rooted at the red edge).}\label{fig_peeling_algo}
\end{figure}

We can now define precisely our surgery operation on $\left( T_{n,g_n}, e_n^1, e_n^2 \right)$. We say that $e_n^1$ and $e_n^2$ are \emph{well separated} if $\tau_n^1(p), \tau_n^2(p)<+\infty$ and if the regions $\expl_n^1(\tau_n^1(p))$ and $\expl_n^2(\tau_n^2(p))$ have no common vertex. If $e_n^1$ and $e_n^2$ are well separated, we remove the triangulations $\expl_n^1(\tau_n^1(p))$ and $\expl_n^2(\tau_n^2(p))$, which creates two holes of perimeter $p$. We then glue each of the two triangulations to the hole where the other was, in such a way that the red edges of our peeling algorithm coincide (see Figure \ref{fig_two_holes}). If the two roots are not well-separated, then $\left( T_{n,g_n}, e_n^1, e_n^2 \right)$ is left unchanged. This operation is an involution on the set of bi-rooted triangulations with fixed size and genus. Therefore, if we denote by $T'_{n,g_n}$ the new triangulation we obtain, it is still uniform, and $\left( T'_{n,g_n}, e_n^1, e_n^2 \right)$ has the same distribution as $\left( T_{n,g_n}, e_n^1, e_n^2 \right)$.

\begin{lem}
The probability that $e_n^1$ and $e_n^2$ are well separated goes to $1$ as $n \to +\infty$.
\end{lem}

\begin{proof}
The local convergence of the exploration implies that the sequence $\left( \tau^1_n(p) \right)_{n \geq 0}$ is tight, and so is $\left( V_n^1 (\tau^1_n(p)) \right)_{n \geq 0}$. Since the diameter of a triangulation is bounded by its volume, the diameters of the maps $\expl_n^1 (\tau^1_n(p))$ are tight (when $n \to +\infty$), and the same is true around $e_n^2$. On the other hand, by local convergence (for $d_{\loc}$), for every fixed $r$, the volume of the ball of radius $r$ around $e_n^1$ is tight. Since $e_n^2$ is uniform and independent of $e_n^1$, we have
\[\P \left( d_{T_{n,g_n}} (e^1_n,e^2_n) \leq r \right) \xrightarrow[n \to +\infty]{} 0\]
for every $r \geq 0$. From here, we obtain
\[ \P \left( d_{T_{n,g_n}} (e^1_n,e^2_n) \leq \diam \left( \expl_n^1 (\tau^1_n(p)) \right) + \diam \left( \expl_n^2 (\tau^2_n(p)) \right) \right) \xrightarrow[n \to +\infty]{} 0. \]
If this last event occurs, then the regions $\expl_n^1 (\tau^1_n(p))$ and $\expl_n^2 (\tau^2_n(p))$ do not intersect, which proves the lemma.
\end{proof}

Now, if we perform a peeling exploration on $T'_{n,g_n}$ from $e_n^1$ using algorithm $\mathcal{A}$, then the part explored at time $\tau(p)$ will be exactly the triangulation $\expl^1_n(\tau^1_n(p))$. Moreover, the red edge on $\partial \expl^1_n(\tau^1_n(p))$ is glued at the same place as the red edge on $\partial \expl^2_n(\tau^2_n(p))$ and our peeling algorithm "forgets" the interior of $\expl^1_n(\tau^1_n(p))$. Hence, the part explored between $\tau(p)$ and $\tau(q)$ is exactly $\expl^2_n(\tau^2_n(q)) \backslash \expl^2_n(\tau^2_n(p))$ (this is where we need the algorithm $\mathcal{A}$ to satisfy the property described above). Therefore, the part of $T'_{n,g_n}$ discovered between times $\tau(p)$ and $\tau(q)$ has perimeter $q$ and volume
\[ V_n^2(\tau_n^2(q))-V_n^2(\tau_n^2(p)). \]
Now, since $(T'_{n,g_n}, e_n^1)$ has the same distribution as $(T_{n,g_n}, e_n^1)$, we can apply \eqref{eqn_tau_q_vs_tau_p} to the exploration of $T'_{n,g_n}$ from $e^1_n$. We obtain
\[\P \left( \left| \frac{q}{V_n^2(\tau_n^2(q))-V_n^2(\tau_n^2(p))}-\frac{p}{V_n^1(\tau_n^1(p))} \right| \geq 4\eps \right) \leq 4\epsilon.\]
By combining this with \eqref{eqn_tau_p_vs_infty} for the exploration of $T_{n,g_n}$ started from $e_n^1$ and with \eqref{eqn_tau_q_vs_infty} for the exploration of $T_{n,g_n}$ started from $e_n^2$, we obtain
\[\P \left( \left| f(\Lambda_1)-f(\Lambda_2) \right| \geq 8\eps \right) \leq 8\eps.\]
This is true for any $\eps>0$, so we have $f(\Lambda_1)=f(\Lambda_2)$ a.s. Since $f$ is strictly decreasing on $(0,\Lambda_c]$, we are done.
\end{proof}

\subsection{Conclusion}

In order to conclude the proof of Theorem \ref{main_thm}, we will need to compute the mean inverse root degree in the PSHT. This is the only observable that is easy to compute in $T_{n,g_n}$, so it is not surprising that such a result is needed to link $\theta$ to $\lambda$.

\begin{prop}\label{prop_degree_pshit}
For $\lambda \in (0,\lambda_c]$, let
\[ d(\lambda)=\E \left[ \frac{1}{\deg_{\TT_{\lambda}}(\rho)} \right].\]
Then we have
\[d(\lambda)=\frac{  h \log \frac { 1 + \sqrt { 1 - 4 h } } { 1 - \sqrt { 1 - 4 h } } } { ( 1 + 8 h ) \sqrt { 1 - 4 h } }, \]
where $h$ is given by \eqref{eqn_defn_h}. In particular, the function $d$ is strictly increasing on $(0,\lambda_c]$, with $d(\lambda_c)=\frac{1}{6}$ and $\lim_{\lambda \to 0} d(\lambda)=0$.
\end{prop}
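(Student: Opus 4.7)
The plan is to compute $d(\lambda)=\E[1/\deg_{\TT_\lambda}(\rho)]$ by means of a peeling exploration of $\TT_\lambda$ tailored to reveal the edges incident to the root vertex $\rho$. Specifically, I would use a peeling algorithm that at each step selects an edge of $\partial\expl(k)$ incident to $\rho$ (if one exists). Because $\TT_\lambda$ has a.s.\ finite vertex degrees and a filled-in exploration of the one-ended PSHT has a single hole at each step, this exploration swallows $\rho$ at an almost surely finite stopping time $\sigma$, and $\deg_{\TT_\lambda}(\rho)$ equals the number of edges at $\rho$ discovered by time $\sigma$. The spatial Markov property of $\TT_\lambda$ together with the explicit expression \eqref{eqn_cp_psht} for $C_p(\lambda)$ makes every peeling transition fully explicit: conditionally on the current boundary of perimeter $p$, the probability of each outcome (new vertex, or a finite region of prescribed perimeter closed to the left or to the right, with the sub-case of interest being the one that swallows $\rho$) is a ratio $C_{p'}(\lambda)/C_p(\lambda)$ weighted by a count of triangulations of a polygon of the relevant size.

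The cleanest way to extract $\E[1/\deg(\rho)]$ is to use the identity $1/k=\int_0^1 x^{k-1}\,\mathrm{d}x$, so that
\[
d(\lambda)=\int_0^1 \E\bigl[x^{\deg(\rho)-1}\bigr]\,\mathrm{d}x,
\]
and to compute the generating function $G(x)=\E[x^{\deg(\rho)}]$ by a recursion indexed by the current boundary length around $\rho$. This recursion is exactly the one encoded by the one-step peeling transitions above, and it can be solved in closed form by substituting the expression for $C_p(\lambda)$ as a polynomial in $1/h$ weighted by partial sums of central binomial coefficients and invoking the standard generating-function identities for triangulations of polygons (as in \cite{CurPSHIT, Kri07, B16}). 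After simplification, the polynomial contributions collapse and the logarithmic term in the claimed formula emerges.

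For the monotonicity and boundary values, the cleanest route is to parametrize everything by $h\in(0,1/4]$ through \eqref{eqn_defn_h}. At $h=1/4$ both numerator and denominator vanish, but expanding $\log\frac{1+y}{1-y}=2y+O(y^3)$ with $y=\sqrt{1-4h}\to 0$ gives the limit $d(\lambda_c)=2h/(1+8h)\big|_{h=1/4}=1/6$. As $h\to 0$, $1-\sqrt{1-4h}\sim 2h$, so the logarithm behaves like $-\log h$, yielding $d(\lambda)\sim -h\log h\to 0$. Strict monotonicity of $\lambda\mapsto d(\lambda)$ follows from strict monotonicity of $h\mapsto d(h)$ and $h\mapsto\lambda(h)$ on $(0,1/4]$, each of which is a one-variable calculus check via differentiation.

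The main obstacle I anticipate is the bookkeeping in the peeling recursion, especially the type-I subtlety where a peeled triangle may close a loop at $\rho$, and the algebraic manipulation required to recognize the closed form. Once the one-step transitions are correctly enumerated, the generating-function computation reduces to repeated use of the explicit formula for $C_p(\lambda)$ and standard manipulations of the quadratic equation satisfied by the partition function of triangulations of a polygon.
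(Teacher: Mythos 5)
Your outline follows the direct peeling route: explore $\TT_\lambda$ to swallow $\rho$, write $1/k=\int_0^1 x^{k-1}\,\mathrm{d}x$, and solve a recursion for $\E[x^{\deg(\rho)}]$ using the explicit $C_p(\lambda)$. This is precisely the strategy used in \cite{B18these} to prove the type-II analogue (Proposition~\ref{prop_degree_II}), and the paper explicitly discusses and abandons it for the type-I case: the authors state that while in theory the computation is solvable, their efforts to push it to the end failed even with computer algebra, because the type-I formulas (loops at the boundary, multiple visits of $\rho$ to $\partial\expl(k)$) are substantially messier. The pivotal sentence in your proposal --- that ``after simplification, the polynomial contributions collapse and the logarithmic term \dots emerges'' --- is exactly the step the authors could not carry out, and you offer no computation to support it. So the obstacle you flag at the end is not bookkeeping but the crux of the argument, and as written the proposal does not establish the formula.

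The paper's actual proof is a genuinely different, softer route that bypasses the type-I peeling algebra entirely. It combines three ingredients: (i) the known closed form $d_{II}(\kappa)$ for the type-II PSHT (Proposition~\ref{prop_degree_II}); (ii) the type-I/type-II block decomposition (Proposition~\ref{prop_type_I-II}), which realizes $\TT_\lambda$ as $\TT^{II}_\kappa$ with each edge replaced by an i.i.d.\ block $A_\lambda$ (and the root block size-biased); and (iii) Lemma~\ref{lem_owusf_and_degree}, which says that for any stationary, reversible, transient random rooted graph, $\P\bigl(\overrightarrow{e}_0\in\overrightarrow{F}\bigr)=\E\bigl[1/\deg(\rho)\bigr]$, where $\overrightarrow{F}$ is the oriented wired uniform spanning forest. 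Computing $\P(\overrightarrow{e}_0\in\overrightarrow{F})$ then reduces to counting forest edges inside a typical block (one per inner vertex) plus the probability that a principal edge of the root block is in $\overrightarrow{F}'$, which equals $2d_{II}(\kappa)$ by stationarity of $\TT^{II}_\kappa$. Finally, your boundary-value expansions at $h=1/4$ and $h\to 0$ are correct, but ``a one-variable calculus check via differentiation'' undersells the monotonicity step: the paper sets $x=\sqrt{1-4h}$, reduces to an explicit inequality on $(0,1)$, and verifies it by a term-by-term power-series comparison, and that argument needs to be carried out rather than asserted.
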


We postpone the proof of Proposition \ref{prop_degree_pshit} until Section \ref{subsec_root_degree}, and finish the proof of Theorem \ref{main_thm}. We recall that we work with a subsequential limit of $(T_{n,g_n})$, and we know that it has the same distribution as $\TT_{\Lambda}$ for some random $\Lambda$. Our goal is to prove that $\Lambda$ is the solution to $d(\Lambda)=\frac{1-2\theta}{6}$. Note that by Proposition \ref{prop_degree_pshit}, the solution to this equation exists and is unique.

The idea is the following: by Proposition \ref{prop_same_map_same_lambda}, the random parameter $\Lambda$ only depends on the triangulation $T_{n,g_n}$ and not on its root. On the one hand, for any triangulation, the mean inverse root degree over all possible choices of the root is $\frac{1-2\theta}{6}$. Therefore, the mean inverse root degree over all triangulations corresponding to a fixed value of $\Lambda$ is $\frac{1-2\theta}{6}$. On the other hand, the mean inverse degree conditionally on $\Lambda$ is $d(\Lambda)$, so we should have $d(\Lambda)=\frac{1-2\theta}{6}$.

To prove this properly, we need a way to "read" $\Lambda$ on finite maps, which is the goal of the next (easy) technical lemma. As earlier, we restrict ourselves to a subset of values of $n$ along which $T_{n,g_n} \to \TT_{\Lambda}$.

\begin{lem}\label{lem_lambda_finite}
There is a function $\ell : \bigcup_{n \geq 1} \T(n,g_n) \longrightarrow [0,\lambda_c]$ such that we have the convergence
\[ \left( T_{n,g_n}, \ell(T_{n,g_n}) \right) \xrightarrow[n \to +\infty]{(d)} \left( \TT_{\Lambda}, \Lambda \right).  \]
\end{lem}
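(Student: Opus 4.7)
The plan is to construct $\ell$ by running a peeling exploration of the finite triangulation for a slowly growing number of steps $k_n$, and then inverting the asymptotic perimeter-to-volume ratio coming from \eqref{eqn_asymptotic_peeling_psht}. Fix once and for all a deterministic peeling algorithm $\mathcal{A}$, and for a triangulation $t \in \overline{\T}$ let $\expl_t(k)$ be the filled-in peeling of $t$ after $k$ steps, with the convention that at each splitting step one fills the side of smaller volume (with an arbitrary deterministic tiebreaker, and $\expl_t(k) = t$ if fewer than $k$ steps are possible). Let $P_t(k), V_t(k)$ be the perimeter and number of vertices of $\expl_t(k)$. Let $f : (0, \lambda_c] \to [0,1)$, $f(\lambda) = 1-4h$, be the continuous decreasing bijection appearing in \eqref{eqn_asymptotic_peeling_psht}, and fix a continuous extension $\phi : [0,1] \to [0,\lambda_c]$ of $f^{-1}$. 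For a sequence $(k_n)$ tending to $+\infty$ to be chosen below, set
\[ \ell(t) = \phi\bigl(P_t(k_n)/V_t(k_n)\bigr) \quad \text{for } t \in \T(n,g_n). \]

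For each fixed $k$, the map $t \mapsto (P_t(k), V_t(k))$ is a.s. continuous at $\TT_{\Lambda}$ for the local topology: since $\TT_{\Lambda}$ is a.s. planar, one-ended and has finite vertex degrees, each finite region filled in during a splitting step has a.s. finite diameter, so $\expl_{\TT_{\Lambda}}(k)$ is determined by $B_R(\TT_{\Lambda})$ for some a.s. finite radius $R = R(\TT_{\Lambda}, k)$. Moreover, for fixed $k$ and $n \to \infty$, with probability tending to $1$ every splitting step of the peeling of $T_{n, g_n}$ up to time $k$ separates a component of volume $\Theta(n)$ from a smaller component, so the "smaller-side" rule applied to $T_{n,g_n}$ coincides with the canonical filling of the only finite side that one uses in $\TT_{\Lambda}$. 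Combined with the local convergence $T_{n,g_n} \xrightarrow{(d)} \TT_{\Lambda}$, this yields for each fixed $k$
\[ \bigl( T_{n,g_n},\, \phi(P_{T_{n,g_n}}(k)/V_{T_{n,g_n}}(k)) \bigr) \xrightarrow[n \to \infty]{(d)} \bigl( \TT_{\Lambda},\, \phi(P^{\Lambda}(k)/V^{\Lambda}(k)) \bigr). \]

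Finally, by \eqref{eqn_asymptotic_peeling_psht} we have $P^{\Lambda}(k)/V^{\Lambda}(k) \to f(\Lambda)$ a.s. as $k \to \infty$, hence $\phi(P^{\Lambda}(k)/V^{\Lambda}(k)) \to \Lambda$ a.s. Fix a metric $\rho$ inducing weak convergence of probability measures on $\overline{\T} \times [0,\lambda_c]$, and let $\mu_{n,k}$, $\mu_{\infty,k}$, $\mu_\infty$ be the laws of, respectively, the left-hand side of the display above, the right-hand side, and $(\TT_{\Lambda}, \Lambda)$. The previous paragraph gives $\rho(\mu_{n,k}, \mu_{\infty,k}) \to 0$ as $n \to \infty$ for each fixed $k$, and the a.s. convergence $\phi(P^{\Lambda}(k)/V^{\Lambda}(k)) \to \Lambda$ gives $\rho(\mu_{\infty, k}, \mu_\infty) \to 0$ as $k \to \infty$. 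A standard diagonal extraction then produces $(k_n)$ with $k_n \to \infty$ and $\rho(\mu_{n, k_n}, \mu_\infty) \to 0$, which is the claim. The only mild obstacle is the continuity claim for the peeling at $\TT_{\Lambda}$: it comes down to the fact that under the PSHT, the diameter of each filled-in region at a splitting step is almost surely finite, so that only the ball of some random but a.s. finite radius around the root is involved in the first $k$ peeling steps.
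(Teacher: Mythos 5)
Your proof is correct, and it takes a genuinely different route from the paper's. The paper's argument is abstract: it observes that $\Lambda$ is a measurable function $\widetilde{\ell}$ of $\TT_{\Lambda}$, invokes Lusin-type approximation to replace $\widetilde{\ell}$ by continuous functions $\ell_{\eps}$ accurate up to probability $\eps$, and then diagonalizes over $\eps\to 0$ and $n\to\infty$. You instead build the continuous approximants explicitly from the $k$-step filled-in peeling and the ratio $P_t(k)/V_t(k)$, pushed through the (continuously extended) inverse of $f(\lambda)=1-4h$. Structurally both are the same ``approximate-then-diagonalize'' argument; the trade-off is that your route avoids Lusin's theorem at the price of having to verify that, for fixed $k$, the map $t\mapsto(P_t(k),V_t(k))$ is a.s.\ continuous at $\TT_{\Lambda}$ for $d_{\loc}$. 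You handle this correctly: the explored region $\expl_{\TT_{\Lambda}}(k)$ is a.s.\ contained in a ball of a.s.\ finite radius (using planarity, one-endedness and finiteness of degrees of the limit), and for $n$ large the ``smaller-side'' filling rule on the finite triangulation coincides with the ``finite-side'' rule on the limit because the outer component has volume $\geq n-O_k(1)$ while the inner one is bounded. The upside of your version is that $\ell$ is an explicit, combinatorially meaningful statistic; the upside of the paper's version is brevity. One small point worth making explicit: the a.s.\ continuity at $\TT_{\Lambda}$ is exactly what licenses the use of the continuous mapping theorem for the joint law $(T_{n,g_n},\phi(P/V))$, and it is the step where the results of Section~2 (planarity, one-endedness) feed in — without one-endedness the ``finite-side'' convention on the limit would not even be well-defined.
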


Note that a priori $\ell (t)$ may depend on the choice of the root of $t$, although Proposition \ref{prop_same_map_same_lambda} will guarantee that this is asymptotically not the case.

\begin{proof}
By the Skorokhod representation theorem, we may assume the convergence $T_{n,g_n} \to \TT_{\Lambda}$ is almost sure. As explained in the beginning of Section \ref{sec_ergodicity}, the parameter $\Lambda$ is a measurable function of $\TT_{\Lambda}$, so there is a measurable function $\widetilde{\ell}$ from the set of all (finite or infinite) triangulations such that $\widetilde{\ell}(\TT_{\Lambda})=\Lambda$ a.s. Therefore, for any $\eps>0$, we can find a continuous function (for the local topology) $\ell_{\eps}$ such that
\[ \P \left( \left| \ell_{\eps}(\TT_{\Lambda})-\Lambda \right| \geq \frac{\eps}{2} \right) \leq \frac{\eps}{2}.\]
For $n$ larger than some $N_{\eps}$, we have
\[ \P \left( \left| \ell_{\eps}(T_{n,g_n})-\Lambda \right| \geq \eps \right) \leq \eps.\]
Now let $\eps_k \to 0$ as $k \to +\infty$. We may assume that $(N_{\eps_k})$ is strictly increasing. For any triangulation $t \in \T(n,g_n)$, we set $\ell(t)=\ell_{\eps_k}(t)$, where $k$ is such that $N_{\eps_k} < n \leq N_{\eps_{k+1}}$. This ensures $\ell(T_{n,g_n}) \to \Lambda$ almost surely, so
\[ \left( T_{n,g_n}, \ell(T_{n,g_n}) \right) \xrightarrow[n \to +\infty]{(d)} \left( \TT_{\Lambda}, \Lambda \right).  \]
\end{proof}

We are now ready to prove our main Theorem.

\begin{proof}[Proof of Theorem \ref{main_thm}]
Let $f$ be a continuous, bounded function on $[0,\lambda_c]$. For any $n$, let $e_n^1$ and $e_n^2$ be two oriented edges chosen independently and uniformly in $T_{n,g_n}$, and let $\rho_n^1$ and $\rho_n^2$ be their starting points. We will estimate in two different ways the limit of the quantity
\begin{equation}\label{eqn_inverse_degree_and_f}
\E \left[ \frac{1}{\deg(\rho_n^1)} f(\ell(T_{n,g_n},e_n^1)) \right].
\end{equation}
On the one hand, the quantity in the expectation is bounded and converges in distribution to $\frac{1}{\deg(\rho)} f(\Lambda)$, where $\rho$ is the root vertex of $\TT_{\Lambda}$, so \eqref{eqn_inverse_degree_and_f} goes to
\[ \E \left[ \frac{1}{\deg(\rho)} f(\Lambda) \right] = \E \left[ d(\Lambda) f(\Lambda) \right]\]
as $n \to +\infty$. On the other hand, by Proposition \ref{prop_same_map_same_lambda} and Lemma \ref{lem_lambda_finite}, we have
\[ \left( \ell(T_{n,g_n}, e_n^1), \ell(T_{n,g_n}, e_n^2) \right) \xrightarrow[n \to +\infty]{(d)} \left( \Lambda, \Lambda \right), \]
so $\ell(T_{n,g_n}, e_n^1) - \ell(T_{n,g_n}, e_n^2)$ goes to $0$ in probability. Since $f$ is bounded and uniformly continuous, the difference $f \left( \ell(T_{n,g_n}, e_n^1) \right) - f \left( \ell(T_{n,g_n}, e_n^2) \right)$ goes to $0$ in $L^1$, so we have
\[ \E \left[ \frac{1}{\deg(\rho_n^1)} f(\ell(T_{n,g_n},e_n^1)) \right] = \E \left[ \frac{1}{\deg(\rho_n^1)} f(\ell(T_{n,g_n},e_n^2)) \right] +o(1).\]
Moreover, the expectation of $\frac{1}{\deg(\rho_n^1)}$ conditionally on $(T_{n,g_n},e_n^2)$ is equal to $\frac{|V(T_{n,g_n})|}{2|E(T_{n,g_n})|}=\frac{n+2-2g_n}{6n}$, so we can write
\[ \E \left[ \frac{1}{\deg(\rho_n^1)} f(\ell(T_{n,g_n},e_n^1)) \right] = \frac{n+2-2g_n}{6n} \E \left[ f(\ell(T_{n,g_n},e_n^2)) \right] +o(1) \xrightarrow[n \to +\infty]{} \frac{1-2\theta}{6} \E \left[ f(\Lambda) \right].\]
Therefore, for any bounded, continuous $f$ on $[0,\lambda_c]$, we have
\[ \E \left[ d(\Lambda) f(\Lambda) \right]=\frac{1-2\theta}{6} \E \left[ f(\Lambda) \right],\]
so $d(\Lambda)=\frac{1-2\theta}{6}$ a.s. By injectivity of $d$, this fixes a deterministic value for $\Lambda$ that depends only on $\theta$, which concludes the proof.
\end{proof}

\subsection{The average root degree in the type-I PSHT via uniform spanning forests}
\label{subsec_root_degree}

Our goal is now to prove Proposition \ref{prop_degree_pshit}. Since we have not been able to perform a direct computation, our strategy will be to use the similar computation for the type-II PSHIT that is done in \cite[Appendix B]{B18these}. To link the mean degree in the type-I and in the type-II PSHT, we use the core decomposition of the type-I PSHT \cite[Appendix A]{B18these} and an interpretation in terms of the Wired Uniform Spanning Forest.

\paragraph{The type-II case.}
We will need to use the computation of the same quantity in the type-II case, which is performed in \cite[Appendix B]{B18these}. We first recall briefly the definition of the type-II PSHT \cite{CurPSHIT}. These are the type-II analogues of the type-I PSHT, i.e. they may contain multiple edges but no loop. They form a one-parameter family $\left( \TT_{\kappa}^{II} \right)_{0 < \kappa \leq \kappa_c}$ of random infinite triangulations of the plane, where $\kappa_c=\frac{2}{27}$. Their distribution is characterized as follows: for any type-II triangulation with a hole of perimeter $p$ and $v$ vertices in total, we have
\[ \P \left( t \subset \TT^{II}_{\kappa} \right) = C_p^{II}(\kappa) \times \kappa^v,\]
where the numbers $C_p^{II}(\kappa)$ are explicitely determined by $\kappa$.

\begin{prop}\cite[Appendix B]{B18these}\label{prop_degree_II}
For $\kappa \in (0,\kappa_c]$, let
\[ d_{II}(\kappa)=\E \left[ \frac{1}{\deg_{\TT^{II}_{\kappa}}(\rho)} \right].\]
Then we have
\[d_{II}(\kappa)=-\frac{1-\alpha}{2} + \frac{(1-\alpha)\sqrt{\alpha}}{2 \sqrt{3\alpha-2}} \log \frac{\sqrt{\alpha}+\sqrt{3\alpha-2}}{\sqrt{\alpha}-\sqrt{3\alpha-2}},\]
where $\alpha \in \left[ \frac{2}{3}, 1 \right)$ is such that $\kappa=\frac{\alpha^2(1-\alpha)}{2}$.
\end{prop}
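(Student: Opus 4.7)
The plan is to compute $\P\bigl(\deg_{\TT^{II}_\kappa}(\rho) = n\bigr)$ in closed form from the spatial Markov property, and then evaluate $d_{II}(\kappa) = \sum_{n\ge 2} \tfrac{1}{n}\P(\deg(\rho) = n)$ via a generating function. The key observation is that in the type-II setting (no loops at any vertex), the faces incident to $\rho$ form a wheel whose boundary is a simple cycle of length $\deg(\rho)$, so the ``canonical'' rooted wheel $w_n$ of degree $n$ is a triangulation with a single hole of perimeter $n$ and $n+1$ vertices. By the defining property of $\TT^{II}_\kappa$, $\P(w_n \subset \TT^{II}_\kappa) = C_n^{II}(\kappa)\,\kappa^{n+1}$. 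After handling the (easier) contributions of multi-edges at $\rho$, either directly or via a filled-in peeling exploration targeting $\rho$, one obtains an explicit formula for $\P(\deg(\rho) = n)$, and hence a closed form for the probability generating function $F(x) = \E[x^{\deg_{\TT^{II}_\kappa}(\rho)}]$. The summation combines the explicit form of $C_p^{II}(\kappa)$ (a finite sum of central binomial coefficients weighted by powers of an explicit function of $\alpha$) with the identity $\sum_{q\ge 0}\binom{2q}{q} y^q = (1-4y)^{-1/2}$, which should produce an algebraic function of $(x,\alpha)$ with a single square-root singularity.

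Using $\tfrac{1}{n}=\int_0^1 x^{n-1}\,dx$ and Tonelli, $d_{II}(\kappa) = \int_0^1 F(x)/x\,dx$. The appearance of $\sqrt{3\alpha - 2}$ in the statement strongly suggests that the discriminant of the relevant quadratic in $x$ is proportional to $3\alpha - 2$; a substitution that rationalises the square root should then turn the integral into the standard form $\int du/(A - Bu^2)$, producing the logarithm $\log\tfrac{\sqrt{\alpha}+\sqrt{3\alpha-2}}{\sqrt{\alpha}-\sqrt{3\alpha-2}}$ in the answer, while the polynomial term $-(1-\alpha)/2$ should come from a boundary contribution at $x = 1$. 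As sanity checks, the formula yields $d_{II}(\kappa_c) = 1/6$ (after applying L'Hôpital at $\alpha = 2/3$, where $3\alpha-2 = 0$) and $\lim_{\kappa \to 0} d_{II}(\kappa) = 0$ as $\alpha \to 1$, consistent respectively with the UIPT mean inverse root degree and with the ``infinite-degree'' degeneracy at $\kappa \to 0$.

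The main obstacle is the explicit algebraic evaluation of $F(x)$ and the resulting integral. The combinatorial preamble (the wheel identification and the application of the spatial Markov property) is conceptually straightforward, but the subsequent manipulations demand a careful choice of variables to keep the expressions compact enough to recognise the rational-plus-logarithmic shape announced in the statement. Since this is precisely the computation carried out in \cite[Appendix B]{B18these}, the expected level of technicality is moderate but non-negligible, and the cleanest writeup is likely to perform the perimeter sum \emph{before} the integration in $x$, so that the algebraic generating function appearing under the integral sign has as simple a shape as possible.
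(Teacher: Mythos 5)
First, note that the paper does not prove this proposition: it is quoted verbatim from \cite[Appendix~B]{B18these}, and the authors explicitly describe the method used there, namely a peeling exploration that produces a functional equation on the generating function (indexed by the boundary perimeter) of the mean inverse root degree in PSHT \emph{with boundaries}, which is then solved. Your route --- compute the law of $\deg(\rho)$ in the full-plane $\TT^{II}_{\kappa}$, form $F(x)=\E[x^{\deg(\rho)}]$, and integrate $\int_0^1 F(x)/x\,\mathrm{d}x$ --- is structurally different, and the integration trick and your two sanity checks at $\alpha=2/3$ and $\alpha\to 1$ are fine.

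However, there is a genuine gap at the very first step, which you dismiss as "conceptually straightforward". In a type-II triangulation the union of the faces incident to $\rho$ is \emph{not} in general a wheel with a simple boundary cycle: since multiple edges are allowed, two neighbours of $\rho$ may coincide, in which case the boundary walk of the star of $\rho$ is non-simple and encloses finite regions that must be filled in. Consequently $\{\deg(\rho)=n\}$ is not the event $\{w_n\subset \TT^{II}_{\kappa}\}$ but a union over infinitely many distinct hull configurations with varying perimeters and volumes, and $C_n^{II}(\kappa)\kappa^{n+1}$ is only one term of that sum. This is not a minor correction: it is where all the work lies. Nor does "a filled-in peeling exploration targeting $\rho$" resolve it for free, because the filled-in finite regions themselves contain faces incident to $\rho$, and their number is not a function of the perimeter/volume Markov chain; one must additionally track the degree contribution of a marked boundary vertex in Boltzmann triangulations of polygons. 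Doing this systematically is exactly what forces the introduction of the perimeter-indexed generating function and the functional equation of \cite[Appendix~B]{B18these}. As written, your proposal names the hard step without supplying a mechanism for it, so the claimed "explicit formula for $\P(\deg(\rho)=n)$" is not established.
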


In \cite{B18these}, Proposition \ref{prop_degree_II} is proved by using a peeling exploration to obtain an equation on the generating function of the mean inverse root degree in PSHT with boundaries. While this approach could theoretically also work in the type-I setting, some technical details make the formulas much more complicated. Although everything remains in theory completely solvable, our efforts to push the computation until the end have failed, even with a computer algebra software.
Therefore, we will rely on Proposition \ref{prop_degree_II} and on the correspondence between type-I PSHT and type-II PSHT given in \cite[Appendix A]{B18these}.

\paragraph{The type-I--type-II correspondence.}
We now introduce the correspondence between the type-I and the type-II PSHT stated in \cite[Appendix A]{B18these}. We write $\beta=1-\frac{1+2h}{\sqrt{1+8h}}$, and $\kappa=\frac{h}{(1+2h)^3}$, where $h$ is given by \eqref{eqn_defn_h}. We also define a random triangulation $A_{\lambda}$ of the $2$-gon as follows. We start from two vertices $x$ and $y$ linked by $B$ edges, where $B-1$ is a geometric variable of parameter $\beta$. In each of the $B-1$ slots between these edges, we insert a loop (the loops are glued to $x$ or $y$ according to independent fair coin flips). Finally, these $B-1$ loops are filled with i.i.d. Boltzmann triangulations of the $1$-gon with parameter $\lambda$ (see Figure \ref{fig_a_lambda}). Copies of $A_{\lambda}$ will be called \emph{blocks} in what follows.

\begin{figure}
\centering
\includegraphics[scale=0.6]{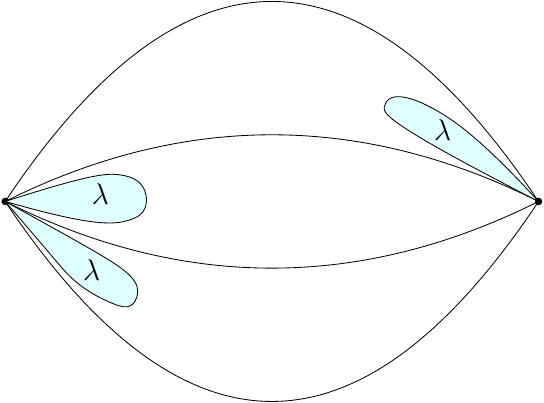}
\caption{The random block $A_{\lambda}$ (here we have $B=4$). The green parts are i.i.d. $\lambda$-Boltzmann triangulations of the $1$-gon.}\label{fig_a_lambda}
\end{figure}

Let $|A_{\lambda}|$ be the number of inner vertices of $A_{\lambda}$ (i.e. $x$ and $y$ excluded), and let $E(A_{\lambda})$ be its set of edges (including the boundary edges). The Euler formula implies $|E(A_{\lambda})|=1+3|A_{\lambda}|$. Let $\widetilde{A}_{\lambda}$ be a random triangulation with the same distribution as $A_{\lambda}$, biased by $|E(A_{\lambda})|$. If $T$ is an infinite type-II triangulation of the plane and $(a^e)_{e \in E(T)}$ are blocks, we denote by $\Phi \left( T, \left( a^e \right) \right)$ the triangulation obtained from $T$ by replacing each edge $e$ of $T$ by $a^e$. After elementary computations, the correspondence shown in \cite[Appendix A]{B18these} can be reformulated as follows.

\begin{prop}\label{prop_type_I-II}
The triangulation $\TT_{\lambda}$ has the same law as $\Phi \left( \TT^{II}_{\kappa}, \left( A^e_{\lambda} \right)_{e \in E(\TT^{II}_{\kappa})} \right)$, where conditionally on $\TT^{II}_{\kappa}$:
\begin{itemize}
\item
the blocks $A^e_{\lambda}$ are independent;
\item
the block $A^{e_0^{II}}_{\lambda}$, where $e_0^{II}$ is the root edge of $\TT^{II}_{\kappa}$, has the law of $\widetilde{A}_{\lambda}$, rooted at a uniform oriented edge of $\widetilde{A}_{\lambda}$;
\item
for $e \ne e_0^{II}$, the block $A^e_{\lambda}$ has the law of $A_{\lambda}$.
\end{itemize}
\end{prop}

In the remainder of this section, we will consider that $\TT_{\lambda}$ and $\TT^{II}_{\kappa}$ are coupled together as described by Proposition \ref{prop_type_I-II}. It will also be useful to consider the triangulation obtained from $\TT^{II}_{\kappa}$ in a similar way, but where $A^{e_0^{II}}_{\lambda}$ has the law of $A_{\lambda}$ instead of $\widetilde{A}_{\lambda}$. We denote this triangulation by $\TT'_{\lambda}$. We note right now that $\TT_{\lambda}$ is absolutely continuous with respect to $\TT'_{\lambda}$.

\paragraph{The wired uniform spanning forest.}
The last ingredient that we need to prove Proposition \ref{prop_degree_pshit} is the oriented wired uniform spanning forest (OWUSF) \cite{BLPS01}, which may be defined on any transient graph. We first recall its definition. We recall that if $(\gamma_i)$ is a transient path, its \emph{loop-erasure} is the path $(\gamma_{i_j})$, where $i_0=0$ and $i_{j+1}=1+\max \{ i \geq i_j | \gamma(i)=\gamma(i_j) \}$. If $G$ is an infinite transient graph, its OWUSF $\overrightarrow{F}$ is generated by the following generalization of the Wilson algorithm \cite{W96}.
\begin{enumerate}
\item
We order the vertices of $G$ as $(v_n)_{n \geq 1}$.
\item
We run a simple random walk $(X_n)$ on $G$ started from $v_1$, and consider its loop-erasure, which is an oriented path from $v_1$ to $\infty$. We add this path to $\overrightarrow{F}$.
\item
We consider the first $i$ such that $v_i$ does not belong to any edge of $\overrightarrow{F}$ yet. We run a SRW from $v_i$, stopped when it hits $\overrightarrow{F}$, and add its loop-erasure to $\overrightarrow{F}$.
\item
We repeat step 3 infinitely many times.
\end{enumerate}
It is immediate that $\overrightarrow{F}$ is an oriented forest such that for any vertex $v$ of $G$, there is exactly one oriented edge of $\overrightarrow{F}$ going out of $v$. It can be checked that the distribution of $\overrightarrow{F}$ does not depend on the way the vertices are ordered.

\paragraph{Stationarity and reversibility.}
We recall that a random rooted graph $G$ is \emph{reversible} if its distribution is invariant under reversing the orientation of the root edge. It is \emph{stationary} if its distribution is invariant under re-rooting $G$ at the first step of the simple random walk. In particular, we recall from \cite{CurPSHIT} that the type-II PSHT are stationary and reversible. Moreover, the proof of \cite{CurPSHIT} also works for the type-I PSHT.
Similarly, a random rooted, forest-decorated graph $\left( G, \overrightarrow{e}_0, \overrightarrow{F} \right)$ is called \emph{reversible} (resp. stationary) if its distribution is invariant under reversing the root edge (resp. rerooting at the first step of the SRW).

If two random, rooted transient graphs $(G, \overrightarrow{e}_0)$ and $(G', \overrightarrow{e}'_0)$ have the same distribution and if $\overrightarrow{F}$ and $\overrightarrow{F}'$ are their respective OWUSF, then $(G, \overrightarrow{e}_0, \overrightarrow{F})$ and $(G', \overrightarrow{e}'_0, \overrightarrow{F}')$ also have the same distribution. It follows that if $(G, \overrightarrow{e}_0)$ is stationary and reversible, so is the triplet $(G, \overrightarrow{e}_0, \overrightarrow{F})$.
The link between the OWUSF and the mean inverse root degree is given by the next lemma.

\begin{lem}\label{lem_owusf_and_degree}
Let $(G,\overrightarrow{e_0})$ be a stationnary, reversible and transient random rooted graph, and let $\rho$ be the starting point of the root edge $\overrightarrow{e_0}$. We denote by $\overrightarrow{F}$ the OWUSF of $G$. Then
\[\P \left( \overrightarrow{e_0} \in \overrightarrow{F} \right)= \, \E \left[ \frac{1}{\deg_G(\rho)} \right].\]
\end{lem}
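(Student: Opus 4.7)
The plan is to exploit the defining property of the OWUSF produced by Wilson's algorithm: every vertex of $G$ has exactly one outgoing edge in $\overrightarrow{F}$. If one can argue that, conditionally on the triple $(G, \rho, \overrightarrow{F})$, the root edge $\overrightarrow{e_0}$ is uniformly distributed among the $\deg_G(\rho)$ oriented edges with tail $\rho$, then
\[ \P\bigl( \overrightarrow{e_0} \in \overrightarrow{F} \,\big|\, G, \rho, \overrightarrow{F} \bigr) = \frac{1}{\deg_G(\rho)}, \]
and taking expectations yields the lemma.

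First I would observe that stationarity and reversibility transfer from $(G, \overrightarrow{e_0})$ to the augmented triple $(G, \overrightarrow{e_0}, \overrightarrow{F})$. This is because Wilson's algorithm produces $\overrightarrow{F}$ from $G$ using auxiliary independent randomness, and its output distribution is unchanged by the order in which the vertices are listed; in particular, $\overrightarrow{F}$ does not depend on the choice of the root. Thus applying one SRW step to $\overrightarrow{e_0}$ while leaving $\overrightarrow{F}$ unchanged preserves the distribution of the triple.

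The key step is to derive the uniformity of $\overrightarrow{e_0}$ from stationarity. Let $\overrightarrow{e_1}$ be the first step of the simple random walk, whose tail $\rho_1$ is the endpoint of $\overrightarrow{e_0}$. By the very definition of the SRW step, conditionally on $(G, \rho_1, \overrightarrow{F})$ the edge $\overrightarrow{e_1}$ is uniform on the oriented edges with tail $\rho_1$ (the uniform sample is performed independently of everything else given $\rho_1$). Stationarity says $(G, \overrightarrow{e_0}, \overrightarrow{F}) \stackrel{d}{=} (G, \overrightarrow{e_1}, \overrightarrow{F})$, so the conditional law of $\overrightarrow{e_0}$ given $(G, \rho, \overrightarrow{F})$ equals the conditional law of $\overrightarrow{e_1}$ given $(G, \rho_1, \overrightarrow{F})$; hence it is also uniform on the oriented edges with tail $\rho$.

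The conclusion is then immediate: combining the two observations, $\P(\overrightarrow{e_0} \in \overrightarrow{F} \mid G, \rho, \overrightarrow{F}) = 1/\deg_G(\rho)$, and integrating gives the claim. I don't expect any real obstacle; the only subtle point is the transfer of stationarity to the triple, which hinges on the fact that the OWUSF construction is intrinsic to $G$ and independent of the choice of root. Note that reversibility is not actually needed for this particular argument, although it is harmless to assume it.
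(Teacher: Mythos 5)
Your proof is correct and essentially matches the paper's: both hinge on (i) the transfer of stationarity to the triple $(G, \overrightarrow{e_0}, \overrightarrow{F})$ and (ii) the fact that $\overrightarrow{F}$ has exactly one outgoing edge at each vertex. The one genuine difference is the choice of $\overrightarrow{e_1}$: the paper picks $\overrightarrow{e_1}$ uniformly among the oriented edges leaving $\rho$ (the tail of $\overrightarrow{e_0}$), which amounts to first reversing the root edge and then taking an SRW step, so both reversibility and stationarity are invoked; you instead take the actual forward SRW step from the head of $\overrightarrow{e_0}$, which needs stationarity alone, so your remark that reversibility can be dropped is correct and a genuine (if minor) simplification. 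One small caveat: the claim that $\overrightarrow{e_1}$ is uniform conditionally on $(G, \rho_1, \overrightarrow{F})$ is not literally ``by definition of the SRW step,'' since the step is defined conditionally on the finer $\sigma$-algebra generated by $(G, \overrightarrow{e_0}, \overrightarrow{F})$; it follows by the tower property because the conditional law of $\overrightarrow{e_1}$ given $(G, \overrightarrow{e_0}, \overrightarrow{F})$ depends only on $(G, \rho_1)$. With that one step made explicit, your argument is airtight.
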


\begin{proof}
Conditionally on $G$, let $\overrightarrow{e_1}$ be a random oriented edge chosen uniformly (independently of $\overrightarrow{F}$) among all the edges leaving the root vertex. By invariance and reversibility of $(G, \overrightarrow{e_0})$, we know that $(G, \overrightarrow{e_1}, \overrightarrow{F})$ has the same distribution as $(G, \overrightarrow{e_0}, \overrightarrow{F})$. Therefore, we have
\[\P \left( \overrightarrow{e_0} \in \overrightarrow{F} \right) = \P \left( \overrightarrow{e_1} \in \overrightarrow{F} \right) = \E \left[ \P \left( \overrightarrow{e_1} \in \overrightarrow{F} | G,\overrightarrow{e_0}, \overrightarrow{F} \right) \right] = \E \left[ \frac{1}{\deg_G(\rho)} \right],\]
where in the end we use the fact that, by construction, $\overrightarrow{F}$ contains exactly one oriented edge going out of the root vertex.
\end{proof}

\begin{rem}
Lemma \ref{lem_owusf_and_degree} is the analogue for stationary, reversible graphs of the fact that the WUSF on any unimodular random graph has expected root degree $2$ \cite[Theorem 7.3]{AL07}. These two properties can also be deduced from one another.
\end{rem}

\begin{proof}[Proof of Proposition \ref{prop_degree_pshit}]
By an easy uniform integrability argument, the function $d$ is continuous in $\lambda$, so it is enough to prove the formula for $\lambda<\lambda_c$. In this case, all variants of $\TT_{\lambda}$ that we will consider are a.s. transient by results from \cite{CurPSHIT}, so the OWUSF on these graphs is well defined. We now fix $\lambda \in (0,\lambda_c)$. Let $\beta$ and $\kappa$ be given by Proposition \ref{prop_type_I-II}. We will denote by $\overrightarrow{e}_0$, $\overrightarrow{e}'_0$ and $\overrightarrow{e}_0^{II}$ the respective root edges of $\TT_{\lambda}$,  $\TT'_{\lambda}$ and $\Tii$, and by $\overrightarrow{F}$, $\overrightarrow{F}'$ and $\overrightarrow{F}^{II}$ their respective OWUSF.

By Proposition \ref{prop_type_I-II}, conditionally on $\Tii$ and on the blocks $(A_{\lambda}^e)_{e \in E(\Tii)}$, the edge $\overrightarrow{e}_0$ is picked uniformly among the oriented edges of $A_{\lambda}^{e_0^{II}}$. Moreover, this choice is also independent of $\overrightarrow{F}$. Therefore, we have
\[ \P \left( \overrightarrow{e_0} \in \overrightarrow{F} \right) = \E \left[ \P \left( \overrightarrow{e_0} \in \overrightarrow{F} | \Tii, (A^e_{\lambda}), \overrightarrow{F} \right) \right] = \E \left[ \frac{|E(\widetilde{A}_{\lambda}) \cap \overrightarrow{F}|}{2 |E(\widetilde{A}_{\lambda})|} \right], \]
where we recall that $E(\widetilde{A}_{\lambda})$ is the set of edges of $\widetilde{A}_{\lambda}$. Moreover, the pair $(\TT_{\lambda}, \overrightarrow{F})$ has the same distribution as $(\TT'_{\lambda}, \overrightarrow{F}')$ biased by $|E(A_{\lambda}^{e_0})|$. Therefore, we can write
\begin{equation}\label{formula_in_T'}
\P \left( \overrightarrow{e_0} \in \overrightarrow{F} \right) = \frac{\E \left[ |E(A_{\lambda}^{e_0}) \cap \overrightarrow{F}'| \right] }{2 \E \left[ |E(A_{\lambda})| \right]}.
\end{equation}
Since the denominator can easily be explicitely computed by using the definition of the block $A_{\lambda}$, we first focus on the numerator. In a block, we call \emph{principal edges} the edges joining the two boundary vertices of the block. We know that there is exactly one edge of $\overrightarrow{F}'$ going out of every inner vertex of $A_{\lambda}$. Moreover, if a non-principal edge of $A_{\lambda}$ belongs to $\overrightarrow{F}'$, then it goes out of an inner vertex (it cannot go out of a boundary vertex since the edges of $\overrightarrow{F}'$ are "directed towards infinity"). Therefore, the number of non-principal edges of $A_{\lambda}$ that belong to $\overrightarrow{F}'$ is exactly $|A_{\lambda}|$.
On the other hand, since $\overrightarrow{F}'$ is a forest, it contains at most one principal edge of $A_{\lambda}$. Therefore, we have
\[ \E \left[ |E(A_{\lambda}^{e_0}) \cap \overrightarrow{F}'| \right]=\E[|A_{\lambda}|] + \P \left( \overrightarrow{F}' \mbox{ contains a principal edge of $A_{\lambda}^{e_0}$} \right). \]

To finish the computation, we claim that
\begin{equation}\label{eqn_principal_edge}
\P \left( \overrightarrow{F}' \mbox{ contains a principal edge of $A_{\lambda}^{e_0}$} \right) = 2\E \left[ \frac{1}{\deg_{\Tii} (\rho)} \right]=2d_{II}(\kappa).
\end{equation}
Indeed, let $\rho^{II}$ be the starting vertex of $\overrightarrow{e}_0^{II}$. By reversibility of $\Tii$, the left-hand side of \eqref{eqn_principal_edge} is twice the probability that $\overrightarrow{F}'$ contains a principal edge of $A_{\lambda}^{e_0}$ going out of $\rho^{II}$. Moreover, since $\Tii$ is stationary, the distribution of $\TT'_{\lambda}$ is stable under the following operation:
\begin{itemize}
\item[$\bullet$]
first, we resample the root edge of $\Tii$ uniformly among all the oriented edges going out of $\rho^{II}$;
\item[$\bullet$]
then, we pick the new root edge of $\TT'_{\lambda}$ uniformly among all the oriented edges of the block corresponding to the new root edge of $\Tii$.
\end{itemize}
Note that this operation is different from resampling the root of $\TT'_{\lambda}$ uniformly among the edges going out of its root vertex, and that $\TT'_{\lambda}$ is \emph{not} stationary.

Since $\TT'_{\lambda}$ is stationary for this operation, so is $\left( \TT'_{\lambda}, \overrightarrow{F}' \right)$. On the other hand, there is exactly one block incident to $\rho^{II}$ in which a principal edge going out of $\rho^{II}$ belongs to $\overrightarrow{F}'$. By the same argument as in the proof of Lemma \ref{lem_owusf_and_degree}, this implies \eqref{eqn_principal_edge}. By combining Lemma \ref{lem_owusf_and_degree} with \eqref{formula_in_T'} and \eqref{eqn_principal_edge}, we obtain
\[ d(\lambda)=\frac{\E[|A_{\lambda}|]+2d_{II}(\kappa)}{2 \left( 1+3\E[|A_{\lambda}|] \right)}. \]
By the definition of $A_{\lambda}$, we also have
\[ \E[|A_{\lambda}|] = \frac{\beta}{1-\beta} \times \frac{\lambda Z'_1(\lambda)}{Z_1(\lambda)} = \frac{2h}{1+2h} ,\]
where $Z_1(\lambda)$ is the partition function of $\lambda$-Boltzmann type-I triangulations of a $1$-gon. Finally, the value of $d_{II}(\kappa)$ is given by Proposition \ref{prop_degree_II}. It is easy to obtain that the $\alpha$ of Proposition \ref{prop_degree_II} is equal to $\frac{1}{1+2h}$, and we get the formula for $d(\lambda)$.

It remains to prove that this is an increasing function of $\lambda$ (or equivalently of $h$). For this, set $x=\sqrt{1-4h}$. We want to prove that the function
\[ x \to \frac{1-x^2}{4x(3-2x^2)} \log \frac{1+x}{1-x}\]
is decreasing on $[0,1]$. By computing the derivative with respect to $x$, this is equivalent to
\[ (2x^4-3x^2+3)x\log \frac{1+x}{1-x} \geq 2x^2(3-2
x^2)\]
for $0<x<1$. The power series expansion of the left-hand side is
\[6x^2-4x^4+\sum_{k \geq 3} \left( \frac{6}{2k-1}-\frac{6}{2k-3}+\frac{4}{2k-5} \right)x^{2k} \geq 6x^2-4x^4.\]
Since all the terms in the sum are positive, this proves monotonicity.
\end{proof}

\section{Combinatorial asymptotics}

The goal of this section is to prove Theorem \ref{thm_asympto}. The proof relies on the lemma below, which is an easy consequence of Theorem \ref{main_thm}. We recall that $\tau(n,g)$ is the number of triangulations of genus $g$ with $2n$ faces, and that $\lambda(\theta)$ is the solution to \eqref{eqn_lambda_vs_theta}.

\begin{lem}\label{prop_rapport_voisins}
Let $(g_n)$ be such that $\frac{g_n}{n}\rightarrow \theta$, with $0\leq \theta <\frac{1}{2}$.
Then 
\[\frac{\tau(n-1,g_n)}{\tau(n,g_n)}\rightarrow \lambda(\theta)\]
\end{lem}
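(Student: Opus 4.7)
The plan is to realize the ratio $\tau(n-1,g_n)/\tau(n,g_n)$ as the probability of a specific local event in $T_{n,g_n}$, and then pass to the limit using Theorem \ref{main_thm}.

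First, I would construct a concrete planar triangulation $t_0$ with a hole of perimeter $1$ and exactly $v=2$ vertices. Take two vertices $A,B$, a single edge $e$ between them (which we declare to be the root, oriented from $A$ to $B$), and a loop $\ell$ at $A$. On one side of $\ell$ sits the $1$-gon hole; on the other side sits a single triangular face whose boundary reads $\ell, e, e$ (traversing $e$ on both sides). A quick check of Euler's relation ($V-E+F=2-2+2$) confirms planarity, so $t_0$ is a legitimate element of $\T_1(0,0)$ with $2$ vertices.

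Next, I would count the fillings. Given any planar triangulation $t$ with a single hole of perimeter $p$ and $v$ vertices in total, a vertex-and-triangle count (using the Euler formula on both $t$ and the filling) shows that filling its hole to produce an element of $\T(n,g_n)$ requires a triangulation of the $p$-gon in $\T_p(n+p-v,g_n)$, and conversely every such filling produces a valid element of $\T(n,g_n)$ containing $t$ with matched roots. Specialising to $t_0$ ($p=1$, $v=2$) and then applying the bijection of Remark \ref{rem_root_transfo} to identify $\tau_1(\cdot,\cdot)=\tau(\cdot,\cdot)$, this yields the identity
\[
\mathbb{P}(t_0\subset T_{n,g_n})=\frac{\tau_1(n-1,g_n)}{\tau(n,g_n)}=\frac{\tau(n-1,g_n)}{\tau(n,g_n)}.
\]

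Finally, I would invoke Theorem \ref{main_thm}: we have $T_{n,g_n}\xrightarrow{(d)}\mathbb{T}_{\lambda(\theta)}$ for the local topology. Since $t_0$ is finite, the event $\{t_0\subset\cdot\}$ is determined by a bounded neighbourhood of the root, so convergence in distribution gives
\[
\mathbb{P}(t_0\subset T_{n,g_n})\;\longrightarrow\;\mathbb{P}(t_0\subset\mathbb{T}_{\lambda(\theta)})=C_1(\lambda(\theta))\,\lambda(\theta)^{2}.
\]
Finally, the explicit formula \eqref{eqn_cp_psht} gives $C_1(\lambda)=1/\lambda$, so the right-hand side collapses to $\lambda(\theta)$, which is the claim. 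There is no real obstacle in this argument; the only step requiring a small amount of care is producing an explicit $t_0$ whose $(p,v)$ parameters make the formula $\tau_p(n+p-v,g_n)/\tau(n,g_n)$ coincide exactly with the desired ratio, which is why we take $p=1$ and $v=2$ and use $\tau_1=\tau$.
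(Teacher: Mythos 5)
Your proof is correct and follows essentially the same route as the paper: the paper also introduces a triangulation with a hole of perimeter $1$ and volume $2$ (shown in its Figure~\ref{fig_one_gon}, which matches your explicit $t_0$), identifies the ratio with $\P(t_1 \subset T_{n,g_n})$ via the filling count and the root transformation of Remark~\ref{rem_root_transfo}, and passes to the limit using Theorem~\ref{main_thm} together with $C_1(\lambda)\lambda^2 = \lambda$. The only cosmetic difference is that you spell out the construction of $t_0$ and verify Euler's formula explicitly, where the paper just points to the figure.
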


\begin{proof}
This is a simple consequence of Theorem \ref{main_thm}. Indeed, we have $(T_{n,g_n})\rightarrow \TT_{\lambda(\theta)}$ locally. Let $t_1$ be the triangulation represented on Figure \ref{fig_one_gon}. We have on the one hand
\[ \P \left( t_1 \subset T_{n,g_n} \right)= \frac{\tau_1(n-1,g_n)}{\tau(n,g_n)}=\frac{\tau(n-1,g_n)}{\tau(n,g_n)}\]
by the usual root transformation (see Remark \ref{rem_root_transfo}). On the other hand, we have
\[\P \left( t_1 \subset \TT_{\lambda(\theta)} \right) = C_1(\lambda(\theta))\lambda(\theta)^2=\lambda(\theta).\]
Since $\{ t_1 \subset T\}$ is closed and open for $d_{\loc}$, the lemma follows.

\begin{figure}
\centering
\includegraphics[scale=0.5]{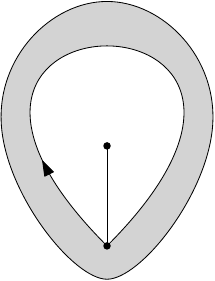}
\caption{The triangulation $t_1$ with perimeter $1$ and volume $2$.}
\label{fig_one_gon}
\end{figure}
\end{proof}

The idea of the proof of Theorem \ref{thm_asympto} is then to write
\begin{equation}\label{eqn_decomposition_taung}
\tau(n,g_n)=\frac{\tau(n,g_n)}{\tau((2+\eps)g_n,g_n)} \times \tau((2+\eps)g_n,g_n)
\end{equation}
for some small $\eps>0$. The first factor can be turned into a telescopic product and estimated by Lemma \ref{prop_rapport_voisins}. The trickier part will be to estimate the second. We will prove the following bounds.

\begin{prop}\label{prop_near_diagonal}
There is a function $h$ with $h(\eps)\to 0$ as $\eps \to 0$ such that
\[e^{o(g)} \left(\frac{6}{e} \right)^{2g} (2g)^{2g} \leq \tau((2+\eps)g,g) \leq e^{h(\eps) g+o(g)} \left(\frac{6}{e} \right)^{2g} (2g)^{2g}\]
as $g \to +\infty$. Moreover, if $\eps_g \to 0$, then
\[ \tau((2+\eps_g)g,g)=e^{o(g)} \left(\frac{6}{e} \right)^{2g} (2g)^{2g}.\]
\end{prop}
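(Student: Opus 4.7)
The plan is to combine the Goulden--Jackson recurrence \eqref{eqn_Goulden_Jackson} with Stirling's formula; the lower bound is routine while the upper bound is delicate.

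\textbf{Lower bound.} Starting from $f((2+\eps)g, g)$, iterate the Goulden--Jackson recurrence exactly $g$ times, at each step discarding the nonnegative convolution sum. With $n_k = (2+\eps)g - 2k$, this yields
\[
f((2+\eps)g, g) \;\geq\; \left(\prod_{k=0}^{g-1} \frac{4(3n_k+2) n_k (3n_k-2)}{n_k+1}\right) f(\eps g, 0).
\]
Each factor is $\sim 36 n_k^2$; by Stirling the product evaluates to $(144/e^2)^g g^{2g} = (6/e)^{2g}(2g)^{2g}$ up to subexponential corrections (plus an additional factor $e^{O(\eps \log(1/\eps))g}$ from the offset between $n_k$ and $2(g-k)$, which only strengthens the lower bound). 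The base $f(\eps g, 0) \sim c(\eps g)^{-3/2}(12\sqrt{3})^{\eps g}$, by Tutte's classical asymptotic, is bounded below by $1$ and so harmless. After dividing by $3(2+\eps)g+2$ one obtains the lower bound; the edge case $\eps = 0$ follows identically by iterating down to $f(0,0) = 2$.

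\textbf{Upper bound on $\tau(2g,g)$.} The key observation is that the Goulden--Jackson recurrence for $R(g) := f(2g-1, g)$ contains \emph{no} convolution term: the constraints $n_1 + n_2 = 2g-3$, $g_1+g_2 = g$, $n_i \geq 2g_i - 1$ are incompatible. Hence
\[
R(g) = \frac{4(6g-1)(2g-1)(6g-5)}{2g} R(g-1), \qquad R(0) = 1/2,
\]
which by Stirling gives $R(g) \sim c_1 g^{-1/2}(144/e^2)^g g^{2g}$ in closed form. For $T(g) := f(2g,g)$, the convolution in the Goulden--Jackson recurrence reduces to $\sum_{g_1+g_2=g} R(g_1) R(g_2)$ and is dominated by its two boundary terms $2R(0)R(g) = R(g)$ (middle summands are of order $R(g)/g^2$). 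A routine induction then yields $T(g) \leq c_2 (144/e^2)^g g^{2g+O(1)}$, and therefore $\tau(2g,g) \leq e^{o(g)}(6/e)^{2g}(2g)^{2g}$.

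\textbf{Upper bound on $\tau((2+\eps)g, g)$.} Iterate the full Goulden--Jackson recurrence on $f((2+\eps)g, g)$, this time retaining the convolution. Writing $f(n_k, g-k) = A_{n_k} f(n_{k+1}, g-k-1)(1 + \gamma_k)$ with $A_{n_k}$ the main-term coefficient, the ratio $\gamma_k$ is controlled by inductively bounding each summand of the convolution through the near-max-genus estimate from the previous step: summands where one factor has nearly all the excess $\delta_i = n_i - 2g_i$ dominate, and these are controlled via the explicit $R(g_i)$. One obtains $\sum_k \gamma_k = O(\eps \log(1/\eps))$, and combining with the main-term product already computed for the lower bound gives
\[
\tau((2+\eps)g, g) \leq e^{h(\eps) g + o(g)}(6/e)^{2g}(2g)^{2g},
\]
with $h(\eps) = O(\eps \log(1/\eps)) \to 0$ as $\eps \to 0$.

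\textbf{The case $\eps_g \to 0$, and main obstacle.} The ``moreover'' part follows by combining the upper bound for fixed $\eps > 0$ with the monotonicity $\tau(n, g) \leq \tau(n+1, g)$, given by an edge-subdivision injection $\T(n,g) \hookrightarrow \T(n+1,g)$: for any fixed $\eps > 0$ and $g$ large enough that $\eps_g \leq \eps$, we have $\tau((2+\eps_g)g, g) \leq \tau((2+\eps)g, g) \leq e^{h(\eps)g + o(g)}(6/e)^{2g}(2g)^{2g}$, and letting $\eps \to 0$ concludes. The main technical obstacle is the third paragraph: a naive use of the bounded ratio lemma (Lemma \ref{ratio_lem}) gives only $h(\eps) = \Omega(\log(1/\eps))$, which does not tend to zero because the constant $C_{\eps'}$ blows up as $n \to 2g$. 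Avoiding this requires the careful convolution analysis, leveraging that near maximum genus the Goulden--Jackson convolution is dominated by boundary terms whose one factor is the explicit $R(g_i)$.
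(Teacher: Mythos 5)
Your lower bound is the same argument as the paper's (iterate Goulden--Jackson discarding the convolution, then Stirling), and your observation that the convolution term of Goulden--Jackson vanishes at maximum genus $n=2g-1$, giving a closed recursion for $R(g)=f(2g-1,g)$, is correct and nice. However, the upper bound for general $\eps>0$ — which is the actual content of the proposition — takes a genuinely different route from the paper, and it is precisely at that step that your argument has a serious gap.

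The paper's upper bound does \emph{not} iterate the Goulden--Jackson recursion. It instead bounds $\tau((2+\eps)g,g)$ by the number of tree-rooted triangulations, applies the Mullin-type cutting operation to produce a precubic unicellular map of genus $g$ with $(6+4\eps)g+1$ edges, invokes Chapuy's exact count of such maps, and absorbs the $\mathrm{Catalan}(\eps g+1)\le 4^{\eps g}$ leaf-matchings into $e^{h(\eps)g}$. This is fully explicit and yields a concrete $h(\eps)$. Your third paragraph instead proposes to iterate the Goulden--Jackson recursion on $f((2+\eps)g-2k,g-k)$ while controlling the convolution via an inductive multiplicative-error estimate $\gamma_k$, and asserts without justification that $\sum_k\gamma_k=O(\eps\log(1/\eps))$. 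This claim is not established: the convolution at step $k$ is a double sum over genus and excess allocations, the "boundary-dominated" heuristic you invoke requires \emph{a priori} upper bounds on $f(n_i,g_i)$ for small-but-nonzero excess to rule out the balanced terms, and obtaining those is exactly the bootstrapping you are in the middle of. The authors explicitly note that extracting asymptotics from Goulden--Jackson when $g/n\to\theta>0$ has resisted all their attempts; your paragraph is an outline of an attack, not a proof, and is the single step where the real difficulty lies. As written, the upper bound is not proved.

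Two further issues. First, your derivation of the upper bound on $\tau(2g,g)$ is a side remark that is not needed and does not transfer to $\eps>0$ (the convolution for $f((2+\eps)g-2k,g-k)$ has many nonvanishing terms, unlike for $R(g)$). Second, for the "moreover" part you assert a monotonicity $\tau(n,g)\le\tau(n+1,g)$ via an "edge-subdivision injection", but subdividing an edge of a triangulation does not produce a triangulation, and no injection $\T(n,g)\hookrightarrow\T(n+1,g)$ is constructed or cited; this claim is unsubstantiated. The paper avoids it entirely: the unicellular-map count applies verbatim with $\eps$ replaced by $\eps_g\to 0$, yielding the claim directly.
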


We delay the proof of Proposition \ref{prop_near_diagonal}, and first explain how to finish the proof of Theorem \ref{thm_asympto}.

\begin{proof}[Proof of Theorem \ref{thm_asympto}]
We first note that $f(1/2)=\frac{6}{e}$, so the case $\theta=1/2$ is exactly the second part of Proposition~\ref{prop_near_diagonal}. We now assume $0 \leq \theta <1/2$. Let $\eps>0$ be such that $\theta<\frac{1}{2+\eps}$. We estimate the first factor of \eqref{eqn_decomposition_taung}. We write
\[ \frac{1}{n} \log\left(\frac{\tau(n,g_n)}{\tau((2+\eps)g_n,g_n)}\right) = \frac{1}{n} \sum_{i=(2+\eps)g_n}^n \log\left(\frac{\tau(i,g_n)}{\tau(i-1,g_n)}\right).\]
By Lemma~\ref{prop_rapport_voisins}, when $\frac{g_n}{i} \to \frac{1}{t}$, so $\log\frac{\tau(i,g_n)}{\tau(i-1,g_n)} \to \log \frac{1}{\lambda(1/t)}$. Moreover, by the bounded ratio lemma (Lemma~\ref{ratio_lem}), each of the terms is bounded by some constant $C_{\eps}$. Hence, by dominated convergence, we have
\begin{equation}\label{eqn_telescopic_product}
\frac{1}{g_n} \log\left(\frac{\tau(n,g_n)}{\tau((2+\eps)g_n,g_n)}\right) \xrightarrow[n \to +\infty]{} \int_{(2+\eps)}^{1/\theta} \log \frac{1}{\lambda(1/t)} \mathrm{d}t.
\end{equation}
On the other hand, if we replace $g$ by $g_n$ with $\frac{g_n}{n} \to \theta$, then the left-hand side of Proposition~\ref{prop_near_diagonal} becomes
$n^{2g_n} \exp \left( \left( 2\theta \log \frac{12\theta}{e} \right) n +o(n) \right)$,
so Proposition~\ref{prop_near_diagonal} gives
\[  \left( 2\theta \log \frac{12\theta}{e} \right) n +o(n) \leq \log \frac{\tau((2+\eps)g_n,g_n)}{n^{2g_n}} \leq \left( 2\theta \log \frac{12\theta}{e} + h(\eps) \right) n +o(n), \]
where $h(\eps) \to 0$ as $\eps \to 0$. By combining this with \eqref{eqn_decomposition_taung} and \eqref{eqn_telescopic_product} and finally letting $\eps \to 0$, we get the result.
\end{proof}

We finally prove Proposition \ref{prop_near_diagonal}. The lower bound can be deduced easily from the Goulden--Jackson formula. For the upper bound, we will bound crudely the number of triangulations by the number of tree-rooted triangulations, i.e. triangulations decorated with a spanning tree. These can be counted by adapting classical operations going back to \cite{Mullin67} in the planar case.

\begin{proof}[Proof of Proposition \ref{prop_near_diagonal}]
We start with the lower bound. The Goulden--Jackson formula \eqref{eqn_Goulden_Jackson} implies the following (crude) inequality:
\[\tau(n,g)\geq (36+o(1))n^2 \tau(n-2,g-1),\]
where the $o$ is uniform in $g$, when $n \to +\infty$. By an easy induction, we have
\[\tau((2+\eps)g,g)\geq (36+o(1))^{g}\frac{((2+\eps)g)!}{(\eps g)!}\tau(\eps g,0) \geq (36+o(1))^{g}\frac{((2+\eps)g)!}{(\eps g)!},\]
and the lower bound follows from the Stirling formula.

For the upper bound, we adapt a classical argument about tree-rooted maps. We denote by $\widetilde{\T}((2+\eps)g,g)$ the set of triangulations of genus $g$ with $2(2+\eps)g$ faces and a distinguished spanning tree, and by $\widetilde{\tau}((2+\eps)g,g)$ its cardinal. We recall that such triangulations have $\eps g+2$ vertices, so the spanning tree has $\eps g+1$ edges.

If $t \in \widetilde{\T}((2+\eps)g,g)$, we consider the dual cubic map of $t$ and "cut in two" the edges that are crossed by the spanning tree, as on Figure \ref{fig_mating_tree}. The map that we obtain is unicellular (i.e. it has one face), and we denote it by $U(t)$. Moreover, it is \emph{precubic} (i.e. its vertices have only degree $1$ or $3$), has genus $g$ and $(6+4\eps)g+1$ edges (the number of edges of the original triangulation, plus one for each edge of the spanning tree).
\begin{figure}
\center
\includegraphics[scale=0.8]{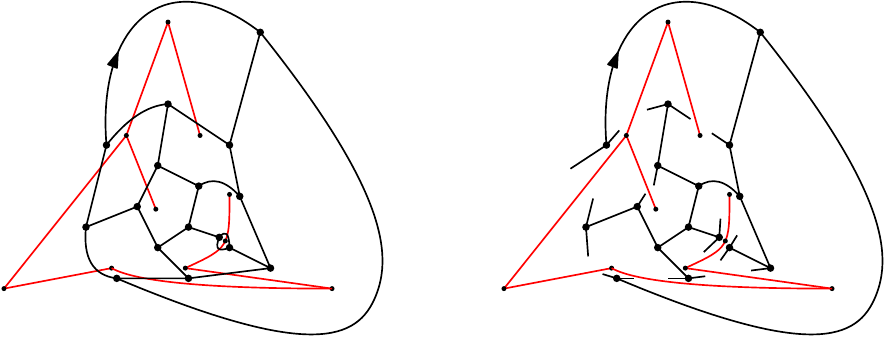}
\caption{An example of the cutting operation on a cubic map. The distinguished spanning tree is in red. We picked a planar example but in the general case the "opened" map on the right is unicellular.}
\label{fig_mating_tree}
\end{figure}
The number of precubic unicellular maps with fixed genus and number of edges is computed exactly in \cite[Corollary 7]{Ch11} and is equal to
\[ \frac{2 ((6+4\eps)g+1)!}{12^g g! (2\eps g+2)! ((3+2\eps)g)!} = e^{h_U(\eps)g+o(g)} \left( \frac{6}{e} \right)^{2g} (2g)^{2g},\]
where
\[h_U(\eps)=2\eps \log \frac{6}{\eps} + (3+2\eps) \log \left( 1+\frac{2 \eps}{3}\right) \xrightarrow[\eps \to 0]{}0 \]
by the Stirling formula. Finally, to go back from $U(t)$ to $t$, we also need to remember how to match the leaves two by two in the face of $U(t)$ without any crossing. The number of ways to do so is $\mathrm{Catalan}(\eps g+1)\leq 4^{\eps g}$, so
\[ \tau((2+\eps)g,g) \leq \widetilde{\tau}((2+\eps)g,g) \leq 4^{\eps g} e^{h_U(\eps)g+o(g)} \left( \frac{6}{e} \right)^{2g} (2g)^{2g},\]
which is enough to conclude. The proof for the second part of the proposition is exactly the same, but where we replace $\eps$ by $\eps_g \to 0$.
\end{proof}

\begin{rem}
Bounding the number of triangulations by the number of tree-rooted triangulations may seem very crude. The reason why this is sufficient is that the spanning trees have only $\eps g+1$ edges, so the number of spanning trees of a triangulation can be bounded by $\binom{3(2+\eps)g}{\eps g+1}$, which is of the form $e^{h(\eps) g+o(g)}$ with $h(\eps) \to 0$ as $\eps \to 0$.
\end{rem}

\bibliographystyle{abbrv}
\bibliography{bibli}

\end{document}